\newtheorem{thm}{Theorem}[section]
\newtheorem{cor}[thm]{Corollary}
\newtheorem{lem}[thm]{Lemma}
\newtheorem{prop}[thm]{Proposition}
\newenvironment{pf*}[1]{\proof[#1]}{\endproof}
\theoremstyle{definition}
\theoremstyle{remark}
\newcommand{\RR}{{\mathbb R}}
\newcommand{\ZZ}{{\mathbb Z}}
\newcommand{\NN}{{\mathbb N}}
\newcommand{\DD}{{\mathbb D}}
\newcommand{\QQ}{{\mathbb Q}}
\newcommand{\cal}[1]{{\mathcal #1}}
\newcommand{\cW}{{\cal W}}
\newcommand{\cF}{{\cal F}}
\newcommand{\cB}{{\cal B}}
\numberwithin{equation}{section}
\begin{document}


\title{Computability of Brjuno-like functions}
\date{January 7, 2025}
\author{Ivan O. Shevchenko and Michael Yampolsky}
\begin{abstract}
In his seminal paper from 1936, Alan Turing introduced the concept of non-computable real numbers and presented examples based on the algorithmically unsolvable Halting problem. We describe a different, analytically natural mechanism for the appearance of non-computability. Namely, we show that additive sampling of orbits of certain skew products over expanding dynamics produces Turing non-computable reals. We apply this framework to Brjuno-type functions to demonstrate that they realize bijections between computable and lower-computable numbers, generalizing previous results of M.~Braverman and the second author for the Yoccoz-Brjuno function to a wide class of examples, including Wilton's functions and generalized Brjuno functions. 

\end{abstract}
\thanks{I.S. was partially supported by NSERC USRA fellowship. M.Y. was partially supported by NSERC Discovery grant}
\maketitle

\section{Introduction}

In 1936, Alan Turing published a seminal paper \cite{Turing} which is rightly considered foundational for modern computer science. The main subject of his paper, as seen from the title, was a concept of a computable real number. Informally, such numbers can be approximated to an arbitrary desired precision using some algorithm. Turing formalized the latter as a Turing Machine (TM) which is now the commonly accepted theoretical model of computation. Appearing before actual computers,  TMs can be somewhat cumbersome to describe, but their computational power is equivalent to those of  programs in a modern programming language, such as {\sl Python}, for instance. Let  $$\DD=\{p2^q, \;p,q\in\ZZ\}$$ denote the set of dyadic rationals.  A number $x\in\RR$ is {\it computable} if there exists a TM $M$, which has a single input $n\in\NN$ and which outputs $d_n\in\DD$ such that
$$|x-d_n|<2^{-n}.$$
The dyadic notation here is purely to support the intuition that modern computers operate in binary; replacing $2^{-n}$ with any other constructive bound, such as, for instance, $n^{-1}$ would result in an equivalent definition.

Since there are only countably many programs in {\sl Python}, there are only countably many computable numbers. Yet, it is surprisingly non-trivial to present an example of one. To this end, Turing introduced an algorithmically unsolvable problem, now known as the Halting Problem: {\sl determine algorithmically whether a given TM halts or runs forever}. Turing showed that there does not exist a program $M$ whose input is another program $M_1$ and whose output is $1$ if $M_1$ halts, and $0$ if $M_1$ does not halt.

From this, a non-computable real is constructed as follows. Let us enumerate all programs $$M_1,M_2,...,M_n,\ldots$$ in some explicit way. For instance, all possible finite combinations of symbols in the {\sl Python} alphabet can be listed in the lexicographic order. Some of these programs will not run at all (and thus halt by definition) but some will run without ever halting. Let the {\it halting predicate} $p(n)$ be equal to $1$ if $M_n$ halts and $0$ otherwise.

Now, set $$\alpha=\sum_{n\geq 1}p(n)3^{-n}.$$
This number is clearly non-computable -- an algorithm computing it could be used to determine the value of the halting predicate for every given $n$, and thus cannot exist.

It is worth noting a further property of $\alpha$. Let us say that $x\in\RR$ is {\it left-computable } if there exists a TM $M$ which outputs an increasing sequence $$a_n\nearrow x.$$ There are, again, only countably many left-computable numbers, and it is trivial to see that computable numbers form their subset (a proper subset, as seen below). 
Right computability is defined in the same way with decreasing sequences, and it is a nice exercise to show that being simultaneously left- and right-computable is equivalent to being computable.

The non-computable number $\alpha$ is left-computable, as it is the limit of
$$a_n=\underset{j\leq n \;|\; M_j\text{ halts in at most }n\text{ steps }}{\sum}3^{-j};$$
which can be generated by a program which emulates  $M_1,\ldots,M_k$ for $k$ steps to produce $a_k$.

Since Turing, examples of non-computable reals were typically constructed along similar lines. In 2000's, working on problems of computability in dynamics, M.~Braverman and the second author discovered that non-computability may be produced via an analytic expression \cite{BY09}. This expression came in the form of the {\it Brjuno function} 
$\cB(\theta)$ which was introduced by J.-C.~Yoccoz \cite{Yoccoz1996} to study linearization problems of irrationally indifferent dynamics. We will discuss $\cB(\theta)$ and its various cousins in detail below, but let us give an explicit formula here. For $\theta\in(0,1)\setminus\QQ$, we have
$$\cB(\theta)=\sum_{n\geq 0}\theta_{-1}\theta_0\theta_1\cdots\theta_{n-1}\log\frac{1}{\theta_n}.$$
Here, $\theta_{-1}=1$, $\theta_0\equiv\theta$, and $\theta_{i+1}$ is obtained from $\theta_i$ by applying the Gauss map $x\mapsto\{1/x\}$. Of course, for rational values of $\theta$ the summand will eventually turn infinite. The sum may also diverge for an irrational $\theta$, and yet can be shown to converge to a finite value for almost all values in $(0,1)$.

It is evident that if $\theta$ is computable then $\cB(\theta)$ is left-computable. Very surprisingly, this can be reversed in the following way. Setting $$y_*\equiv\inf\{\cB(\theta),\;\theta\in(0,1)\},$$ for each left-computable
$y\in[y_*,\infty),$
  there exists a computable $\theta\in(0,1)$ with $\cB(\theta)=y$. Moreover, there is an algorithm which, given a sequence $a_n\nearrow y$, computes such $\theta$. 
  The Brjuno function can be seen as a ``machine'' mapping computable values in $(0,1)$ surjectively onto left-computable values in $[y_*,\infty)$. As we have seen with the example of Turing's $\alpha$, left-computable numbers may be non-computable. Thus, $\cB(\theta)$ describes a natural analytic mechanism for producing non-computable reals.

    The purpose of this paper is as follows. We distill the proof of the above result from \cite{BY09}, where it is somewhat hidden in the considerations of complex dynamics and Julia sets. Moreover, we generalize the result to cover other Brjuno-type functions which have previously appeared in the mathematical literature, some of them 60 years before Yoccoz's work and in a completely different context. Our generalization describes the phenomenon in the language of dynamical systems. As we will see, ergodic sampling of a particular type of dynamics with suitable weights leads to non-computability.

    Finally, for an even broader natural class of functions, which have also previously been studied, and which do not quite fit the above framework, we prove a more general, albeit weaker, non-computability result.
		
		\subsection*{Acknowledgement} The authors would like to thank Stefano Marmi for sharing his insights into Brjuno-like functions.

    \section{Preliminaries}\label{section:Preliminaries}

    \subsection{Computable functions}
    The ``modern'' definition of a computable function requires the concept of an {\it oracle}. Loosely speaking, an oracle for a real number $x$, for example, is a user who {\it knows $x$} and, when queried by a TM, can input its value with any desired precision. In the world of Turing Machines (as well as {\sl Python} programs) an oracle may be conceived as an infinite tape on which an infinite string of dyadic rationals is written (encoding, for instance, a Cauchy sequence for $x\in\RR$) and which the program is able to read at will. Of course, only a finite amount of information can be read off this tape each time. As well as a real number, an oracle can be used to encode anything else which could be written on an infinite tape, for instance, the magically obtained solution to the Halting Problem.

    Formally, an oracle is  a function $\phi:\NN\to\DD$. An {\it oracle for $x\in\RR$} satisfies
\begin{equation*}
    |\phi(n) - x| < 2^{-n} \text{\hspace{10px} for all \hspace{10px}} n \in \mathbb{N}.
\end{equation*}
We say a TM $M^{\phi}$ is an \textit{oracle Turing Machine} if at any step of the computation, $M^{\phi}$ can query the value $\phi(n)$ for any $n$. We treat an oracle TM as a function of the oracle; that is, we think of $\phi$ in $M^{\phi}$ as a placeholder for any oracle, and the TM performs its computational steps depending on the particular oracle it is given. We will talk about ``querying the oracle'', ``being given the access to an oracle for $x$'', or just ``given $x$''.

We need oracle TMs to define computable functions on the reals. For $S\subset \RR$, we say that {\it a function $f:S\to\RR$ is computable} if there exists an oracle TM $M^\phi$ with a single input $n$ such that for any $x\in S$ the following is true. If $\phi$ is an oracle for $x$, then upon input $n$, the machine $M^\phi$ outputs $d_n\in\DD$ such that
$$|f(x)-d_n|<2^{-n}.$$
In other words, there is an algorithm which can output the value of $f(x)$ with any desired precision if it is allowed to query the value of $x$ with an arbitrary finite precision.

The domain of the real-valued function plays an important role in the above definition. The definition states that there is a single algorithm which, given $x$, works for every $x \in S$. We will abbreviate this by saying that $f$ is {\it uniformly computable on $S$}.
The weakest computability result and the strongest non-computability result one can obtain in regards to real-valued functions is then when the domain is restricted to a single point.

It is worth making note of the following easy fact, whose proof we leave as an exercise:
\begin{prop}
  \label{prop:computability_continuity}
If $f$ is uniformly computable on $S$ then $f$ is continuous on $S$.
  \end{prop}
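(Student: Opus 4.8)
The plan is to extract continuity directly from the uniform computability of $f$ on $S$, using the fact that the oracle TM $M^\phi$ computing $f(x)$ to precision $2^{-n}$ can only consult finitely many values of the oracle $\phi$ before halting. Fix $x_0\in S$ and $\eps>0$, and pick $n$ with $2^{-n+1}<\eps$. Run $M^\phi$ on input $n$ with $\phi$ being some fixed oracle for $x_0$. Since the computation halts after finitely many steps, it queries $\phi$ at only finitely many arguments $m_1,\dots,m_k$; let $N=\max_i m_i$. The key point is that $M^\phi$ on input $n$ behaves \emph{identically} for any oracle $\psi$ that happens to agree with $\phi$ on all of $m_1,\dots,m_k$ — the machine cannot tell the two oracles apart, so it produces the same output $d_n$.

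The next step is to choose $\delta>0$ small enough that every $y\in S$ with $|y-x_0|<\delta$ admits an oracle $\psi$ agreeing with $\phi$ on $m_1,\dots,m_k$. Concretely, if $|y-x_0|<\delta$ and $\delta$ is small compared to $2^{-N}$ (say $\delta<2^{-N-1}$), then for each $m\le N$ the dyadic value $\phi(m)$, which satisfies $|\phi(m)-x_0|<2^{-m}$, also satisfies $|\phi(m)-y|<2^{-m}$ provided the slack $2^{-m}-|\phi(m)-x_0|$ exceeds $\delta$; this may fail for the finitely many $m$ where $\phi(m)$ sits near the boundary of its allowed interval, but one can sidestep this by first perturbing the chosen oracle for $x_0$ so that $|\phi(m)-x_0|<2^{-m}/2$ for all $m\le N$, which costs nothing since we are free to pick any oracle for $x_0$. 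With that choice, $\delta=2^{-N-1}$ works: any oracle $\psi$ for such a $y$ can be taken to equal $\phi$ on $\{0,\dots,N\}$ and be arbitrary (but valid) elsewhere.

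Then $M^\psi$ on input $n$ outputs the same $d_n$ as $M^\phi$ did, so
$$|f(y)-f(x_0)|\le |f(y)-d_n|+|d_n-f(x_0)|<2^{-n}+2^{-n}=2^{-n+1}<\eps.$$
This holds for every $y\in S$ with $|y-x_0|<\delta$, which is exactly continuity of $f$ at $x_0$; since $x_0\in S$ was arbitrary, $f$ is continuous on $S$.

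The only subtlety — and the step I would be most careful about — is the construction of a valid oracle $\psi$ for a nearby point $y$ that is forced to agree with our reference oracle for $x_0$ on a prescribed finite set: one must make sure both that such $\psi$ exists (handled by the preliminary rescaling of the slack, as above) and that the TM's behavior genuinely depends only on the queried values and not on any ``side channel'' — but this is immediate from the definition of an oracle TM, whose transitions depend on the oracle solely through the values it reads. Everything else is bookkeeping with the $2^{-n}$ bounds.
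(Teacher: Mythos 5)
The paper leaves Proposition~\ref{prop:computability_continuity} as an exercise and gives no proof of its own, so there is nothing to compare against; but your argument is the standard one (the ``use principle'' for oracle machines) and is correct. One small wrinkle in the order of quantifiers: you first determine $N$ by running $M^\phi(n)$ and then propose to ``perturb'' $\phi$ so that $|\phi(m)-x_0|<2^{-m}/2$ for $m\le N$; but changing $\phi$ can change the set of queries and hence $N$ itself. The clean way to avoid this circularity is to fix, from the very start, an oracle $\phi$ for $x_0$ satisfying the tighter bound $|\phi(m)-x_0|<2^{-m-1}$ for \emph{all} $m$ (e.g.\ truncate the binary expansion of $x_0$), then run $M^\phi(n)$ to determine $N$, and set $\delta=2^{-N-1}$. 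With that reordering your estimate $|\phi(m)-y|\le|\phi(m)-x_0|+|x_0-y|<2^{-m-1}+2^{-N-1}\le 2^{-m}$ for $m\le N$ goes through exactly as written, the agreement of $M^\phi$ and $M^\psi$ on input $n$ is immediate from the deterministic dependence of an oracle TM on the queried values, and the triangle-inequality finish is correct.
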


Uniform left- or right- computability of functions is defined in a completely analogous way.

\subsection{Brjuno function and friends}
Every irrational number $\theta$ in the unit interval admits a unique (simple) continued fraction expansion:
\begin{equation*}
    [a_{1}, a_{2}, a_{3}, \ldots] \equiv \cfrac{1}{a_{1}+\cfrac{1}{a_{2}+\cfrac{1}{a_{3}+\ldots}}} \in (0,1)\setminus\mathbb{Q},
\end{equation*}
where $a_{i} \in \NN$. An important related concept is that of the \textit{Gauss map} $G:(0,1]\to [0,1]$ given by 
\begin{equation*}
    G(x) = \left\{\frac{1}{x}\right\};
\end{equation*}
it has the property $$G([a_{1}, a_{2}, a_{3}, \ldots]) = [a_{2}, a_{3}, a_{4}, \ldots].$$

In what follows, for a function $F$, we denote $F^{n}$ its $n$-th iterate.
For ease of notation, for each $j\in\NN$ let us define the function
$\eta_{j}: (0, 1) \setminus\mathbb{Q} \to (0, 1) \setminus\mathbb{Q}$ as 
$ \eta_{j}(x) = G^{j-1}(x)$,
so that $$\eta_{j}([a_{1}, a_{2}, a_{3}, \ldots]) = [a_{j}, a_{j+1}, a_{j+2}, \ldots].$$ We define \textit{Yoccoz's Brjuno function}, or for brevity just the \textit{Brjuno function} \cite{Yoccoz1996}  by
\begin{equation} \label{eq:YoccozBrjuno}
    \cB(x) = \sum_{i=1}^{\infty} \eta_{0}(x) \eta_{1}(x) \cdots \eta_{i-1}(x) \cdot (-\log(\eta_{i}(x))),
\end{equation}

where we set $\eta_{0}(x) = 1$ for all $x$. Irrationals in $(0,1)$ for which $\cB(x)<+\infty$ are known as {\it Brjuno numbers}; they form a full measure subest of $(0,1)$. 
The original work of Brjuno \cite{Brjuno1971} characterized Brjuno numbers using a different infinite series, whose convergence is equivalent to that of (\ref{eq:YoccozBrjuno}).

The Brjuno condition has been introduced in the study of linearization of neutral fixed points. The function $\cB$ has an important geometric meaning in this context as an estimate on the size of the domain of definition of a linearizing coordinate. It
has a number of remarkable properties, and has been studied extensively, see for instance \cite{MMY}.

Intuitively, the condition $\cB(x)<+\infty$ is a Diophantine-type condition; if $x$ is a Diophantine number then it can be shown that the series (\ref{eq:YoccozBrjuno}) is majorized by a geometric series.
As we have learned from a talk by S.~Marmi \cite{talkMarmi}, similar expressions have appeared much earlier in the theory of Diophantine approximation. Notably, in 1933 Wilton \cite{Wilton1933} defined the sums
\begin{align}
    \cW_{1}(x) &= \sum_{i=1}^{\infty} \eta_{0}(x) \eta_{1}(x) \cdots \eta_{i-1}(x) \cdot (-\log^{2}(\eta_{i}(x))), \label{eq:Wilton1} \\
    \cW_{2}(x) &= \sum_{i=1}^{\infty} (-1)^{i+1} \cdot \eta_{0}(x) \eta_{1}(x) \cdots \eta_{i-1}(x) \cdot (-\log(\eta_{i}(x))) \label{eq:Wilton2}
\end{align}
which we will call {\it the first and second Wilton functions} respectively.

To illustrate, how different the application of Wilton's functions is from the Brjuno function, let us quote Wilton's results. If we denote $d(n)$ to be the number of divisors of a positive integer $n$, Wilton showed that
\begin{align*}
    \sum_{n=1}^{\infty} \dfrac{d(n)}{n} \cos 2\pi nx < \infty &\text{\hspace{10px} if and only if \hspace{10px}} \cW_{1}(x) < \infty, \text{\hspace{5px} and} \\
    \sum_{n=1}^{\infty} \dfrac{d(n)}{n} \sin 2\pi nx < \infty &\text{\hspace{10px} if and only if \hspace{10px}} \cW_{2}(x) < \infty.
\end{align*}

One important generalization of the simple continued fraction expansion of an irrational number is the \textit{$\alpha$-continued fraction expansion} for $\alpha \in [1/2, 1]$. Let $A_{\alpha} : [0, \alpha] \to [0, \alpha]$ be the map
\begin{equation*}
    A_{\alpha}(0) = 0, \text{\hspace{10px}} A_{\alpha}(s) = \left| \dfrac{1}{x} - \left\lfloor \dfrac{1}{x} - \alpha + 1 \right\rfloor  \right|, x \neq 0.
\end{equation*}
By iterating this mapping, we define the infinite $\alpha$-continued fraction expansion for any $x \in (0, \alpha) - \mathbb{Q}$ as follows. For $n \geq 0$ we let
\begin{align*}
    & x_{0} = |x - \lfloor x - \alpha + 1 \rfloor|, \text{\hspace{31px}} a_{0} = \lfloor x - \alpha + 1  \rfloor, \text{\hspace{60px}} \varepsilon_{0} = 1,\\
    & x_{n+1} =  A_{\alpha}(x_{n}) = A_{\alpha}^{n+1}(x), \text{\hspace{15px}} a_{n+1} = \left\lfloor \dfrac{1}{x_{n}} - \alpha + 1 \right\rfloor \geq 1, \text{\hspace{15px}} \varepsilon_{n+1} = \text{sgn}(x_{n}).
\end{align*}
Then we can write
\begin{equation*}
    x = [(a_{1}, \varepsilon_{1}), (a_{2}, \varepsilon_{2}), \ldots, (a_{n}, \varepsilon_{n}), \ldots] := \dfrac{\varepsilon_{0}}{a_{1}+\dfrac{\varepsilon_{1}}{a_{2}+\dfrac{\varepsilon_{2}}{a_{3}+\ldots}}} \in (0,\alpha)-\mathbb{Q}.
\end{equation*}
Note that when $\alpha=1$, we recover the standard continued fraction expansion.

Generalizations of the Brjuno function based on the above expansions have been studied, for example, in \cite{Luzzi2010}. There the authors considered the properties of the function
\begin{equation} \label{eq:B_au}
    \cB_{\alpha, u}(x) = \sum_{i=1}^{\infty} \eta_{\alpha,0}(x) \eta_{\alpha,1}(x) \cdots \eta_{\alpha,i-1}(x) \cdot u(\eta_{\alpha,i}(x)),
\end{equation}
where $\eta_{\alpha,0}(x) = 1$, $\eta_{\alpha, j}(x) = A_{\alpha}^{j-1}(x), \alpha \in [1/2, 1], j \geq 1$ is a generalization of $\eta_{j}$ to alpha-continued fraction maps, and $u:(0, 1) \to \mathbb{R}^{+}$ is a $\mathcal{C}^{1}$ function such that
\begin{equation*}
    \lim_{x\to0^{+}}u(x) = \infty, \text{\hspace{10px}} \lim_{x\to0^{+}}x \cdot u(x) < \infty, \text{\hspace{10px}} \lim_{x\to0^{+}}x^{2} \cdot u'(x) < \infty.
\end{equation*}
A further generalized class $\{\cB_{\alpha, u, \nu}\}$ of Brjuno functions is discussed in \cite{Bakhtawar2024}
where the last two conditions above are dropped and the term $\eta_{\alpha,1}(x) \eta_{\alpha,2}(x) \cdots \eta_{\alpha,i-1}(x)$ is raised to some power $\nu \in \mathbb{Z}^{+}$:
\begin{equation} \label{eq:B_auv}
    \cB_{\alpha, u, \nu}(x) = \sum_{i=1}^{\infty} (\eta_{\alpha,0}(x) \eta_{\alpha,1}(x) \cdots \eta_{\alpha,i-1}(x))^{\nu} \cdot u(\eta_{\alpha,i}(x)).
\end{equation}
As shown in \cite{Bakhtawar2024}:
\begin{prop}\label{prop:Brjuno-max}
  For all $\alpha \in \mathbb{Q} \cap [1/2, 1]$, such functions $\cB_{\alpha, u, \nu}$ are lower semi-continuous and thus attain their global minima.
\end{prop}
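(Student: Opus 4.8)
The plan is to realise $\cB_{\alpha,u,\nu}$ as an increasing limit of continuous functions, deduce lower semi-continuity from that, and then force the global minimum to be attained by showing the function blows up along the (countable) ``boundary'' of its domain inside $[0,\alpha]$.

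Since $u>0$ and each $\eta_{\alpha,j}$ takes values in $[0,\alpha]$, every term of the series (\ref{eq:B_auv}) is non-negative, so the partial sums
$$f_N(x)=\sum_{i=1}^{N}\bigl(\eta_{\alpha,0}(x)\eta_{\alpha,1}(x)\cdots\eta_{\alpha,i-1}(x)\bigr)^{\nu}\,u(\eta_{\alpha,i}(x))$$
increase pointwise to $\cB_{\alpha,u,\nu}$. A pointwise supremum of lower semi-continuous functions is lower semi-continuous, so it suffices to show that each $f_N$ is continuous on $D:=(0,\alpha)\setminus\QQ$. This is the one place where the hypothesis $\alpha\in\QQ$ is essential: the only discontinuities of $A_\alpha$ occur at the points $1/(m+\alpha-1)$, which are rational precisely because $\alpha$ is, whereas $A_\alpha$ sends any irrational $x$ to $|1/x-n|$ with $n\in\ZZ$, hence to an irrational. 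Thus the forward $A_\alpha$-orbit of any $x\in D$ stays in $D$ and avoids the discontinuity set, so each $\eta_{\alpha,j}=A_\alpha^{j-1}$ is continuous on $D$ (a finite composition of maps, each continuous at the relevant orbit point); since $\eta_{\alpha,i}(x)\in(0,1)$ for $x\in D$ and $u$ is $\mathcal{C}^{1}$ on $(0,1)$, the finite sum $f_N$ is continuous on $D$. Hence $\cB_{\alpha,u,\nu}$ is lower semi-continuous on $D$.

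Next I would show that $\cB_{\alpha,u,\nu}(y)\to+\infty$ as $y\in D$ tends to any point $q\in[0,\alpha]\setminus D=\{0,\alpha\}\cup\bigl(\QQ\cap(0,\alpha)\bigr)$. For $q=0$ this is immediate, since $\eta_{\alpha,1}(y)=y$ and the first term alone equals $u(y)\to+\infty$. For rational $q\in(0,\alpha]$ one uses the finite Markov structure of $A_\alpha$ for rational $\alpha$: $A_\alpha$ maps a rational $p/q$ to $|q-np|/p$, a rational of strictly smaller denominator, and the finitely many ``critical'' values $1-\alpha$, $\alpha$ produced at the discontinuities of $A_\alpha$ are themselves absorbed into $0$ after finitely many steps. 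Consequently, following either one-sided germ of the $A_\alpha$-orbit at $q$, there is a first index $i^\ast$ with $\eta_{\alpha,i^\ast}(y)\to 0$, while $\eta_{\alpha,j}(y)$ tends to a strictly positive limit for every $j<i^\ast$; the $i^\ast$-th term of (\ref{eq:B_auv}) is then asymptotic to a positive constant times $u(\eta_{\alpha,i^\ast}(y))\to+\infty$, and the claim follows.

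Finally, extend $\cB_{\alpha,u,\nu}$ to a function $\widehat{\cB}$ on the compact interval $[0,\alpha]$ by setting $\widehat{\cB}\equiv+\infty$ on the countable set $[0,\alpha]\setminus D$. Lower semi-continuity of $\widehat{\cB}$ at a point of $D$ follows from the second paragraph together with $\widehat{\cB}=+\infty$ off $D$, and lower semi-continuity at a point of $[0,\alpha]\setminus D$ is exactly the blow-up of the third paragraph; so $\widehat{\cB}$ is lower semi-continuous on the compact set $[0,\alpha]$ and therefore attains its minimum there. If $\cB_{\alpha,u,\nu}$ is finite somewhere, that minimum is finite, hence attained off the set $[0,\alpha]\setminus D$ where $\widehat{\cB}=+\infty$, i.e. at a point of $D$; and if $\cB_{\alpha,u,\nu}\equiv+\infty$ then every point of $D$ is a minimizer. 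Either way $\cB_{\alpha,u,\nu}$ attains its global minimum. The step I expect to require real work is the third paragraph — verifying that $\cB_{\alpha,u,\nu}$ genuinely diverges near every rational — which rests on the finiteness of the $\alpha$-continued fraction algorithm on the rationals and demands some care in tracking one-sided orbits through the discontinuities of $A_\alpha$, since the critical values $1-\alpha$ and $\alpha$ may recur before the orbit lands at $0$.
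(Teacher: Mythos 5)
The paper does not prove Proposition~\ref{prop:Brjuno-max}; it quotes the result from \cite{Bakhtawar2024} without argument, so there is no in-paper proof to compare against. Your overall strategy is the right one, and paragraph two (lower semi-continuity via continuous partial sums, using that rational $\alpha$ keeps the $A_\alpha$-orbit of an irrational away from the rational branch endpoints) and paragraph four (compactification by $+\infty$ on the rationals and the finite/infinite case split) are correct conditional on the divergence claim of paragraph three, and that is exactly where the genuine gap lies, as you anticipate. The observation that $A_\alpha$ sends $p/q$ to $|q-np|/p$ with strictly smaller denominator shows that the literal $A_\alpha$-orbit of a rational hits $0$ in finitely many steps, but the blow-up argument needs the stronger statement that the \emph{one-sided germ} of the orbit does---the sequence of one-sided limits of $\eta_{\alpha,i}(y)$ as $y\to q^{\pm}$. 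These two sequences agree only while the orbit of $q$ stays interior to the branches of $A_\alpha$; the first time it lands on a branch endpoint, the one-sided limit from one side jumps to $\alpha$ or $1-\alpha$, whose reduced denominator bears no relation to the one you were descending from, so the monovariant breaks, and one must close off the possible recursion through $\alpha$ and $1-\alpha$ by a separate finite analysis that you only gesture at.

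A clean way to avoid the germ-tracking entirely is to compactify in symbolic rather than real coordinates. Take a minimizing sequence $x_n$, view each $x_n$ as a code in $(\ZZ_{>0})^{\NN}$, and extract a subsequential limit $a^\ast$ in the compact space $(\ZZ_{>0}\cup\{\infty\})^{\NN}$. If $a^\ast$ has all finite entries, the corresponding subsequence of $x_n$ converges to an irrational of $(0,\alpha)$ where your lower semi-continuity applies. If instead $a^\ast_j=\infty$ at some smallest position $j$, then for $i<j$ the points $\eta_{\alpha,i}(x_n)$ are eventually confined to the fixed branch indexed by $a^\ast_i$ and hence bounded away from $0$, while $\eta_{\alpha,j}(x_n)\to 0$; thus the $j$-th term of the series---and, by non-negativity of all terms, $\cB_{\alpha,u,\nu}(x_n)$ itself---tends to $+\infty$, which is impossible for a minimizing sequence unless the infimum is already $+\infty$. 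The discontinuities of $A_\alpha$ never appear because in symbolic coordinates the dynamics is the continuous shift.
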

  Note that if we take $\alpha=1$, $\nu=1$, and $u = -\log$, we recover Yoccoz's Brjuno function discussed above; similarly, taking $\alpha=1$, $\nu=1$, and $u = -\log^{2}$ recovers the first Wilton function $\cW_{1}$. Note, however, that we cannot obtain $\cW_{2}$ as a special case of $\cB_{\alpha, u, \nu}$.

  \section{Statements of the results}\label{section:Statements}
  \subsection{A general framework}
  We refer the readers to the survey \cite{surveyBrjuno} which discusses a cohomological interpretation of the Brjuno function and lays the groundwork for its generalizations.
Our discussion will be much less technical, yet essentially equivalent in the cases we consider. It will yield a generalization which is 
(a) broad enough to include the relevant examples we have quoted and (b) captures the essence of the non-computability phenomenon discovered in  \cite{BY09}.  This is achieved via the following framework. Suppose $G(x)$ is a piecewise-defined expanding mapping whose domain is an infinite collection of subintervals of $(0,1)$ each of which is mapped surjectively over all of $(0,1)$. Let $\psi:(0,1)\to\RR$ (in our case, $\psi(x)=x^\nu$), and consider the skew product dynamics
given by
\begin{equation}\label{eq:skewprod}
F:\left(\begin{array}{c}x\\y\end{array}\right)=\left(\begin{array}{c}G(x)\\\psi(x) y\end{array}\right)
  \end{equation}
The class of functions $\cF$ for which non-computability arises is produced by additive (ergodic) sampling of orbits $(x_n,y_n)$ of $F$ using a suitable weight function
$u(x,y)\equiv u(x)$ with positive values:
\begin{equation}
  \label{eq:genform}
    \cF(x)=\sum_{i=1}^{\infty} y_n \cdot u(x_n).
\end{equation}

It is worth noting that such a function is a formal solution of the twisted cohomological equation
$$\cF-\psi \cdot \cF\circ G=u.$$
As noted above, in our case we set
$$\psi(x)\equiv x^\nu.$$
We will now need to make somewhat technical but straightforward general assumptions on $G$ and $u$.



Let $G: (0, 1) \to (0, 1)$ be a function which is $\mathcal{C}^{1}$ on a set $S \subseteq (0, 1)$, with $s_{0} := \inf S$, $s_{1} := \sup S$. Suppose that for a countable collection of disjoint open intervals $J_{i} = (\ell_{i}, r_{i})$ with $\ell_{1} > \ell_{2} > \cdots$, we have $S = J_{1} \sqcup J_{2} \sqcup J_{3} \sqcup \cdots$. Below we will denote $G_{i} := G|_{J_{i}}$, so that $G_{i}^{-1}: (s_{0}, s_{1}) \to J_{i}$ is the unique branch of $G^{-1}$ mapping into $J_{i}$. We assume $G$ satisfies the following criteria.

\begin{enumerate} [label={(\roman*)}]
    \item \label{G:I_i surjects} $G(J_{i}) = (s_{0}, s_{1})$ for each $i$.
    \item \label{G:expansivity} $|G'|>1$, and additionally there exist $\tau>1, \sigma >1$, and $\kappa \in \mathbb{Z}$ such that if we let
    \begin{equation*}
        \tau_{i, 1} := \inf_{x \in J_{i}} \left| G'(x) \right|, \text{\hspace{10px}} \tau_{i, \kappa} := \inf_{x \in J_{i}} \left| (G^{\kappa})'(x) \right|,
    \end{equation*}
    then we have both $\tau_{i, 1}^{-1} < \ell_{i} \cdot \sigma$ and $\tau_{i, \kappa}^{-1} < \ell_{i} \cdot \tau^{-1}$ for all $i$. In particular, since $\ell_{i} \leq s_{1} < 1$, this means that $|(G^{\kappa})'(x)| > \tau$ for all $x \in (s_{0}, s_{1})$.
    \item \label{G:decreasing} $G_{1}$ is decreasing on $J_{1}$.
    \item \label{G:l_i and r_i condition} Let $\varphi$ be the unique fixed point of $G_{1}$, and let $\delta_{G}(N) := G_{N}^{-1}(\varphi)-G_{N+1}^{-1}(\varphi)>0$. Then there is some constant $D>0$ such that for all $N \in \mathbb{Z}^{+}$,
    \begin{equation*}
        \dfrac{r_{N+1}}{\ell_{N+1}} \cdot \dfrac{r_{N}-\ell_{N+1}}{\delta_{G}(N)} < D.
    \end{equation*}
    \item \label{G:l_i and r_i condition squared} We have $\dfrac{r_{i}-\ell_{i}}{\ell_{i}^{2}} < D$ for some constant $D>0$ independent of $i$.
    \item \label{G:computability} $G$ is computable on its domain.
    \item \label{G:function g} $g(i) := \dfrac{r_{i}}{\ell_{i+1}} - 1 \to 0$ as $i \to \infty$. Note that since $g$ is positive and bounded from above, there is some constant $m_{g}>0$ for which $0 < g(i) \leq m_{g}$.
\end{enumerate}
It is easy to show that properties \ref{G:I_i surjects} and \ref{G:expansivity} imply that $G$ restricted to the set $\Lambda = \bigcap_{j=0}^{\infty}G^{-j}((s_{0}, s_{1}))$ is topologically conjugate to the full shift over the alphabet of positive integers $\mathbb{Z}^{+}$. As such we can write any $x \in \Lambda$ as its symbolic representation $x = [a_{1}, a_{2}, \ldots]$, where $G^{j-1}(x) \in J_{a_{j}}$. In this notation, $\varphi$ above can be written as $[1, 1, 1, \ldots]$. By \ref{G:decreasing} we have $\varphi < s_{1}$, from which it is straightforward to check that we indeed have $\delta_{G}(N)>0$ in \ref{G:l_i and r_i condition}. For convenience, for $j \in \mathbb{Z}^{+}$ we will denote $\eta_{j}: \Lambda \to \Lambda$, $\eta_{j} = G^{j-1}$, so that $$\eta_{j}([a_{1}, a_{2}, \ldots]) = [a_{j}, a_{j+1}, \ldots].$$ Henceforth we will assume $G$ is restricted to $\Lambda$.

Now, let $u: (s_{0}, s_{1}) \to \mathbb{R}^{+}$ be a $\mathcal{C}^{1}$ function satisfying the following:
\begin{enumerate}[label={(\roman*)}]
    \item \label{u:1} $\lim_{x \to s_{0}^{+}}u(x) = \infty$.
    \item \label{u:2} If $s_{1}=1$, then $\liminf_{N \to \infty} \inf_{z,w \in (s_{0},s_{1})} \dfrac{u \circ G_{1}^{-1} \circ G_{N}^{-1}(z)}{u \circ G_{1}^{-1} \circ G_{N}^{-1}(w)} > 0$.
    \item \label{u:3} There is some $C>0$ such that $|u'(x)| < \dfrac{C}{(x-s_{0})^{2}}$ for all $x \in (s_{0}, s_{1})$.
    \item \label{u:4} $u$ is computable on numbers $[a_{1}, a_{2}, \ldots]$ such that $a_{l}=1$ for all $l>l_{0}$ for some integer $l_{0}$.
    \item \label{u:5} $u$ is left-computable on all of $\Lambda$.
\end{enumerate}

In what follows, for any $\nu >0$ we define the \textit{generalized Brjuno function} to be
\begin{equation*}
    \Phi(x) := \sum_{i=1}^{\infty} \left( \eta_{0}(x) \cdots \eta_{i-1}(x) \right)^{\nu} \cdot u(\eta_{i}(x)),
\end{equation*}
where  $x = [a_{1}, a_{2}, \ldots] \in \Lambda$.


We note:

\begin{thm}\label{thm:included}
The following functions restricted to their corresponding sets $\Lambda$ fall under the definition of a generalized Brjuno function given above:
\begin{itemize}
    \item Yoccoz's Brjuno function $\cB$ (\ref{eq:YoccozBrjuno}).
    \item The first Wilton function $\cW_{1}$ (\ref{eq:Wilton1}).
    \item The functions $\cB_{\alpha, u, \nu}$ (\ref{eq:B_auv}) under the additional assumptions of \ref{u:2}-\ref{u:5} on $u$. For example, taking $G = A_{\alpha}$ for $\alpha \in [1/2,1]$ and $u(x)$ to be any of $\log^{n}(1/x)$ for $n \in \mathbb{Z}$ or $x^{-1}$ yields a generalized Brjuno function.
\end{itemize}
\end{thm}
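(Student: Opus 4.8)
The plan is to treat this as a verification result: for each of the three families we exhibit the data $(G,u,\nu)$ entering the definition of a generalized Brjuno function and then check, one by one, hypotheses \ref{G:I_i surjects}--\ref{G:function g} on $G$ and \ref{u:1}--\ref{u:5} on $u$; once these hold, the series $\Phi$ coincides term-by-term with the corresponding classical expression and there is nothing left to prove. Concretely: for Yoccoz's $\cB$ take $G$ to be the Gauss map $G(x)=\{1/x\}$ on the intervals $J_i=(\tfrac1{i+1},\tfrac1i)$, with $s_0=0$, $s_1=1$, $u(x)=-\log x$, $\nu=1$; for $\cW_1$ take the same $G$ and $\nu$ but $u(x)=\log^2(1/x)$; and for $\cB_{\alpha,u,\nu}$ take $G=A_\alpha$ with the prescribed $u$ and $\nu$, where \ref{u:2}--\ref{u:5} are part of the hypotheses and \ref{u:1} is the standing assumption $\lim_{x\to0^+}u(x)=\infty$. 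With $\nu=1$, $\eta_0\equiv1$, $\eta_j=G^{j-1}$ and $u=-\log$ the series $\Phi$ is exactly (\ref{eq:YoccozBrjuno}); with $u=\log^2(1/\cdot)$ it is (\ref{eq:Wilton1}); and with $G=A_\alpha$ it is (\ref{eq:B_auv}).

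For the Gauss map every hypothesis is an elementary computation. \ref{G:I_i surjects} is immediate from $G|_{J_i}(x)=1/x-i$. For \ref{G:expansivity} one takes $\kappa=2$: since $|G'(x)|=x^{-2}>1$ on $(0,1)$, $\tau_{i,1}=\inf_{J_i}x^{-2}=i^2$, and a short computation (writing $t=1/x\in(i,i+1)$ gives $|(G^2)'(x)|=t^2(t-i)^{-2}$, decreasing in $t$) yields $\tau_{i,2}=(i+1)^2$; hence $\tau_{i,1}^{-1}<\ell_i\sigma$ and $\tau_{i,2}^{-1}<\ell_i\tau^{-1}$ hold for all $i$ with any $\sigma>2$ and $\tau\in(1,2)$. \ref{G:decreasing} holds since $G_1(x)=1/x-1$ is decreasing, with fixed point $\varphi=\tfrac{\sqrt5-1}{2}$. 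For \ref{G:l_i and r_i condition} the inverse branches are $G_N^{-1}(y)=(y+N)^{-1}$, so $\delta_G(N)=\big((\varphi+N)(\varphi+N+1)\big)^{-1}$ and the quantity to be bounded equals $\tfrac{2(\varphi+N)(\varphi+N+1)}{N(N+1)}$, which is bounded over $N\ge1$ (and tends to $2$). \ref{G:l_i and r_i condition squared} reads $\tfrac{r_i-\ell_i}{\ell_i^2}=\tfrac{i+1}{i}\le2$; \ref{G:computability} is standard, since for irrational $x$ an oracle eventually determines the digit $\lfloor1/x\rfloor$; and \ref{G:function g} is $g(i)=\tfrac{r_i}{\ell_{i+1}}-1=\tfrac2i\to0$.

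It remains to check the weights. For $u=-\log$: \ref{u:1} is clear; for \ref{u:2} one computes $u\circ G_1^{-1}\circ G_N^{-1}(z)=\log\!\big(1+\tfrac1{z+N}\big)$, and the ratio of two such values for $z,w\in(0,1)$ tends to $1$ as $N\to\infty$, so the $\liminf$ is $1$; \ref{u:3} holds with $C=1$ since $x^2|u'(x)|=x<1$ on $(0,1)$; and \ref{u:4}, \ref{u:5} follow because $-\log$ is computable on $(0,1)$. For $\cW_1$ the only changes concern $u(x)=\log^2(1/x)$: \ref{u:3} follows from $x^2|u'(x)|=2x\log(1/x)\le2/e$, \ref{u:2} is the same computation with $\log^2$ replacing $\log$, and again $\log^2(1/\cdot)$ is computable on $(0,1)$, giving \ref{u:4}, \ref{u:5}.

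The substantive case, and the main obstacle, is $\cB_{\alpha,u,\nu}$ with $G=A_\alpha$. When $\alpha=1$ this is the Gauss map, already handled, and when $\alpha=1/2$ every branch of $A_\alpha$ is monotone and maps onto all of $(0,\alpha)$, so \ref{G:I_i surjects} holds directly and the remaining verifications run parallel to the Gauss case. For a general $\alpha\in\QQ\cap[1/2,1]$ the difficulty is that $A_\alpha$ has finitely many branches whose images are proper subintervals of $(0,\alpha)$; here one must use the finite Markov partition available for rational $\alpha$ (the orbits of the branch endpoints are eventually periodic --- the ``matching'' phenomenon) to isolate the correct family $\{J_i\}$ and invariant set $\Lambda$ on which the coding is a full shift, and then re-derive \ref{G:expansivity}, \ref{G:l_i and r_i condition}, \ref{G:l_i and r_i condition squared} from the explicit inverse branches (of the form $y\mapsto(y+a)^{-1}$) together with standard bounded-distortion estimates for $\alpha$-continued fractions, with extra care near the finitely many exceptional branches; \ref{G:decreasing}, \ref{G:computability}, \ref{G:function g} go as before. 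I expect the bookkeeping behind \ref{G:expansivity} and \ref{G:l_i and r_i condition} --- which must simultaneously encode eventual uniform expansion and the bounded geometry of the cylinder endpoints, through the coupled inequalities on $\tau_{i,1},\tau_{i,\kappa}$ and the $\delta_G$-ratio bound --- to be the delicate point. The weight conditions \ref{u:1}, \ref{u:3} for the concrete examples $u(x)=\log^n(1/x)$ ($n\in\ZZ$) and $u(x)=x^{-1}$ are then routine: $u\to\infty$ at $0$ and $x^2|u'(x)|$ is bounded in each case.
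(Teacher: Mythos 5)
Your verifications for the Gauss-map cases (Yoccoz's $\cB$ and Wilton's $\cW_1$) are correct and line up with the paper's Lemma~\ref{lem:1a}; in fact on two points you are a bit more explicit than the paper, namely the closed form $2(\varphi+N)(\varphi+N+1)/\big(N(N+1)\big)$ for the quantity in \ref{G:l_i and r_i condition} and the exact formula $u\circ G_1^{-1}\circ G_N^{-1}(z)=\log\bigl(1+\tfrac1{z+N}\bigr)$ for checking \ref{u:2}, where the paper instead invokes monotonicity of $u$ and a ``tedious computation''. So the first two bullets of the theorem are fine.

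The third bullet is where there is a genuine gap, and you flag it yourself. You leave the $\alpha$-continued-fraction case as a sketch (``I expect the bookkeeping... to be the delicate point''), and the strategy you gesture at --- finite Markov partitions for rational $\alpha$, matching of branch-endpoint orbits, bounded-distortion estimates --- is not what is needed and is both harder than and inconsistent with the theorem as stated (the theorem is for all $\alpha\in[1/2,1]$, not just rational $\alpha$, and the framework of \S3 requires a full shift, not a Markov one). The paper's actual route is much shorter: for $\alpha\in[1/2,1)$ set $n_1=\lceil 1/(1-\alpha)\rceil$ and restrict to the maximal invariant set $\Lambda\subseteq(s_0,s_1):=(0,1/n_1)$; on this restriction every branch is full, so \ref{G:I_i surjects} holds automatically and you are back in the full-shift setting. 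Moreover on $\Lambda$ one has $A_\alpha=A_{1/2}$ for all $\alpha\in(1/2,1)$, which reduces the endpoint and $\delta_G$ computations for \ref{G:l_i and r_i condition}--\ref{G:function g} to the explicit $\alpha=1/2$ case, where $n_1=2$ and $J_j=(\tfrac{2}{j+4},\tfrac{2}{j+3})$. Finally, and this is a real structural difference from your Gauss-map argument, the paper takes $\kappa=1$ (not $\kappa=2$) in \ref{G:expansivity} for $A_\alpha$: the inequality $\tau_{i,1}^{-1}=r_i^2<\ell_i\tau^{-1}$ fails near $i=1$ for the Gauss map but becomes achievable for $A_\alpha$ once $r_1=1/n_1$ is made small enough by further shrinking the invariant set, because the governing ratio $2(2n_1+i)/(i+2n_1-1)^2$ tends to $0$ as $n_1\to\infty$. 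Without that shrinking-the-invariant-set observation, the verification of \ref{G:expansivity} for $A_\alpha$ does not go through along the lines you propose. So the third case of the theorem remains unproved in your write-up, and the outline you give would need to be replaced, not just filled in.
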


Let us postpone the proof to \S~\ref{sec-proof-included} in the Appendix, and proceed to formulating the results.

\subsection{Main results}

\begin{thm}\label{thm:main}
  Let $x_*$ be a computable real number in $\Lambda$ with the property $y_*=\Phi(x_*)<+\infty.$ 
  There exists an oracle TM $M^\phi$ with a single input $n\in\NN$ such that the following holds. Suppose $y\in[y_*,+\infty)$ and
    $$\phi(n)=y_{n}\nearrow y.$$
    Then $M^\phi$ outputs $d_n$ such that
    $$|d_n-x|<2^{-n}\text{ for }x \in \Lambda\text{ such that }\Phi(x)=y.$$
\end{thm}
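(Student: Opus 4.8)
The plan is to construct the symbolic expansion $x=[a_1,a_2,\dots]$ of a preimage one digit at a time, reading the monotone data $y_n\nearrow y$ as it arrives, and to emit dyadic approximations of the point cut out by the committed digits. By \ref{G:expansivity} the inverse branches of $G^{\kappa}$ contract uniformly, so the cylinder $C_{(a_1,\dots,a_k)}=\{[a_1,\dots,a_k,\ast,\dots]\}\subseteq\Lambda$ has diameter at most $(s_1-s_0)\tau^{-\lfloor k/\kappa\rfloor}\to0$, and by \ref{G:computability} its endpoints $G_{a_1}^{-1}\!\circ\cdots\circ G_{a_k}^{-1}(s_0)$, $G_{a_1}^{-1}\!\circ\cdots\circ G_{a_k}^{-1}(s_1)$ are computable uniformly in $(a_1,\dots,a_k)$. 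Since the algorithm commits at least one new digit per stage no matter what the oracle does, once the committed cylinder has diameter below $2^{-n}$ the machine outputs a dyadic rational inside it as $d_n$; so it suffices to describe an effective procedure generating nested cylinders shrinking to a point $x$ with $\Phi(x)=y$. (We note in passing that $\Phi=\sup_kT_k$, with $T_k$ the partial sums below, is uniformly left-computable on $\Lambda$ by \ref{G:computability} and \ref{u:5} --- the ``easy direction'' --- and that by Proposition~\ref{prop:computability_continuity} nothing stronger can be expected, $\Phi$ being only lower semicontinuous, cf.\ Proposition~\ref{prop:Brjuno-max}.)

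The engine is the telescoped cohomological identity: iterating $\Phi=u+\psi\cdot(\Phi\circ G)$ gives, for $x=[a_1,\dots,a_k,\dots]$,
\begin{equation}\label{eq:telescope-plan}
\Phi(x)=T_k(x)+\Bigl(\textstyle\prod_{j=1}^{k}\eta_j(x)\Bigr)^{\!\nu}\Phi(\eta_{k+1}(x)),\qquad T_k(x):=\sum_{i=1}^{k}\Bigl(\textstyle\prod_{j=0}^{i-1}\eta_j(x)\Bigr)^{\!\nu}u(\eta_i(x)).
\end{equation}
From the hypotheses I extract three facts. (1) $\prod_{j=1}^{k}\eta_j(x)\le C\rho^{k}$ for constants $C>0$, $\rho\in(0,1)$ depending only on $G$: each $\eta_j(x)<s_1<1$, while by \ref{G:expansivity} and \ref{G:decreasing} a value of $\eta_j(x)$ near the top of its branch forces $\eta_{j+1}(x)$ near $s_0$, so any block of $\kappa$ (or $\kappa+1$) consecutive factors has product at most a fixed constant $<1$. (2) The ``base values'' $\beta_{(a_1,\dots,a_k)}:=\Phi([a_1,\dots,a_k,1,1,\dots])$ are computable uniformly in the word: by \eqref{eq:telescope-plan} at that point, $\beta$ is a finite sum of values of $u$ at numbers that are eventually all $1$'s --- computable by \ref{u:4} --- plus the explicit remainder $\bigl(\prod_{j=1}^k\eta_j\bigr)^{\nu}u(\varphi)/(1-\varphi^{\nu})$. (3) In the last digit, $\beta_{w\cdot1}=\beta_{w}$ (both words name the same sequence), and $a\mapsto\beta_{w\cdot a}$ is nondecreasing with limit $+\infty$ --- appending a larger digit sends $\eta_{k+1}((w\cdot a)\ast\varphi)=[a,1,1,\dots]=G_a^{-1}(\varphi)$ toward $s_0$, where $u\to\infty$ by \ref{u:1}, and monotonicity comes from the decreasing branch $G_1$ in \ref{G:decreasing} --- with consecutive gaps $\beta_{w\cdot(a+1)}-\beta_{w\cdot a}\le\delta(|w|)$ where $\delta(m)\to0$ as $m\to\infty$; here \ref{G:l_i and r_i condition}, \ref{G:l_i and r_i condition squared}, \ref{G:function g} (comparability of adjacent branches $J_N,J_{N+1}$), \ref{u:3} (via $|u'|<C(x-s_0)^{-2}$, Lipschitz control of $\Phi$ on subcylinders away from $s_0$), and \ref{u:2} (when $s_1=1$) are used.

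Granting (1)--(3), the algorithm maintains a committed word $w_k$ of length $k$ with the invariant $\beta_{w_k}\le y$. It initializes by emitting the digits of $x_*=[c_1,c_2,\dots]$ (computable digit-by-digit from $x_*$ via \ref{G:computability}), i.e.\ $w_k=(c_1,\dots,c_k)$; since $\beta_{(c_1,\dots,c_k)}\to\Phi(x_*)=y_*\le y$, either some incoming $y_n$ eventually certifies $\beta_{w_k}<y$ (and the invariant is then in force) or none ever does, whence $y=y_*$ and emitting $x_*$ forever yields a preimage. Once the invariant holds, at stage $k$ the algorithm computes $\beta_{w_k\cdot a}$ for $a\le k$ to precision $2^{-k}$, reads $y_k$, lets $a^{*}$ be the largest $a\le k$ with $\beta_{w_k\cdot a}+2^{-k}<y_k$ (so $\beta_{w_k\cdot a^{*}}<y$), and commits $w_{k+1}:=w_k\cdot a^{*}$, defaulting to $a^{*}=1$ if no such $a$ exists; since $\beta_{w_k\cdot1}=\beta_{w_k}$ the invariant is preserved. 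The committed $\beta_{w_k}$ are nondecreasing and $\le y$, hence converge to some $\beta_\infty\le y$; moreover $\beta_\infty=y$, since if $\beta_\infty<y$ then for $k$ large enough ($\delta(k)$ small and $y_k$ close to $y$) fact (3) provides a digit $a$ with $\beta_\infty<\beta_{w_k\cdot a}<y$ which the algorithm certifies and escalates to, contradicting $\beta_{w_k}\nearrow\beta_\infty$. Finally, fact (1) together with the choice of $a^{*}$ --- which via $\beta_{w_k\cdot a^{*}}<y$ and the domination of $\beta_{w_k\cdot a}$ by its last telescoped term $(\prod_{j\le k}\eta_j)^{\nu}u([a,1,1,\dots])$ keeps the tail digits from growing too fast --- forces the remainder term in \eqref{eq:telescope-plan} to tend to $0$, so $\Phi(x)=\lim_kT_k(x)$; and $T_k(x)-T_k(w_k\ast\varphi)\to0$ (the two partial sums differ only in high-index digits, whose terms carry weight $O(\rho^{\nu k})$), so $\Phi(x)=\lim_k T_k(w_k\ast\varphi)=\lim_k\beta_{w_k}=y$.

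I expect the main obstacle to be proving (1)--(3) with the stated uniformity --- above all the quantitative part of (3), that the gaps $\delta(|w|)$ are uniformly small for long words, together with the fact (hidden in the last step) that the escalation rule keeps the remainder in \eqref{eq:telescope-plan} negligible, equivalently that the constructed $x$ has $\Phi(x)<\infty$. This is exactly what hypotheses \ref{G:I_i surjects}--\ref{G:function g} and \ref{u:1}--\ref{u:5} are designed to deliver: \ref{G:expansivity} supplies the geometric decay of $\prod\eta_j$ and, with \ref{u:2}, prevents $u$ from oscillating wildly along $G_1^{-1}\circ G_N^{-1}$; \ref{G:decreasing} gives monotonicity in the last digit; \ref{G:l_i and r_i condition}, \ref{G:l_i and r_i condition squared}, \ref{G:function g} make consecutive branches comparable so the $\beta$-gaps shrink; \ref{u:3} provides the Lipschitz estimates on $\Phi$ needed both for the gap bound and for $T_k(x)-T_k(w_k\ast\varphi)\to0$; and \ref{G:computability}, \ref{u:4}, \ref{u:5} make $\beta_w$ computable and $T_k$ left-computable uniformly in the prefix, which is precisely what lets the digit-reading and $\beta$-evaluation processes run together. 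With these estimates in hand, the bookkeeping above is routine.
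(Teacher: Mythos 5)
Your outline is genuinely different from the paper's: you propose a greedy digit-by-digit construction (commit one new symbol per stage, using the ``base values'' $\beta_w = \Phi([w,1,1,\dots])$ as a monotone ruler), whereas the paper's algorithm works in blocks: at each stage it appends a \emph{long run} of $1$'s, then one large digit $N$, then $t$ further $1$'s, with $(m,t,N)$ computed together by Lemma~\ref{lem:uniformly_compute_approx} so that the boost is controlled (by Lemma~\ref{lem:5.18}) \emph{and} all future extensions can drop $\Phi$ by at most $2^{-k}$ (by Lemma~\ref{lem:5.19}). This block structure is not a cosmetic choice: the paper's quantitative step-size estimate (Lemma~\ref{Phi_diff}, via Lemma~\ref{lem:log_ratio_estimates}(3)) requires that positions $n+1,\dots,n+m-1$ preceding the inserted digit be all $1$'s, because that is what keeps the intermediate iterates $\eta_i(\beta^N)$ uniformly bounded away from $s_0$ and makes the perturbation factor $f(m)\to 0$. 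Your committed words $w_k$ carry no such guarantee.

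Two concrete gaps. First, fact~(3) asserts that $a\mapsto\beta_{w\cdot a}$ is nondecreasing. This is not proved in the paper and is almost certainly false in general: changing the digit at the last position perturbs every earlier $\eta_i$ and hence every earlier term of the sum, with signs that are not controlled. The paper deliberately avoids any monotonicity claim --- Lemma~\ref{lem:5.18} is deduced from a step bound $\Phi(\beta^{N+1})-\Phi(\beta^N)<\varepsilon$ (Lemma~\ref{Phi_diff}) plus divergence $\Phi(\beta^N)\to\infty$ (Lemma~\ref{lem:Phi_limit_unbounded}), which is strictly weaker than monotonicity but suffices to locate an $N$ with $\Phi(\omega)+\varepsilon<\Phi(\beta^N)<\Phi(\omega)+2\varepsilon$. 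Your argument that ``$\beta_{w_k}$ is nondecreasing, hence converges to $\beta_\infty$, and $\beta_\infty=y$'' leans on the unproved monotonicity. Second, and more seriously, the convergence $\Phi(x)=\lim_k\beta_{w_k}$ at the end is not justified. You invoke ``fact~(1) plus the choice of $a^*$'' to claim the telescope remainder tends to $0$ and that $T_k(x)-T_k(w_k\ast\varphi)\to0$, but that difference involves \emph{every} summand (the tail of the expansion affects $\eta_1,\dots,\eta_k$, not just the high-index ones), and controlling it is precisely the content of Lemma~\ref{lem:5.19} and property~\ref{item_list:6} of the paper's algorithm, which are engineered by inserting the extra $t$ ones. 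Your escalation rule picks ``the largest $a\le k$ with $\beta_{w_k\cdot a}+2^{-k}<y_k$'' with no padding step, so nothing prevents the constructed $x$ from having $\Phi(x)$ strictly below $\lim_k\beta_{w_k}$, or the remainder from failing to vanish. To close the proof you would, in effect, need to re-introduce the block structure --- long $1$-runs before each escalation and a computable stabilization length --- at which point the algorithm becomes the paper's.
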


As a corollary:
\begin{cor}
  If $y\in[y_*,+\infty)$ is left computable then there exists a computable $x\in \Lambda$ with
    $$\Phi(x)=y.$$
\end{cor}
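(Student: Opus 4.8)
The plan is to deduce the corollary directly from Theorem~\ref{thm:main} by supplying it with a computable oracle. Recall that $y$ being left-computable means there is an ordinary Turing Machine $N$, with no oracle, which on input $n$ outputs $y_{n}\in\DD$ with $y_{n}\nearrow y$. (If the witnessing sequence a given algorithm produces is not dyadic, I would first replace $y_{n}$ by a dyadic rational lying in $(y_{n}-2^{-n},\,y_{n}]$ and no smaller than $y_{n-1}$; this preserves monotonicity and the limit. This is routine bookkeeping.) Then the function $\phi(n):=y_{n}$ is a legitimate oracle of exactly the form required in Theorem~\ref{thm:main}, and — crucially — it is itself computed by the machine $N$.

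Next I would invoke the relativization principle. The oracle TM $M^{\phi}$ of Theorem~\ref{thm:main} makes only finitely many queries to $\phi$ during the (halting) computation on any fixed input $n$, and each query $\phi(m)=y_{m}$ can be answered by running $N$ as a subroutine. Hence the composite is an ordinary Turing Machine $M'$ with no oracle. Since $y\in[y_{*},+\infty)$ by hypothesis, Theorem~\ref{thm:main} tells us that on input $n$ the machine $M'$ outputs $d_{n}$ with $|d_{n}-x|<2^{-n}$, where $x\in\Lambda$ is the point the theorem guarantees satisfies $\Phi(x)=y$. Thus $(d_{n})_{n}$ is a computable sequence of dyadic rationals converging to $x$ at the explicit rate $2^{-n}$, which is precisely the definition of $x$ being a computable real. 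This proves the existence of a computable $x\in\Lambda$ with $\Phi(x)=y$.

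The only points needing attention are the two bookkeeping items flagged above: the normalization of an arbitrary left-computable witness to a strictly increasing dyadic sequence, and the standard fact that substituting a computable oracle into a uniform oracle machine produces a computable output. Neither involves any analytic difficulty; all of the substantive work — constructing the inverting algorithm and establishing that a preimage $x\in\Lambda$ of $y$ exists — is already contained in Theorem~\ref{thm:main}. For completeness I would also note the (trivial) converse: for computable $x\in\Lambda$ with $\Phi(x)<\infty$, the partial sums $\sum_{i=1}^{m}(\eta_{0}(x)\cdots\eta_{i-1}(x))^{\nu}u(\eta_{i}(x))$ form a computable increasing sequence with limit $\Phi(x)$, so $\Phi(x)$ is left-computable. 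Together with the corollary this shows $\Phi$ restricts to a surjection from the computable points of $\Lambda$ (with finite value) onto the left-computable reals in $[y_{*},+\infty)$, which is the phenomenon advertised in the introduction.
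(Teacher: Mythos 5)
Your proof is correct and follows exactly the argument the paper intends (the paper leaves the corollary implicit): since $y$ is left-computable, a TM generating an increasing dyadic sequence $y_n\nearrow y$ serves as a computable oracle $\phi$, and substituting it into the oracle machine $M^\phi$ of Theorem~\ref{thm:main} yields an ordinary TM computing the guaranteed $x\in\Lambda$ with $\Phi(x)=y$. The two bookkeeping points you flag (converting an arbitrary left-computable witness to a dyadic non-decreasing one, and the closure of computability under composition with a computable oracle) are both standard and handled correctly.
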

In fact, it is clear from the proof of Theorem~\ref{thm:main} that countably many such $x \in \Lambda$ exist.

As was shown in \cite{BM2020}, the Brjuno function $\cB$ attains its global maximum at
$$w_*=\frac{\sqrt{5}-1}{2}=[1,1,1,\ldots].$$
Therefore:
\begin{cor}
  If $y\in[\cB(w_*),+\infty)$  is left computable then there exists a computable $x\in(0,1)$ with
    $$\cB(x)=y.$$

\end{cor}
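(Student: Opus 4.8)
The plan is to obtain this Corollary as a pure specialization of the preceding one (the Corollary to Theorem~\ref{thm:main}) to the case $\Phi=\cB$, with the distinguished point taken to be $x_*=w_*$. By Theorem~\ref{thm:included}, Yoccoz's Brjuno function $\cB$ is a generalized Brjuno function: one takes $G$ to be the Gauss map, $\nu=1$ and $u(x)=-\log x$, in which case the associated invariant set $\Lambda$ is exactly the set of irrationals in $(0,1)$. In particular $\Lambda\subset(0,1)$, so any point of $\Lambda$ produced by the general result automatically lies in $(0,1)$, which is all that the target statement asks for on that score.

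Next I would check that $w_*=\tfrac{\sqrt 5-1}{2}=[1,1,1,\ldots]$ is an admissible choice of $x_*$ in Theorem~\ref{thm:main}, i.e. that it is a computable real in $\Lambda$ with $\cB(w_*)<+\infty$. Computability is immediate: $w_*$ is the positive root of $t^{2}+t-1=0$ (equivalently, a quadratic irrational with an eventually periodic continued fraction expansion), and such numbers are plainly computable. Finiteness is an explicit geometric sum: since $\eta_{j}(w_*)=w_*$ for every $j\ge 1$, we get
$$\cB(w_*)=\sum_{i=1}^{\infty} w_*^{\,i-1}\,(-\log w_*)=\frac{-\log w_*}{1-w_*}<+\infty .$$
Thus the hypotheses of Theorem~\ref{thm:main}, and hence of its Corollary, hold with $x_*=w_*$ and $y_*=\cB(w_*)$.

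Applying that Corollary with this choice yields precisely the assertion: for every left-computable $y\in[\cB(w_*),+\infty)$ there is a computable $x\in\Lambda\subset(0,1)$ with $\cB(x)=y$. The reason for singling out $w_*$, rather than any convenient computable Brjuno number, is that by \cite{BM2020} the golden mean realizes the extremum of $\cB$, so that $\cB(w_*)$ coincides with the value $y_*=\inf_{\theta}\cB(\theta)$ from the introduction; consequently $[\cB(w_*),+\infty)$ is the entire range of $\cB$ on the irrationals and the interval in the statement cannot be enlarged. There is no genuine obstacle in this argument — all the analytic and computability content is contained in the reduction of $\cB$ to the general framework (Theorem~\ref{thm:included}) and in Theorem~\ref{thm:main} itself; the present proof only adds the two elementary verifications displayed above, that $w_*$ is computable and that $\cB(w_*)$ is finite.
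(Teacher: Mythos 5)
Your proof is correct and follows essentially the same route as the paper: it is a direct specialization of the Corollary to Theorem~\ref{thm:main}, using that $\cB$ falls under the generalized framework (Theorem~\ref{thm:included} with $G$ the Gauss map, $\nu=1$, $u=-\log$, $\Lambda=(0,1)\setminus\QQ$) and that $w_*$ realizes the infimum $y_*$ of $\cB$ over $\Lambda$ (the result cited from \cite{BM2020}; the paper's wording ``global maximum'' there is evidently a slip for ``minimum,'' as $\cB$ is unbounded above and the framework needs $\Phi(x_*)=\inf_\Lambda\Phi$, which you correctly recognize). Your two explicit checks --- computability of $w_*$ as a quadratic irrational, and finiteness of $\cB(w_*)=(-\log w_*)/(1-w_*)$ via the geometric series coming from $\eta_j(w_*)=w_*$ --- are exactly the elementary bookkeeping the paper leaves implicit.
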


\section{Proof of Theorem~\ref{thm:main}}\label{section:Proof of Main Theorem}

\subsection{Three main lemmas}

It will be helpful to first outline the general strategy for the proof of Theorem~\ref{thm:main}. We are given a computable sequence $\{y_{n}\}$ converging upwards to some left-computable $y$, and we must find a computable $x \in (s_{0}, s_{1})$ for which $y = \Phi(x)$. This is done by starting with some $\gamma_{0} \in (s_{0}, s_{1})$ and iteratively modifying the symbolic representation of $\gamma_{k}$ to "squeeze" $\Phi(\gamma_{k})$ to be in the interval $(\Phi(\gamma_{s+k})-2^{-k}\varepsilon, \Phi(\gamma_{s+k})+2^{-k}\varepsilon)$ for some positive integer $s$. Passing to the limit, we obtain $x = \gamma_{\infty}$ for which $\Phi(x) = y$ as needed.

To ensure this strategy works, we need ways of carefully controlling the value of $\Phi(\gamma)$ from the symbolic representation of $\gamma$. This role is played by Lemmas \ref{lem:5.18}, \ref{lem:5.19}, and \ref{lem:5.20} below, which are analogous to Lemmas 5.18, 5.19, and 5.20 respectively in \cite{BY09} and whose proofs are in the subsection~\ref{sec-proof-main-lemmas} in the Appendix.

\begin{lem}\label{lem:5.18}
For any initial segment $I = [a_{1}, a_{2}, \ldots, a_{n}]$, write $\omega = [a_{1}, a_{2}, \ldots, a_{n}, 1, 1, 1, \ldots]$. Then for any $\varepsilon > 0$, there is an $m > 0$ and an integer $N$ such that if we write $\beta^{N} = [a_{1}, a_{2}, \ldots, a_{n}, 1, 1, \ldots, 1, N, 1, 1, \ldots]$, where the $N$ is located in the $(n+m)$-th position, then
\begin{equation*}
    \Phi(\omega) + \varepsilon < \Phi(\beta^{N}) < \Phi(\omega) + 2 \varepsilon.
\end{equation*}
\end{lem}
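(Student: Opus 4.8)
The plan is to analyze how $\Phi$ changes when we insert a large digit $N$ into the tail of the word $\omega$. Write $\omega = [a_1,\dots,a_n,1,1,1,\dots]$ and let $\gamma_m := [a_1,\dots,a_n,\underbrace{1,\dots,1}_{m},\varphi]$, i.e. the point agreeing with $\omega$ through position $n+m$. Since $\varphi = [1,1,\dots]$ is the fixed point of $G_1$, all the $\gamma_m$ equal $\omega$; the useful objects are rather the perturbations $\beta^N$ obtained by replacing the $1$ in slot $n+m$ by $N$. First I would split the series for $\Phi(\beta^N)$ at index $i = n+m$:
\begin{equation*}
  \Phi(\beta^N) = \sum_{i=1}^{n+m-1} (\eta_0\cdots\eta_{i-1})^\nu u(\eta_i) + (\eta_0\cdots\eta_{n+m-1}(\beta^N))^\nu\, u(\eta_{n+m}(\beta^N)) + \big(\text{tail from } i\ge n+m+1\big).
\end{equation*}
The first $n+m-1$ terms are literally identical to those of $\Phi(\omega)$, since $\beta^N$ and $\omega$ share the first $n+m-1$ symbols and $\eta_j$ depends only on those. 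So $\Phi(\beta^N) - \Phi(\omega)$ equals (the new slot-$(n+m)$ term plus the new tail) minus (the corresponding slot and tail of $\Phi(\omega)$). The key quantitative facts I would use are: the prefix product $P_{m} := \big(\eta_0(\omega)\cdots\eta_{n+m-1}(\omega)\big)^\nu$ satisfies $P_m \to 0$ geometrically as $m\to\infty$ by the expansivity hypothesis \ref{G:expansivity} (the $\kappa$-step derivative bound $|(G^\kappa)'|>\tau$ on $(s_0,s_1)$ gives uniform geometric contraction of these products); and for $\beta^N$ the prefix product through slot $n+m-1$ is, up to a bounded multiplicative constant, the same $P_m$ (the symbols agree), with the constant controlled via \ref{G:expansivity}'s comparison $\tau_{i,1}^{-1} < \ell_i\sigma$.

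Next I would handle the two pieces that actually differ. The slot-$(n+m)$ term of $\beta^N$ is $\approx P_m \cdot (\eta_{n+m}(\beta^N))^\nu u(\eta_{n+m}(\beta^N))$; here $\eta_{n+m}(\beta^N) = [N,1,1,\dots] \approx \ell_N$ is small for large $N$, so $(\eta_{n+m})^\nu u(\eta_{n+m})$ stays bounded (by \ref{u:1} and the growth bound \ref{u:3}, which forces $x^\nu u(x)$ to be bounded near $s_0$ when combined with \ref{u:2}; more precisely \ref{u:3} gives $u(x) = O((x-s_0)^{-1})$, and for the relevant examples $\nu\ge 1$). The tail of $\beta^N$ from $i \ge n+m+1$ contributes $\approx P_m\cdot(\eta_{n+m}(\beta^N))^\nu\cdot\Phi(\eta_{n+m}(\beta^N))$, and since $\eta_{n+m}(\beta^N) = [N,1,1,\dots]$ has $\Phi$-value within a bounded factor of $\Phi([1,1,\dots]) = \Phi(\varphi) < \infty$ (here one needs uniform continuity/boundedness of $\Phi$ on a neighborhood of $\varphi$ inside $\Lambda$, which follows from convergence at $\varphi$ plus \ref{u:2}), this whole contribution is $\asymp P_m$ times a quantity bounded above and below by positive constants. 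Meanwhile the corresponding slot-$(n+m)$-and-tail of $\Phi(\omega)$ is $\asymp P_m$ as well (it's $P_m\cdot\varphi^\nu u(\varphi) + \ldots$), so in fact $\Phi(\beta^N) - \Phi(\omega) = P_m\cdot Q(N,m)$ where $Q$ is bounded between two positive constants uniformly in $N$ large and $m$. Given $\varepsilon>0$, I would then \emph{first} choose $m$ large enough that $P_m \cdot (\text{upper bound on }Q) < 2\varepsilon$ and $P_m\cdot(\text{lower bound on }Q) > \varepsilon$ — wait, these two can't both be arranged by shrinking $P_m$ alone, so instead I choose $m$ so that $P_m$ lands in the window $(\varepsilon/c_-,\,2\varepsilon/c_+)$ where $c_\pm$ are the constants; since $P_m$ decreases geometrically with ratio bounded away from both $0$ and $1$ (again by \ref{G:expansivity}, the ratio is between $\tau^{-1/\kappa}$-type bounds and $1$), consecutive values of $P_m$ are within a bounded factor, so for a suitable choice of the comparison constants such an $m$ exists; then choose $N$ large so that $Q(N,m)$ is within the required tolerance of its limiting value, pinning $\Phi(\beta^N)$ into $(\Phi(\omega)+\varepsilon, \Phi(\omega)+2\varepsilon)$.

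The main obstacle I anticipate is making the constants $c_\pm$ and the geometric-ratio bound for $P_m$ fit together so that the target interval $(\Phi(\omega)+\varepsilon,\Phi(\omega)+2\varepsilon)$ — whose width-to-left-endpoint ratio is fixed — can always be hit. This is exactly where the structural hypotheses \ref{G:l_i and r_i condition}, \ref{G:l_i and r_i condition squared}, and \ref{G:function g} should be doing work: they control the ratios $r_i/\ell_i$, $r_i/\ell_{i+1}$ and $\delta_G(N)$, which is what lets one compare the prefix products and the small-digit tails of $\beta^N$ and $\omega$ with \emph{tight} constants rather than merely order-of-magnitude ones, and ensures $Q(N,m)$ has a genuine limit as $N\to\infty$. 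I would organize the proof so that these hypotheses are invoked precisely at the comparison steps, and I would be careful that the dependence is in the right order: $\varepsilon$ given first, then $m = m(\varepsilon)$, then $N = N(\varepsilon,m)$, matching the statement.
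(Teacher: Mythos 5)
Your proposal has a fundamental structural error that stems from a misreading of how $\eta_j$ depends on its argument. You assert that ``the first $n+m-1$ terms are literally identical to those of $\Phi(\omega)$, since $\beta^N$ and $\omega$ share the first $n+m-1$ symbols and $\eta_j$ depends only on those.'' This is false: $\eta_j(x) = G^{j-1}(x)$ is a real number whose symbolic representation is the \emph{entire tail} $[a_j, a_{j+1}, a_{j+2}, \ldots]$, so it depends on every symbol of $x$ from position $j$ onward. In particular $\eta_1(\beta^N) = \beta^N \neq \omega = \eta_1(\omega)$, so already the first summand of $\Phi(\beta^N)$ differs from that of $\Phi(\omega)$. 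Controlling these differences across the first $n+m-1$ terms is precisely the content of the paper's Lemma~\ref{lem:log_ratio_estimates} and its consequence Lemma~\ref{lem:Phi_minus_N_1_diff}, which are nontrivial; your proposal treats the problem as absent.

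The second, more consequential error is the claim that $\Phi(\eta_{n+m}(\beta^N)) = \Phi([N,1,1,\ldots])$ is ``within a bounded factor of $\Phi(\varphi)$'' because $[N,1,1,\ldots]$ is ``near $\varphi$''. It is not: $[N,1,1,\ldots]$ lies in $J_N$, which accumulates on $s_0$ as $N \to \infty$, while $\varphi \in J_1$ is far from $s_0$. By assumption~\ref{u:1}, $u([N,1,1,\ldots]) \to \infty$, so the $i=n+m$ term of $\Phi(\beta^N)$ (namely $(\eta_0(\beta^N)\cdots\eta_{n+m-1}(\beta^N))^\nu\, u(\eta_{n+m}(\beta^N))$, with no spurious extra factor $(\eta_{n+m})^\nu$ --- that factor first appears at $i=n+m+1$) diverges as $N\to\infty$ for any fixed $m$. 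Consequently your decomposition $\Phi(\beta^N)-\Phi(\omega) = P_m\,Q(N,m)$ with $Q$ bounded and convergent in $N$ is simply not available; in fact $\Phi(\beta^N)\to\infty$ as $N\to\infty$. This unboundedness is not an obstacle but the engine of the paper's proof: it fixes $m$ large (so that by Lemmas~\ref{lem:Phi_minus_N_N+1_diff} and~\ref{lem:Phi_1_diff} each increment $\Phi(\beta^{N+1})-\Phi(\beta^N)$ is $<\varepsilon$, with assumption~\ref{G:l_i and r_i condition} controlling the divergent slot via $\delta_G(N)$), notes $\Phi(\beta^1)=\Phi(\omega)$ and $\Phi(\beta^N)\to\infty$ (Lemma~\ref{lem:Phi_limit_unbounded}), and then the window $(\Phi(\omega)+\varepsilon,\Phi(\omega)+2\varepsilon)$ is hit by a discrete intermediate-value argument. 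Your worry about whether $P_m$ can ``land in the window'' is an artifact of the incorrect model; the correct mechanism hits the window by sweeping $N$, not by tuning the decay of $P_m$.
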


In the appendix, this Lemma will be proven under the slightly weaker assumptions used in section \ref{section:Generalized non-computability result}. Also, it is clear from the proof that $m$ can be taken arbitrarily large.

\begin{lem}\label{lem:5.19}
With $\omega$ as above, for any $\varepsilon>0$ there is an $m_{0}>0$, which can be computed from $(a_{0}, a_{1}, \ldots, a_{n})$ and $\varepsilon$, such that for any $m \geq m_{0}$ and any tail $I = [a_{n+m}, a_{n+m+1}, \ldots]$ we have
\begin{equation*}
    \Phi(\beta^{I}) > \Phi(\omega) - \varepsilon
\end{equation*}
\textit{where}
\begin{equation*}
    \beta^{I} = [a_{1}, a_{2}, \ldots, a_{n}, 1, 1, \ldots, 1, a_{n+m}, a_{n+m+1}, \ldots].
\end{equation*}
\end{lem}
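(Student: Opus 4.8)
The plan is to bound $\Phi(\beta^{I})$ from below by comparing it with $\Phi(\omega)$ term by term over a suitable initial range of indices, exploiting that $\beta^{I}$ and $\omega$ agree in their first $n+m-1$ symbols. Write $\beta_{i-1}(x):=\eta_{0}(x)\eta_{1}(x)\cdots\eta_{i-1}(x)$, so $\Phi(x)=\sum_{i\ge1}(\beta_{i-1}(x))^{\nu}u(\eta_{i}(x))$ with all summands positive. Two preliminary facts underlie the argument. First, $\omega$ is eventually the fixed point $\varphi=[1,1,1,\ldots]$, so for $i>n$ we have $\eta_{i}(\omega)=\varphi$ and $\beta_{i-1}(\omega)=\beta_{n}(\omega)\,\varphi^{\,i-1-n}$; since every $\eta_{j}<s_{1}<1$ we even get $\beta_{i-1}(\omega)\le s_{1}^{\,i-1}$, so $\Phi(\omega)<\infty$ (without which the asserted inequality would be vacuous) and, for $K\ge n$,
\begin{equation*}
\sum_{i>K}(\beta_{i-1}(\omega))^{\nu}u(\eta_{i}(\omega))\ \le\ \frac{u(\varphi)\,\varphi^{\nu(K-n)}}{1-\varphi^{\nu}}.
\end{equation*}
Second, by \ref{G:I_i surjects} and \ref{G:expansivity} a depth-$k$ cylinder of $G$ has length less than $\tau^{-\lfloor k/\kappa\rfloor}$, so two points of $\Lambda$ agreeing in their first $k$ symbols differ by at most $\tau^{-\lfloor k/\kappa\rfloor}$.

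I would then set $K=K(m):=n+\lfloor m/2\rfloor$ and show that, for $m$ large, the first $K$ terms of $\Phi(\beta^{I})$ and of $\Phi(\omega)$ are uniformly close. For $i\le K$ the words $\beta^{I}$ and $\omega$ agree in positions $i,\dots,n+m-1$, i.e.\ in at least $m/2$ places, so $|\eta_{i}(\beta^{I})-\eta_{i}(\omega)|\le\tau^{-\lfloor m/(2\kappa)\rfloor}=:\rho_{m}$, uniformly over $i\le K$ and over all tails $I$. For $i\le n$ both $\eta_{i}(\beta^{I})$ and $\eta_{i}(\omega)$ lie in $J_{a_{i}}$, and for $n<i\le K$ both lie in $J_{1}$; in either case they exceed $d_{0}:=\min(\ell_{a_{1}},\dots,\ell_{a_{n}},\ell_{1})>s_{0}\ge0$, a number fixed by the data $a_{0},\dots,a_{n}$. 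Hence by \ref{u:3}, $|u(\eta_{i}(\beta^{I}))-u(\eta_{i}(\omega))|\le C(d_{0}-s_{0})^{-2}\rho_{m}$, and summing $|\log\eta_{j}(\beta^{I})-\log\eta_{j}(\omega)|\le d_{0}^{-1}\rho_{m}$ over $j\le i-1\le K-1$ gives $|\log\beta_{i-1}(\beta^{I})-\log\beta_{i-1}(\omega)|\le(n+m)d_{0}^{-1}\rho_{m}=:\delta_{m}$, whence $|(\beta_{i-1}(\beta^{I}))^{\nu}-(\beta_{i-1}(\omega))^{\nu}|\le 2\nu\delta_{m}(\beta_{i-1}(\omega))^{\nu}$ for $\delta_{m}$ small. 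Since moreover $u(\eta_{i}(\beta^{I}))\le\max\{u(\varphi),\,u(\eta_{1}(\omega)),\dots,u(\eta_{n}(\omega))\}+C(d_{0}-s_{0})^{-2}\rho_{m}$ is bounded, and $\sum_{i\ge1}(\beta_{i-1}(\omega))^{\nu}\le(1-s_{1}^{\nu})^{-1}$, these estimates combine to
\begin{equation*}
\sum_{i=1}^{K}\bigl|(\beta_{i-1}(\beta^{I}))^{\nu}u(\eta_{i}(\beta^{I}))-(\beta_{i-1}(\omega))^{\nu}u(\eta_{i}(\omega))\bigr|\ \le\ C'(\delta_{m}+\rho_{m}),
\end{equation*}
uniformly in $I$, with $C'$ depending only on $a_{0},\dots,a_{n}$ and the fixed data $\tau,\kappa,\nu,\varphi,C$. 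Both $\rho_{m}=\tau^{-\lfloor m/(2\kappa)\rfloor}$ and $\delta_{m}=(n+m)d_{0}^{-1}\rho_{m}$ tend to $0$ as $m\to\infty$ (exponential decay beats the linear factor), and all constants are explicit, so one can compute $m_{0}=m_{0}(a_{0},\dots,a_{n},\varepsilon)$ such that for every $m\ge m_{0}$ the right-hand side above is $<\varepsilon/2$ and at the same time $u(\varphi)\varphi^{\nu\lfloor m/2\rfloor}/(1-\varphi^{\nu})<\varepsilon/2$; here the needed values of $u$ at the eventually-$1$ points $\varphi,\eta_{1}(\omega),\dots,\eta_{n}(\omega)$ are computable by \ref{u:4}.

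Finally, for $m\ge m_{0}$ and any tail $I$, discarding the nonnegative terms of $\Phi(\beta^{I})$ with index $>K$ and comparing the first $K$ terms gives
\begin{equation*}
\Phi(\beta^{I})\ \ge\ \sum_{i=1}^{K}(\beta_{i-1}(\beta^{I}))^{\nu}u(\eta_{i}(\beta^{I}))\ \ge\ \sum_{i=1}^{K}(\beta_{i-1}(\omega))^{\nu}u(\eta_{i}(\omega))-\tfrac{\varepsilon}{2}\ \ge\ \Phi(\omega)-\tfrac{\varepsilon}{2}-\tfrac{\varepsilon}{2}\ =\ \Phi(\omega)-\varepsilon,
\end{equation*}
as required. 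The step I expect to be the main obstacle is the uniform-in-$I$ term-by-term comparison of the roughly $n+m/2$ leading terms: one has to control the distortion of the partial products $\beta_{i-1}(\cdot)$ and the oscillation of $u$ along the fibre over the common initial word, and, above all, absorb the finitely many initial symbols $a_{1},\dots,a_{n}$ — which may be large and push some $\eta_{i}(\omega)$ near $s_{0}$, where $u$ blows up by \ref{u:1} and \ref{u:3} — into a single large but finite constant depending on $a_{0},\dots,a_{n}$.
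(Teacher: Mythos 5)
Your proof is correct in its essential structure but takes a genuinely different route from the paper's. The paper works \emph{multiplicatively}: it first fixes $m_{1}$ so that the tail of $\Phi(\omega)$ past index $n+m_{1}$ is below $\varepsilon/2$, then uses Lemma~\ref{lem:log_ratio_estimates}\ref{lem:u_diff_bounded:2},\ref{lem:u_diff_bounded:3} to force the \emph{ratio} of each of the first $n+m_{1}-1$ summands of $\Phi(\beta^{I})$ to the corresponding summand of $\Phi(\omega)$ to exceed $1-\varepsilon/(2\Phi(\omega))$, and concludes by a two-factor multiplicative estimate. You instead work \emph{additively}: you let the comparison window $K=n+\lfloor m/2\rfloor$ grow with $m$, use the cylinder-diameter bound from Lemma~\ref{cont_frac_continuity} to get $|\eta_{i}(\beta^{I})-\eta_{i}(\omega)|\le\rho_{m}$, and then apply the mean value theorem (with \ref{u:3} and the fact that the first $K$ symbols pin $\eta_{i}$ into a fixed $J_{a_{i}}$ away from $s_{0}$) to bound the \emph{difference} of the summands. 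This is a more elementary, self-contained route that avoids Lemma~\ref{lem:log_ratio_estimates}; the paper's version keeps the comparison window fixed at $n+m_{1}-1$ and only pushes $m_{0}$ to control the ratio, which makes the constants sharper, but both yield the result.

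One genuine, though small and repairable, slip: you assert that ``every $\eta_{j}<s_{1}<1$'' and hence $\sum_{i\ge1}(\beta_{i-1}(\omega))^{\nu}\le(1-s_{1}^{\nu})^{-1}$. In the setting of the paper $s_{1}=1$ is allowed (the Gauss map has $s_{1}=1$), and then this geometric bound is vacuous. The correct justification — which you in fact already set up — is that $\omega$ ends in all $1$'s, so $\beta_{i-1}(\omega)=\beta_{n}(\omega)\,\varphi^{\,i-1-n}$ for $i>n$ and $\sum_{i\ge1}(\beta_{i-1}(\omega))^{\nu}\le(n+1)+(\beta_{n}(\omega))^{\nu}/(1-\varphi^{\nu})$, a finite, computable constant depending only on $a_{1},\dots,a_{n}$, $\varphi$ and $\nu$. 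This is exactly what the paper uses to show the tail of $\Phi(\omega)$ converges, and with this substitution your constant $C'$ is finite and your argument goes through verbatim. No other change is needed; the rest of the argument, including the computability claim for $m_{0}$ via \ref{u:4} and \ref{G:computability}, is sound.
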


\begin{lem}\label{lem:5.20}
Let $\omega = [a_{1}, a_{2}, \ldots]$ be such that $\Phi(\omega) < \infty$. Write $\omega_{k} = [a_{1}, a_{2}, \ldots, a_{k}, 1, 1, \ldots]$. Then for every $\varepsilon>0$ there is an $m$ such that for all $k \geq m$,
\begin{equation*}
    \Phi(\omega_{k}) < \Phi(\omega) + \varepsilon.
\end{equation*}
\end{lem}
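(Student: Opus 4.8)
The content of the lemma is a one‑sided continuity statement. Since $\omega_{k}\to\omega$ in $\Lambda$, lower semicontinuity of $\Phi$ already gives $\liminf_{k\to\infty}\Phi(\omega_{k})\ge\Phi(\omega)$, so what must really be shown is the upper bound $\limsup_{k\to\infty}\Phi(\omega_{k})\le\Phi(\omega)$, from which the displayed inequality follows for each fixed $\varepsilon>0$ and all large $k$. This does \emph{not} follow from continuity, and the point is to prevent ``mass escaping to infinity'' in the defining series. Writing $\Pi_{i}(x):=\eta_{0}(x)\cdots\eta_{i-1}(x)$, the plan is to fix a truncation index $M$ (depending on $\varepsilon$ and $\omega$) and split
\[
\Phi(\omega_{k})=\sum_{i=1}^{M}\Pi_{i}(\omega_{k})^{\nu}\,u(\eta_{i}(\omega_{k}))\;+\;\sum_{i>M}\Pi_{i}(\omega_{k})^{\nu}\,u(\eta_{i}(\omega_{k})),
\]
bounding the tail uniformly in $k\ge M$ and treating the finite head by continuity.

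For the tail, two estimates do the work. First, iterating the $\kappa$-step expansion of \ref{G:expansivity} and the mean value theorem, every cylinder $[c_{1},\dots,c_{L}]$ has diameter $\le\tau^{-\lfloor L/\kappa\rfloor}(s_{1}-s_{0})$; being contained in $J_{c_{1}}$ it also has diameter $\le r_{c_{1}}-\ell_{c_{1}}<D\ell_{c_{1}}^{2}$ by \ref{G:l_i and r_i condition squared}. Combining these via $\min\{a,b\}\le\sqrt{ab}$ gives, for $i\le k$, $|\eta_{i}(\omega_{k})-\eta_{i}(\omega)|\le\diam[a_{i},\dots,a_{k}]\le C_{0}\,\ell_{a_{i}}\,\tau^{-c(k-i)}$ for constants $C_{0},c>0$; dividing by $\eta_{i}(\omega)>\ell_{a_{i}}$ and summing the resulting geometric series over the factors of $\Pi_{i}$ produces a constant $C_{4}=C_{4}(G)$ with $C_{4}^{-1}\le\Pi_{i}(\omega_{k})/\Pi_{i}(\omega)\le C_{4}$ whenever $i\le k$. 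Second, by \ref{u:3} and \ref{G:l_i and r_i condition squared} the oscillation of $u$ over any $J_{i}$ is at most $\big(\sup_{J_{i}}|u'|\big)(r_{i}-\ell_{i})\le CD\,\ell_{i}^{2}/(\ell_{i}-s_{0})^{2}$, which is bounded by an absolute constant $C'$ (one uses here that $s_{0}=0$ in all the relevant examples, so that $\ell_{i}^{2}/(\ell_{i}-s_{0})^{2}\equiv 1$; in general one adds the harmless hypothesis $\sup_{i}\ell_{i}^{2}/(\ell_{i}-s_{0})^{2}<\infty$). Hence $\sup_{J_{i}}u\le u(y)+C'$ for every $y\in J_{i}$.

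Now I would bound the tail. Since $\eta_{i}(\omega_{k})=\varphi$ for $i>k$ while $\Pi_{i}(\omega_{k})=\Pi_{k+1}(\omega_{k})\,\varphi^{\,i-k-1}$ there, and $\Pi_{k+1}(\omega_{k})<s_{1}^{k}$, the part $\sum_{i>k}$ equals $\Pi_{k+1}(\omega_{k})^{\nu}u(\varphi)/(1-\varphi^{\nu})\le s_{1}^{M\nu}u(\varphi)/(1-\varphi^{\nu})$. For $M<i\le k$ one has $\eta_{i}(\omega_{k})\in J_{a_{i}}$, so $u(\eta_{i}(\omega_{k}))\le u(\eta_{i}(\omega))+C'$, and together with $\Pi_{i}(\omega_{k})^{\nu}\le C_{4}^{\nu}\Pi_{i}(\omega)^{\nu}$ and $\Pi_{i}(\omega)<s_{1}^{i-1}$,
\[
\sum_{M<i\le k}\Pi_{i}(\omega_{k})^{\nu}u(\eta_{i}(\omega_{k}))\le C_{4}^{\nu}\sum_{i>M}\Pi_{i}(\omega)^{\nu}u(\eta_{i}(\omega))+\frac{C_{4}^{\nu}C'\,s_{1}^{M\nu}}{1-s_{1}^{\nu}}.
\]
Thus for every $k\ge M$ the tail of $\Phi(\omega_{k})$ is at most $C_{4}^{\nu}\sum_{i>M}\Pi_{i}(\omega)^{\nu}u(\eta_{i}(\omega))$ plus a quantity bounded by a constant times $s_{1}^{M\nu}$; since $\Phi(\omega)<\infty$ makes its own tails vanish and $s_{1}<1$, this tends to $0$ as $M\to\infty$, uniformly in $k\ge M$. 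Fix such an $M$ with this bound $<\varepsilon/2$.

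Finally, with $M$ fixed, the head $\sum_{i=1}^{M}\Pi_{i}(\omega_{k})^{\nu}u(\eta_{i}(\omega_{k}))$ is a fixed finite sum, and as $k\to\infty$ each pair $\eta_{i}(\omega_{k}),\eta_{i}(\omega)$ ($i\le M$) lies in the common cylinder $[a_{i},\dots,a_{k}]$, whose diameter tends to $0$; hence $\eta_{i}(\omega_{k})\to\eta_{i}(\omega)\in(s_{0},s_{1})$, so by continuity of $u$ there the head converges to $\sum_{i=1}^{M}\Pi_{i}(\omega)^{\nu}u(\eta_{i}(\omega))<\Phi(\omega)$. Choosing $m\ge M$ with the head $<\Phi(\omega)+\varepsilon/2$ for all $k\ge m$ then yields $\Phi(\omega_{k})<\Phi(\omega)+\varepsilon$ for $k\ge m$. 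The main obstacle is precisely the uniform tail bound: a naive term‑by‑term comparison of $\Phi(\omega_{k})$ with $\Phi(\omega)$ breaks down when some $a_{i}$ is large, since then $\eta_{i}(\omega_{k})$ and $\eta_{i}(\omega)$ both lie near $s_{0}$, where $u$ and $u'$ blow up; the remedy is to replace $u(\eta_{i}(\omega_{k}))$ by $\sup_{J_{a_{i}}}u$, which is controlled additively by $u(\eta_{i}(\omega))$, and to control the ratio $\Pi_{i}(\omega_{k})/\Pi_{i}(\omega)$ multiplicatively by playing the two cylinder‑diameter bounds against each other, with the real care going into making both constants genuinely independent of $i$ and $k$.
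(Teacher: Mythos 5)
Your overall strategy — truncate the series at a fixed index $M$, treat the finite head by continuity, and bound the tail \emph{uniformly} in $k\ge M$ by comparing it additively and multiplicatively to the tail of $\Phi(\omega)$ — is the same as the paper's. Your route through the tail is genuinely different and in some ways cleaner: instead of the paper's delicate case analysis (Lemma~\ref{lem:u_diff_bounded} and a multiplicative bound $\rho_{1}<u(\eta_{i}(\omega_{k}))/u(\eta_{i}(\omega))<\rho_{2}$ involving hypothesis \ref{u:2} for the digit-$1$ and digit-$a$-large cases), you use a single additive oscillation bound $u(\eta_{i}(\omega_{k}))\le u(\eta_{i}(\omega))+C'$ over the common interval $J_{a_{i}}$, paying for the constant $C'$ with geometric decay of $\Pi_{i}$; and your multiplicative control of $\Pi_{i}(\omega_{k})/\Pi_{i}(\omega)$ via cylinder-diameter estimates (interpolating $D\ell^{2}$ against $\tau^{-L/\kappa}$) is essentially equivalent to the paper's Lemma~\ref{lem:log_ratio_estimates}\ref{lem:u_diff_bounded:2}.

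However, there is a genuine gap in the geometric-decay step. You repeatedly invoke $\Pi_{i}(\omega)<s_{1}^{i-1}$ and $\Pi_{k+1}(\omega_{k})<s_{1}^{k}$, and the convergence of $\sum_{i>M}\Pi_{i}(\omega)^{\nu}$ as well as the bound $s_{1}^{M\nu}/(1-s_{1}^{\nu})$ both require $s_{1}<1$. For the Gauss map — i.e.\ for Yoccoz's Brjuno function, the principal example covered by this lemma — one has $s_{1}=\sup S=1$, so $s_{1}^{i-1}\equiv 1$, the series $\sum\Pi_{i}(\omega)^{\nu}$ is not majorized by a geometric series via this estimate, and the factor $1/(1-s_{1}^{\nu})$ is undefined. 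This is precisely what the paper's Lemma~\ref{lem:consecutive_eta_bound} is for: using that $G_{1}$ is decreasing (\ref{G:decreasing}), one gets a $\rho<1$ with $\eta_{k-1}(x)\eta_{k}(x)<\rho$ for all $x\in\Lambda$, hence $\Pi_{i}(x)\lesssim\rho^{\nu(i-1)/2}$ even when $s_{1}=1$. Replacing $s_{1}$ by $\rho^{1/2}$ in your tail bounds would close the gap and your argument would then go through. (Your side remark about $s_{0}=0$ and the factor $\ell_{i}^{2}/(\ell_{i}-s_{0})^{2}$ is a fair observation, but the paper's own Lemma~\ref{lem:u_diff_bounded} makes the same implicit identification of $\ell_{a}^{2}$ with $(\ell_{a}-s_{0})^{2}$; since $s_{0}=0$ in all of the worked examples, this is not the substantive issue.)
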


We proceed with proving Theorem~\ref{thm:main}.

\subsection{The proof}

We first require a few preliminary results.

\begin{prop}\label{prop:computes_Brjuno_value}
Let $m_{0}>0$ be an integer. Then there exists an oracle TM which, given access to a number $x = [a_{1}, a_{2}, \ldots] \in \Lambda$ such that $a_{m} = 1$ for $m>m_{0}$, computes $\Phi(x)$.
\end{prop}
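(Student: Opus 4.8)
The plan is to exhibit an explicit oracle TM. Since $x = [a_1,a_2,\ldots]$ has $a_m=1$ for all $m>m_0$, the tail of $x$ is eventually the fixed point $\varphi=[1,1,1,\ldots]$, so $x$ is determined by the finite data $(a_1,\ldots,a_{m_0})$ together with the fact that it lies in $\Lambda$; in particular, by assumption \ref{u:4} the number $u(\eta_i(x))$ is computable for every $i$, uniformly given the oracle for $x$ (one recovers $\eta_i(x)=[a_i,a_{i+1},\ldots]$ to arbitrary precision by iterating the computable map $G$ from \ref{G:computability}, using expansivity \ref{G:expansivity} to control error propagation). Likewise each factor $\eta_j(x)\in(s_0,s_1)\subseteq(0,1)$ is computable and bounded by $s_1<1$, so the partial products $\big(\eta_0(x)\cdots\eta_{i-1}(x)\big)^{\nu}$ are computable and, crucially, decay geometrically: by \ref{G:expansivity} there is $\kappa$ and $\tau>1$ with $|(G^\kappa)'|>\tau$, which (together with each branch surjecting onto $(s_0,s_1)$ and the mean value theorem) forces $\eta_{i}(x)\eta_{i+\kappa}(x)\cdots$-type products to contract, giving a bound $\big(\eta_0\cdots\eta_{i-1}\big)^{\nu} \le C_1 \rho^{i}$ for some $\rho<1$ and $C_1$ depending only on $G,\nu$.

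Next I would bound the weights $u(\eta_i(x))$ so as to get a computable tail estimate. For $i>m_0$ we have $\eta_i(x)=\varphi$, a fixed computable number with $u(\varphi)<\infty$; for $i\le m_0$, $\eta_i(x)\in(s_0,s_1)$ and $u(\eta_i(x))$ is a finite (computable) real. The only danger is that $\eta_i(x)$ could be near $s_0$, where $u$ blows up by \ref{u:1}; but here $\eta_i(x)=[a_i,\ldots]$ with $a_i\ge 1$, and for $i>m_0$ it equals $\varphi$, which is a definite distance from $s_0$, while for the finitely many $i\le m_0$ the values $\eta_i(x)$ are just finitely many fixed points that the machine can pin down to any precision from the oracle. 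Hence $\sum_{i\ge 1} \big(\eta_0(x)\cdots\eta_{i-1}(x)\big)^{\nu} u(\eta_i(x))$ converges, and combining the geometric decay of the products with the uniform finiteness of the weights (using that $u$ is continuous by Proposition~\ref{prop:computability_continuity}, hence bounded on the relevant compact pieces) yields an explicit, computable-from-$m_0$ bound of the form $\sum_{i>K} \le C_2\rho^{K}$ on the tail of the series. This is what lets the machine know, given target precision $2^{-n}$, how many terms $K=K(n,m_0)$ to sum.

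The algorithm is then: on input $n$, compute $K=K(n,m_0)$ so that the tail past $K$ is $<2^{-n-1}$; for each $i\le K$, query the oracle and run the computable routines for $G$ and $u$ to produce an approximation of $\big(\eta_0(x)\cdots\eta_{i-1}(x)\big)^{\nu}u(\eta_i(x))$ to within $2^{-n-1}/K$; output the sum of these. Correctness follows from the triangle inequality. The main obstacle is the uniformity of all these estimates: the bound $K(n,m_0)$ and the error-propagation constants for computing $\eta_i(x)$ via iteration of $G$ must depend only on $m_0$ (and $n$), not on the particular $(a_1,\ldots,a_{m_0})$ — i.e. they must be uniform over the (infinite) set of such $x$. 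This is where assumptions \ref{G:expansivity} (giving $\tau,\sigma,\kappa$ uniformly), \ref{G:l_i and r_i condition squared}, and \ref{u:3}–\ref{u:4} do the real work: \ref{u:3} controls how fast $u$ can vary near $s_0$, and \ref{G:expansivity} together with the surjectivity \ref{G:I_i surjects} controls how precisely one must know $x$ to locate $\eta_i(x)$ within its cylinder $J_{a_i}$. I would isolate these uniform bounds as the heart of the argument and treat the assembly of the TM as routine bookkeeping.
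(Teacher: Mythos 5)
Your approach is correct in spirit and close to the paper's, but the paper's argument is both simpler and cleaner, and one step of your justification doesn't actually hold as stated. The paper splits $\Phi(x)$ at index $m_0$ and observes that the tail is not merely bounded but \emph{exactly} a geometric series: for $i>m_0$ one has $\eta_i(x)=\varphi$, so the tail equals $\bigl(\eta_0(x)\cdots\eta_{m_0}(x)\bigr)^{\nu}\,u(\varphi)/(1-\varphi^{\nu})$, which is directly computable from the oracle together with the computability of $G$ (assumption \ref{G:computability}) and of $u$ on all-ones tails (assumption \ref{u:4}). No truncation estimate is needed, and neither \ref{G:l_i and r_i condition squared} nor \ref{u:3} plays any role. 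Your version instead bounds the tail and truncates at a computable $K(n,m_0)$; that would work, but your justification that the partial products $\bigl(\eta_0\cdots\eta_{i-1}\bigr)^{\nu}$ decay geometrically ``by \ref{G:expansivity}'' is off: expansivity controls derivatives of $G^{\kappa}$, not the sizes of the iterates $\eta_j(x)$ themselves, and in fact individual $\eta_j(x)$ can be arbitrarily close to $1$ when $s_1=1$ (e.g.\ the Gauss map with $\eta_j=[1,N,\ldots]$). The paper's general tool for such decay is Lemma~\ref{lem:consecutive_eta_bound}, which uses \ref{G:decreasing} rather than expansivity; but here neither is needed, since for $i>m_0$ the factor $\eta_i(x)=\varphi<r_1=s_1\le 1$ is a fixed number strictly below $1$, giving decay $\varphi^{\nu(i-m_0)}$ for free. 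Also, you appeal to Proposition~\ref{prop:computability_continuity} to get continuity of $u$, which is unnecessary ($u$ is assumed $\mathcal{C}^1$) and slightly circular here since $u$ is only assumed computable on the eventually-one numbers. In short: the decomposition is the right one, but you should just evaluate the tail in closed form; the extra machinery you invoke is unneeded and in one place mis-stated.
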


\begin{proof}
Let $\varphi = [1, 1, 1, \ldots]$. For any $x$ we have
\begin{align*}
    \Phi(x) &= \sum_{i=1}^{m_{0}} \left( \eta_{0}(x) \cdots \eta_{i-1}(x) \right)^{\nu} \cdot u(\eta_{i}(x)) + \sum_{i=m_{0}+1}^{\infty} \left( \varphi^{\nu} \right)^{n-1} \cdot u(\varphi) \\
&= \sum_{i=1}^{m_{0}} \left( \eta_{0}(x) \cdots \eta_{i-1}(x) \right)^{\nu} \cdot u(\eta_{i}(x)) + \frac{\varphi^{\nu \cdot m_{0}}}{1-\varphi^{\nu}} \cdot u(\varphi).
\end{align*}
Since $u$ is computable on each number $\eta_{i}(x)$ whose symbolic representation ends in all ones by assumption \ref{u:4} on $u$, and each $\eta_{j}$ is computable on all of $\Lambda$ by assumption \ref{G:computability} on $G$, there is an oracle TM which, given access to $x$, computes the sum on the left to an arbitrary precision. The construction of this oracle TM is independent of $x$, and depends only on $m_{0}$. Additionally, since $\varphi$ also ends in all ones there is a TM which computes the value of the term on the right to an arbitrary precision, also depending only on $m_{0}$. Combining these Turing Machines in the obvious way we obtain an oracle TM which, given access to $x$, computes $\Phi(x)$ to an arbitrary precision.
\end{proof}

\begin{lem}\label{lem:uniformly_compute_approx}
Given an initial segment $I = [a_{0}, a_{1}, \ldots, a_{n}]$ and $m_{0}>0$, write \\ $\omega = [a_{0}, a_{1}, \ldots, a_{n}, 1, 1, \ldots]$. Then for all $\varepsilon>0$, we can uniformly compute $m>m_{0}$ and $t, N \in \mathbb{Z}^{+}$ such that if we write $\beta = [a_{0}, a_{1}, \ldots, a_{n}, 1, 1, \ldots, 1, N , 1, 1, \ldots]$, where $N$ is in the $(n+m)$-th position, we have:
\begin{equation} \label{eq:1'}
    \Phi(\omega) + \varepsilon < \Phi(\beta) < \Phi(\omega) + 2\varepsilon,
\end{equation}
and for any $\gamma = [a_{0}, a_{1}, \ldots, a_{n}, 1, 1, \ldots, 1, N , 1, \ldots, 1, c_{n+m+t+1}, c_{n+m+t+2}, \ldots]$, we have
\begin{equation} \label{eq:2'}
    \Phi(\gamma) > \Phi(\omega) - 2^{-n}.
\end{equation}
\end{lem}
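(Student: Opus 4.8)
The plan is to combine Lemmas \ref{lem:5.18} and \ref{lem:5.19}, upgrading them to a \emph{uniform} (algorithmic) statement. First I would apply Lemma \ref{lem:5.18} to the initial segment $I$ and the given $\varepsilon$: this produces some $m$ (which by the remark after Lemma \ref{lem:5.18} may be taken arbitrarily large, in particular $>m_0$) and an integer $N$ so that $\beta = \beta^N = [a_0,\dots,a_n,1,\dots,1,N,1,1,\dots]$ (with $N$ in position $n+m$) satisfies \eqref{eq:1'}. The key point here is \emph{computability} of the choice: $\Phi(\omega)$ and $\Phi(\beta^N)$ are both values of $\Phi$ at points whose symbolic expansion is eventually all ones, so by Proposition \ref{prop:computes_Brjuno_value} they are computable from the finite data; hence by searching over pairs $(m,N)$ with $m>m_0$ and evaluating $\Phi(\omega)$ and $\Phi(\beta^N)$ to sufficient precision, a TM can effectively locate a valid pair, and the search terminates because Lemma \ref{lem:5.18} guarantees one exists. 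This gives us $m$ and $N$ uniformly.

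Next I would invoke Lemma \ref{lem:5.19}, but applied to the \emph{extended} initial segment $[a_0,\dots,a_n,1,\dots,1,N]$ of length $n+m$, with the error parameter $2^{-n}$ in place of $\varepsilon$. Lemma \ref{lem:5.19} then yields an $m_0'$, computable from that finite segment and from $2^{-n}$, such that inserting a block of at least $m_0'$ ones immediately after position $n+m$ and then attaching \emph{any} tail keeps the value of $\Phi$ above $\Phi(\omega')-2^{-n}$, where $\omega' = [a_0,\dots,a_n,1,\dots,1,N,1,1,\dots]$. I set $t := m_0'$. Any $\gamma$ of the stated form is exactly such an extension, so $\Phi(\gamma) > \Phi(\omega') - 2^{-n}$. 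Finally, since $\omega'$ agrees with $\beta$ in all positions and differs only in that its tail (after position $n+m$) is all ones—which is precisely what $\beta$ already is—we have $\omega' = \beta$, and from \eqref{eq:1'} we get $\Phi(\omega') = \Phi(\beta) > \Phi(\omega) + \varepsilon > \Phi(\omega)$. Therefore $\Phi(\gamma) > \Phi(\omega) - 2^{-n}$, which is \eqref{eq:2'}. All quantities $m, N, t$ were produced by terminating algorithmic searches whose inputs are the finite data $I$, $m_0$, $\varepsilon$, so the construction is uniform.

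The main obstacle I expect is bookkeeping the uniformity claim rather than any genuine analytic difficulty: one must check that Lemma \ref{lem:5.18}'s proof really does give an effective halting procedure for finding $(m,N)$ — this rests on being able to compute the relevant $\Phi$-values, which is Proposition \ref{prop:computes_Brjuno_value}, together with strict inequalities in \eqref{eq:1'} so that sufficiently precise approximations certify validity — and that Lemma \ref{lem:5.19}'s output $m_0'$ is genuinely computable from its finite input, as that lemma asserts. A secondary subtlety is the indexing: making sure the position of $N$ is consistently ``$n+m$'', that the inserted ones in $\gamma$ occupy positions $n+m+1,\dots,n+m+t$, and that Lemma \ref{lem:5.19} is applied with the correct (lengthened) initial segment so that its conclusion matches the form of $\gamma$ verbatim.
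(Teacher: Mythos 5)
Your proposal matches the paper's own proof essentially verbatim: both first use Lemma~\ref{lem:5.18} to produce $(m,N)$ with $m>m_0$ satisfying \eqref{eq:1'}, both observe that because $\omega$ and $\beta$ have eventually-all-ones expansions their $\Phi$-values are computable (Proposition~\ref{prop:computes_Brjuno_value}), so a terminating exhaustive search over pairs $(m,N)$ locates a certified pair, and both then apply Lemma~\ref{lem:5.19} to the extended segment $[a_0,\dots,a_n,1,\dots,1,N]$ with error $2^{-n}$ to compute $t$ and derive \eqref{eq:2'} via $\Phi(\gamma)>\Phi(\beta)-2^{-n}>\Phi(\omega)-2^{-n}$. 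Your closing remarks about strict inequalities being needed for the search to certify, and about the indexing, correctly identify the only subtleties; no gap.
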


\begin{proof}
We will first show that such $m, N$ exist, then give an algorithm to compute them. Let $\varepsilon>0$ be given. By Lemma~\ref{lem:5.18}, there exists $m$ (and we can make $m>m_{0}$) and $\exists N \in \mathbb{Z}^{+}$ such that
\begin{equation*}
    \Phi(\omega) + \varepsilon < \Phi(\beta) < \Phi(\omega) + 2\varepsilon.
\end{equation*}
Taking $I'=[a_{0}, a_{1}, \ldots, a_{n}, 1, 1, \ldots, 1, N]$ and $\omega'=\beta$, applying Lemma~\ref{lem:5.19} with $\varepsilon'=2^{-n}$ we get $t_{0}>0$ which can be computed from $(a_{0}, a_{1}, \ldots, a_{n}, 1, 1, \ldots, 1, N)$ such that for all $t \geq t_{0}$ and any tail $I = [c_{n+m+t+1}, c_{n+m+t+2}, \ldots]$ we have
\begin{equation*}
    \Phi(\gamma) > \Phi(\omega') -2^{-n} = \Phi(\beta)-2^{-n} > \Phi(\omega)-2^{-n}
\end{equation*}
as needed.

Since $\omega, \beta$ have symbolic representations ending in all ones, for any specific $m,N$ we can compute $\Phi(\omega)$ and $\Phi(\beta)$ by Proposition~\ref{prop:computes_Brjuno_value}. So, we can find the required $m,N$ by enumerating all pairs $(m,N)$ and exhaustively checking equations (\ref{eq:1'}), (\ref{eq:2'}) for each of them. We know that we will eventually find a pair for which these equations hold. Once we have $m$ and $N$, we can use Lemma~\ref{lem:5.19} to compute $t$.
\end{proof}

\begin{lem}\label{lem:inf_over_heads}
The infimum $\Phi(x_{*})$ of $\Phi(x)$ over all $x \in \Lambda$ is equal to the infimum over the numbers whose symbolic representations have only finitely many terms that are not $1$:
\begin{equation*}
    \Phi(x_{*}) = \inf_{x=[a_{1},a_{2}, \ldots, a_{k}, 1, 1, \ldots]} \Phi(x).
\end{equation*}
\end{lem}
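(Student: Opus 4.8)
The plan is to establish the two inequalities between infima separately. Write $I_\Lambda := \inf_{x\in\Lambda}\Phi(x)$ for the left-hand side and $I_{\mathrm{fin}} := \inf\{\Phi(x) : x=[a_1,\ldots,a_k,1,1,\ldots]\}$ for the right-hand side. Since every number whose symbolic expansion is eventually constant equal to $1$ lies in $\Lambda$, the infimum defining $I_{\mathrm{fin}}$ is taken over a subset of the one defining $I_\Lambda$, so $I_\Lambda \le I_{\mathrm{fin}}$ with no work.

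For the reverse inequality $I_{\mathrm{fin}} \le I_\Lambda$, I would argue as follows. If $I_\Lambda = +\infty$ there is nothing to prove, so assume $I_\Lambda < \infty$ and fix $\varepsilon > 0$. Choose $x_0 = [a_1,a_2,\ldots]\in\Lambda$ with $\Phi(x_0) < I_\Lambda + \varepsilon/2$; in particular $\Phi(x_0) < \infty$, which is exactly the hypothesis needed to invoke Lemma~\ref{lem:5.20}. That lemma, applied to $\omega = x_0$ with the parameter $\varepsilon/2$, yields an integer $m$ such that $\Phi(\omega_k) < \Phi(x_0) + \varepsilon/2$ for all $k\ge m$, where $\omega_k=[a_1,\ldots,a_k,1,1,\ldots]$. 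Taking $k=m$ produces a number with only finitely many entries different from $1$ and satisfying $\Phi(\omega_m) < \Phi(x_0)+\varepsilon/2 < I_\Lambda+\varepsilon$, so $I_{\mathrm{fin}} \le I_\Lambda + \varepsilon$. Letting $\varepsilon\to 0$ gives $I_{\mathrm{fin}}\le I_\Lambda$, and together with the first paragraph this proves $I_\Lambda = I_{\mathrm{fin}}$.

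This is really a corollary of Lemma~\ref{lem:5.20}, and I do not anticipate any serious obstacle. The one point worth stating carefully is that Lemma~\ref{lem:5.20} requires the base point to have finite $\Phi$-value, so one must first pass to a near-minimizer $x_0$ with $\Phi(x_0) < \infty$ before truncating its expansion; the degenerate case $I_\Lambda = +\infty$ is handled trivially by the (always valid) inequality $I_\Lambda \le I_{\mathrm{fin}}$. For concreteness, $\Phi$ is in fact finite at $\varphi = [1,1,1,\ldots]$, where it equals $u(\varphi)/(1-\varphi^{\nu})$ since $\varphi\in(s_0,s_1)$ and $0<\varphi^{\nu}<1$, so the non-degenerate case is the one that actually occurs.
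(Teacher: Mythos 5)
Your proof is correct and follows essentially the same route as the paper's: pick an $\varepsilon/2$-near-minimizer $x$ of $\Phi$ over $\Lambda$, truncate its symbolic expansion via Lemma~\ref{lem:5.20} to get $x_k=[a_1,\ldots,a_k,1,1,\ldots]$ with $\Phi(x_k)<\Phi(x)+\varepsilon/2$, and conclude. The paper leaves the trivial inequality $\inf_\Lambda\Phi\le\inf_{\text{fin}}\Phi$ implicit and does not bother with the degenerate $+\infty$ case (which cannot occur since $\Phi(\varphi)=u(\varphi)/(1-\varphi^\nu)<\infty$, as you also note); your added care on these points is harmless but changes nothing essential.
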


\begin{proof}
Let $\varepsilon>0$ be given. By definition of infimum, there exists $x=[a_{1}, a_{2}, \ldots]$ such that
\begin{equation*}
    \Phi(x) < \Phi(x_{*}) + \frac{\varepsilon}{2}.
\end{equation*}
Write $x_{k}=[a_{1}, a_{2}, \ldots, a_{k}, 1, 1, \ldots]$. By Lemma~\ref{lem:5.20}, there exists $m$ such that for $k \geq m$,
\begin{equation*}
    \Phi(x_{k}) < \Phi(x) + \frac{\varepsilon}{2}.
\end{equation*}
Thus $\Phi(x_{k})< \Phi(x_{*}) + \varepsilon$, so we can make $\Phi(x_{k})$ as close to $\Phi(x_{*})$ as we need.
\end{proof}

Now, we are given 
\begin{equation*}
    y_{n}\nearrow y, \hspace{15px} y\in[y_*,+\infty).
\end{equation*}

The case of $y = y_{*}$ is trivial, so we suppose $y > y_{*}$. Then there is an $s$ and an $\varepsilon>0$ such that 
\begin{equation*}
    y_{s} > \Phi(x_{*}) + 2\varepsilon.
\end{equation*}
By Lemma~\ref{lem:inf_over_heads}, there exists $\gamma_{0}=[a_{1}, a_{2}, \ldots, a_{n}, 1, 1, \ldots]$ such that
\begin{equation*}
    y_{s} - \varepsilon < \Phi(\gamma_{0}) < y_{s} - \frac{\varepsilon}{2}.
\end{equation*}
We will now give an algorithm for computing a number $x \in \Lambda$ for which $\Phi(x) = \lim \nearrow y_{n} = y$, which would complete the proof of Theorem~\ref{thm:main}. The algorithm works as follows. At state $k$ it produces a finite initial segment $I_{k}=[a_{0}, \ldots, a_{k}]$ such that the following properties hold:

\begin{enumerate}[label={(\arabic*)}]
    \item \label{item_list:1} $I_{0} = [a_{1}, a_{2}, \ldots, a_{n}]$.
    \item \label{item_list:2} $I_{k}$ has at least $k$ terms, i.e. $m_{k} \geq k$.
    \item \label{item_list:3} For each $k$, $I_{k+1}$ is an extension of $I_{k}$.
    \item \label{item_list:4} For each $k$, define $\gamma_{k}=[I_{k}, 1, 1, \ldots]$. Then
        \begin{equation*}
            y_{s+k} - 2^{-k} \varepsilon < \Phi(\gamma_{k}) < y_{s+k} - 2^{-(k+1)} \varepsilon.
        \end{equation*}
    \item \label{item_list:5} For each $k$, $\Phi(\gamma_{k}) > \Phi(\gamma_{k+1})$.
    \item \label{item_list:6} For each $k$ and any extension $\beta = [I_{k}, b_{m_{k}+1}, b_{m_{k}+2}, \ldots]$, we have
        \begin{equation*}
            \Phi(\beta) > \Phi(\gamma_{k})-2^{-k}.
        \end{equation*}
\end{enumerate}

The first three properties are easy to verify. The last three are checked using Lemma~\ref{lem:uniformly_compute_approx}. By this Lemma we can increase $\Phi(\gamma_{k-1})$ by any given amount, possibly in more than one step, by extending $I_{k-1}$ to $I_{k}$. Thus if we have
\begin{equation*}
    y_{s+k-1} -2^{-(k-1)} \varepsilon < \Phi(\gamma_{k-1}) < y_{s+k-1} -2^{-k} \varepsilon,
\end{equation*}
by virtue of $\{ a_{s+k} \}_{k=1}^{\infty}$ being non-decreasing we have both
\begin{equation*}
    y_{s+k-1} - 2^{-(k-1)} \varepsilon < y_{s+k} - 2^{-k} \varepsilon \hspace{15px} \text{and} \hspace{15px} y_{s+k-1} - 2^{-k} \varepsilon < y_{s+k} - 2^{-(k+1)} \varepsilon.
\end{equation*}
So, we can increase $\Phi(\gamma_{k-1})$ by such a fine amount that
\begin{equation*}
    y_{s+k} -2^{-k} \varepsilon < \Phi(\gamma_{k}) < y_{s+k} -2^{-(k+1)} \varepsilon,
\end{equation*}
satisfying the fourth and fifth properties. In performing this fine increase, we have used the fact that the $y_{s+k}$'s are computable. The last property is satisfied by Lemma~\ref{lem:uniformly_compute_approx} (\ref{eq:2'}).

Denote
\begin{equation*}
    x = \lim_{k \to \infty} \gamma_{k}.
\end{equation*}
The symbolic representation of $x$ is the limit of the initial segments $I_{k}$. This algorithm gives us at least one term of the symbolic representation of $x$ per iteration, and hence we would need at most $O(n)$ iterations to compute $x$ with precision $2^{-n}$. The initial segment of $\gamma_{0}$ can also be computed as in the proof of Lemma~\ref{lem:uniformly_compute_approx}. It remains to show that $x$ is the number we are after.

\begin{lem}\label{lem:limit_equals_y}
We have $\Phi(x) = y$.
\end{lem}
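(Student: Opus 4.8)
The plan is to show $\Phi(x)=y$ by squeezing $\Phi(x)$ between $y-\varepsilon'$ and $y+\varepsilon'$ for every $\varepsilon'>0$, using the two-sided control on $\Phi(\gamma_k)$ provided by properties \ref{item_list:4}, \ref{item_list:5}, and \ref{item_list:6}.

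\begin{proof}
First I would establish the upper bound $\Phi(x)\le y$. Since the symbolic representation of $x$ extends each $I_k$, we may write $x=[I_k,b_{m_k+1},b_{m_k+2},\dots]$ for every $k$. I would like to say $\Phi(\gamma_k)\to\Phi(x)$, but this requires care: $\gamma_k$ and $x$ agree on the first $m_k$ symbols, and $m_k\to\infty$, so $\gamma_k\to x$ in $\Lambda$; however $\Phi$ is only lower semicontinuous in general, so convergence of arguments does not immediately give convergence of values. Instead I would argue directly from the series. Fix $k$. The partial sum $\sum_{i=1}^{m_k}(\eta_0(x)\cdots\eta_{i-1}(x))^\nu u(\eta_i(x))$ depends only on the first $m_k$ symbols of $x$, hence equals the corresponding partial sum for $\gamma_k$, which is at most $\Phi(\gamma_k)$ (all terms are positive). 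Combining with property \ref{item_list:4}, each partial sum of $\Phi(x)$ is bounded above by $\Phi(\gamma_k)<y_{s+k}-2^{-(k+1)}\varepsilon<y$; letting first the partial sum index and then $k$ tend to infinity gives $\Phi(x)\le y$.

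Next I would establish the lower bound $\Phi(x)\ge y$. Here I use property \ref{item_list:6}: writing $x=[I_k,b_{m_k+1},\dots]$ as an extension of $I_k$, we get $\Phi(x)>\Phi(\gamma_k)-2^{-k}$. By property \ref{item_list:4}, $\Phi(\gamma_k)>y_{s+k}-2^{-k}\varepsilon$, so $\Phi(x)>y_{s+k}-2^{-k}\varepsilon-2^{-k}$. Since $y_{s+k}\nearrow y$ as $k\to\infty$ and $2^{-k}\varepsilon+2^{-k}\to 0$, taking the limit yields $\Phi(x)\ge y$. Together with the previous paragraph this gives $\Phi(x)=y$, and since the $y_n$ were assumed to satisfy $y_n\nearrow y$, we are done.

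The main obstacle I anticipate is precisely the subtlety in the upper bound: one is tempted to invoke continuity of $\Phi$, which fails, so the argument must be phrased at the level of partial sums, exploiting positivity of every summand and the fact that a fixed-length prefix of the series is literally shared between $x$ and $\gamma_k$. The lower bound is comparatively painless because property \ref{item_list:6} was engineered exactly for it; the only thing to check there is that $x$ genuinely is an extension of each $I_k$, which is immediate from properties \ref{item_list:2} and \ref{item_list:3} guaranteeing that the $I_k$ form a nested sequence of finite segments whose lengths diverge.
\end{proof}
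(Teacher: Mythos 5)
Your lower bound is correct and matches the paper: property \ref{item_list:6} applied to the extension $\beta=x$ of $I_k$ gives $\Phi(x)>\Phi(\gamma_k)-2^{-k}$, and combined with the left inequality in \ref{item_list:4} and $y_{s+k}\nearrow y$ this yields $\Phi(x)\geq y$. Your overall plan of avoiding continuity of $\Phi$ by working at the level of partial sums is also the right idea.

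However, the upper bound as written contains a concrete error. You assert that the partial sum $\sum_{i=1}^{m_k}\bigl(\eta_0(x)\cdots\eta_{i-1}(x)\bigr)^\nu\,u(\eta_i(x))$ ``depends only on the first $m_k$ symbols of $x$, hence equals the corresponding partial sum for $\gamma_k$.'' This is false: for each index $i$, $\eta_i(x)=[a_i,a_{i+1},a_{i+2},\dots]$ is the \emph{entire} symbolic tail of $x$ starting at position $i$, so it depends on every symbol of $x$, not just the first $m_k$. In particular $\eta_i(x)\neq\eta_i(\gamma_k)$ even for $i=1$, since $x$ continues with the true digits while $\gamma_k$ continues with all $1$'s after position $m_k$. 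What is shared between $x$ and $\gamma_k$ is a prefix of the \emph{symbol sequence}, not a prefix of the \emph{series terms}; the two are not the same because $u(\eta_i(\cdot))$ reads the whole tail. There is also no sign-definite monotonicity to rescue the comparison, since changing a deep symbol can move the real value in either direction. The fix is to reverse the order of the two limits, as the paper does implicitly: fix a truncation length $M$ independent of $k$; for each $n\leq M$ the $n$-th term of $\Phi(\gamma_k)$ converges to the $n$-th term of $\Phi(x)$ as $k\to\infty$ (since $\gamma_k\to x$ and, for fixed $n$, the term is a continuous function of the point by continuity of $G$ and $u$); all terms are nonnegative, so
\begin{equation*}
\sum_{n=1}^{M}\bigl(\eta_0(x)\cdots\eta_{n-1}(x)\bigr)^\nu u(\eta_n(x))=\lim_{k\to\infty}\sum_{n=1}^{M}\bigl(\eta_0(\gamma_k)\cdots\eta_{n-1}(\gamma_k)\bigr)^\nu u(\eta_n(\gamma_k))\leq\lim_{k\to\infty}\Phi(\gamma_k)=y,
\end{equation*}
and then letting $M\to\infty$ gives $\Phi(x)\leq y$. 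With that patch, your argument is essentially the same as the paper's.
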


\begin{proof}
Taking limits on all sides of \ref{item_list:4}, we get 
\begin{equation*}
    \lim_{k\to\infty} \Phi(\gamma_{k}) = \lim_{k\to\infty} y_{k} = y.
\end{equation*}
It remains to show $\lim_{k\to\infty} \Phi(\gamma_{k}) = \Phi(x)$.
As in Proposition~\ref{prop:computes_Brjuno_value}, denote $x = [a_{1}, a_{2}, \ldots]$ and let $\alpha_{k}=[a_{k}, a_{k+1}, \ldots], \alpha_{0}=1$. Let $\varphi = [1, 1, 1, \ldots]$, and additionally for any number $\xi = [b_{1}, b_{2}, \ldots] \in \Lambda$ denote $(\xi)_{k} = [b_{k}, b_{k+1}, \ldots]$. We have
\begin{align*}
    \Phi(x) &= \lim_{k\to\infty} \left( \sum_{n=1}^{m_{k}} \alpha_{0} \alpha_{1} \cdots \alpha_{n-1} \cdot u(\alpha_{n}) \right) \\
    &\leq \lim_{k\to\infty} \left( \sum_{n=1}^{m_{k}} \alpha_{0} \alpha_{1} \cdots \alpha_{n-1} \cdot u(\alpha_{n}) + \sum_{n=m_{k}+1}^{\infty} \varphi^{n-1} \cdot u(\varphi) \right) \\
    &= \lim_{k\to\infty} \Phi(\gamma_{k}).
\end{align*}
Additionally, taking limits on both sides of \ref{item_list:6} with $\beta = x$ yields
\begin{equation*}
    \lim_{k\to\infty} \Phi(\gamma_{k}) \leq \Phi(x),
\end{equation*}
therefore $\lim_{k\to\infty} \Phi(\gamma_{k}) = \Phi(x)$.
\end{proof}

This concludes the proof of Theorem~\ref{thm:main}.

\section{Generalized non-computability result} \label{section:Generalized non-computability result}

\subsection{Modified assumptions}

We will now prove a non-computability result about a slightly more broad class of generalized Brjuno functions than the one considered in section \ref{section:Generalized non-computability result}. For this result we require all the assumptions on $G$ except \ref{G:l_i and r_i condition squared} and \ref{G:computability}, and on $u$ we require only assumption \ref{u:1} together with a slightly weaker variation of assumption \ref{u:3}:
\begin{enumerate} [label={(\roman*$'$)}]
    \setcounter{enumi}{2}
    \item \label{u:3'} There is some $C>0$ such that $u'(x) < \dfrac{C}{(x-s_{0})^{2}}$ for all $x \in (s_{0}, s_{1})$.
\end{enumerate}
We can additionally allow for more flexibility in the definition of the generalized Brjuno function by adding a "sign" term:
\begin{equation*}
    \Phi(x) := \sum_{i=1}^{\infty} s(i) \cdot \left( \eta_{0}(x) \cdots \eta_{i-1}(x) \right)^{\nu} \cdot u(\eta_{i}(x)),
\end{equation*}
where $\nu >0$ as before and $s(i) \in \{-1, 1\}$.  

\begin{thm}\label{thm:examples_work}
The following functions restricted to their corresponding sets $\Lambda$ fall under the definition of a generalized Brjuno function given above:
\begin{itemize}
    \item Yoccoz's Brjuno function $\cB$ (\ref{eq:YoccozBrjuno}).
    \item The first Wilton function $\cW_{1}$ (\ref{eq:Wilton1}) and the second Wilton function $\cW_{2}$ (\ref{eq:Wilton2}).
    \item The functions $\cB_{\alpha, u, \nu}$ (\ref{eq:B_auv}) under the additional assumption of \ref{u:3'} on $u$. As before, taking $G = A_{\alpha}$ for $\alpha \in [1/2,1]$ and $u(x)$ to be any of $\log^{n}(1/x)$ for $n \in \mathbb{Z}$ or $x^{-1}$ yields a generalized Brjuno function.
\end{itemize}
\end{thm}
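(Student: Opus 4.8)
The plan is a direct assumption-check, carried out example by example. The key structural observation is that the hypotheses imposed in \S\ref{section:Generalized non-computability result} are strictly weaker than those already verified for Theorem~\ref{thm:included}: on $G$ we have dropped \ref{G:l_i and r_i condition squared} and \ref{G:computability}, and of $u$ we retain only \ref{u:1} and \ref{u:3'}, where \ref{u:3'} is implied by \ref{u:3}. Consequently the bulk of the argument is either already contained in \S\ref{sec-proof-included} or is easier than what is done there, and the two points I would actually need to address are: (a) the "sign" sequence $s(i)\in\{-1,1\}$ is now permitted, which is exactly what pulls the second Wilton function $\cW_2$ (provably not of the form $\cB_{\alpha,u,\nu}$) into the present class; and (b) a verification of \ref{u:1} and \ref{u:3'} for the weights not covered before, namely $u=-\log$ for $\cW_2$ and $u=\log^{n}(1/x)$, $u=x^{-1}$ for the $\cB_{\alpha,u,\nu}$ examples. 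Point (b) is immediate: each of these weights is positive, decreasing on $(s_{0},s_{1})$ and blows up at $s_{0}$, so $u'<0<C(x-s_{0})^{-2}$ gives \ref{u:3'} for free while \ref{u:1} is clear.

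For $\cB$, $\cW_{1}$ and $\cW_{2}$ the underlying map is the Gauss map $G(x)=\{1/x\}$, with $S=(0,1)$, $s_{0}=0$, $s_{1}=1$, and $J_{i}=(\tfrac{1}{i+1},\tfrac{1}{i})$, so $\ell_{i}=\tfrac{1}{i+1}$, $r_{i}=\tfrac{1}{i}$. I would verify the conditions on $G$ as follows. Property \ref{G:I_i surjects} is the classical surjectivity of each inverse branch. For \ref{G:expansivity}, $|G'(x)|=x^{-2}$ gives $\tau_{i,1}=i^{2}$; taking $\kappa=2$ one computes $|(G^{2})'(x)|=(1-ix)^{-2}$ on $J_{i}$, so $\tau_{i,2}=(i+1)^{2}$, and then $\tau_{i,1}^{-1}=i^{-2}<\sigma\ell_{i}$ and $\tau_{i,2}^{-1}=(i+1)^{-2}<\tau^{-1}\ell_{i}$ hold with, say, $\sigma=3$ and $\tau=3/2$. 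Property \ref{G:decreasing} holds because $G_{1}(x)=1/x-1$ is decreasing with fixed point $\varphi=(\sqrt{5}-1)/2=[1,1,1,\dots]$. For \ref{G:l_i and r_i condition}, $G_{N}^{-1}(\varphi)=(N+\varphi)^{-1}$, hence $\delta_{G}(N)=\bigl((N+\varphi)(N+1+\varphi)\bigr)^{-1}$, and the displayed ratio equals $\tfrac{2(N+\varphi)(N+1+\varphi)}{N(N+1)}$, which is bounded (it tends to $2$). Property \ref{G:function g} holds since $g(i)=r_{i}/\ell_{i+1}-1=2/i\to0$. Together with point (b) applied to $u=-\log$ (for $\cB$, $\cW_{2}$) and $u=\log^{2}(1/x)$ (for $\cW_{1}$), and with $s(i)\equiv1$ for $\cB$, $\cW_{1}$ and $s(i)=(-1)^{i+1}$ for $\cW_{2}$, this places all three functions in the class.

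For the family $\cB_{\alpha,u,\nu}$ the map is the $\alpha$-continued fraction map $G=A_{\alpha}$, and here I would lean on the classical structure theory of $\alpha$-continued fractions (cf.\ the analyses in \cite{Luzzi2010} and \cite{Bakhtawar2024}): on the appropriate invariant set $\Lambda$ the map $A_{\alpha}$ is conjugate to the full shift on $\ZZ^{+}$, each branch surjects onto $(s_{0},s_{1})$, $A_{\alpha}^{\kappa}$ is uniformly expanding for a fixed $\kappa$, and the endpoints satisfy $\ell_{i}\asymp i^{-1}$ and $r_{i}-\ell_{i}\asymp i^{-2}$, reducing to the Gauss map when $\alpha=1$. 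This yields \ref{G:I_i surjects}--\ref{G:l_i and r_i condition} and \ref{G:function g} -- precisely the computation already carried out in \S\ref{sec-proof-included} for Theorem~\ref{thm:included}, which uses neither \ref{G:l_i and r_i condition squared} nor \ref{G:computability}. On the weight side one simply posits \ref{u:3'}, as in the statement; since it is implied by \ref{u:3}, every $u$ admissible for Theorem~\ref{thm:included} remains admissible here, and for the named examples $\log^{n}(1/x)$ and $x^{-1}$ it holds for free by point (b) above, with \ref{u:1} clear. As $s(i)\equiv1$ here, this finishes the list.

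The one genuinely delicate point is the $A_{\alpha}$ case for $\alpha<1$: unlike the Gauss map, $A_{\alpha}$ has an $\alpha$-dependent branch combinatorics, so one must correctly identify the invariant set on which all branches become full and pin down the asymptotics of $\ell_{i}$, $r_{i}$ and $\delta_{G}(N)$ entering \ref{G:expansivity} and \ref{G:l_i and r_i condition}. Everything else is routine bookkeeping, and the cleanest write-up records the weakened hypotheses, refers back to \S\ref{sec-proof-included} for the $A_{\alpha}$ computation, and adds only the two short observations above.
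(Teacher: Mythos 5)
Your proposal is correct and takes exactly the approach the paper intends: the paper omits the proof of this theorem with the remark that it is ``completely analogous'' to Theorem~\ref{thm:included}, and your write-up is a faithful fill-in — the weakened hypotheses on $G$ and $u$ are a subset of what Lemma~\ref{lem:1a} already establishes, the only genuinely new feature is the sign sequence $s(i)$ that admits $\cW_{2}$ (with $s(i)=(-1)^{i+1}$, $u=\log(1/x)$), and condition \ref{u:3'} is automatic for the decreasing weights under consideration since $u'<0$. Your explicit Gauss-map computations ($\tau_{i,1}=i^{2}$, $\tau_{i,2}=(i+1)^{2}$ with $\kappa=2$, the ratio in \ref{G:l_i and r_i condition} equal to $2(N+\varphi)(N+1+\varphi)/(N(N+1))$, and $g(i)=2/i$) all agree with the paper's Lemma~\ref{lem:1a}.
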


The proof is completely analogous to the proof of Theorem~\ref{thm:included} and will be omitted.

\subsection{Noncomputability result}

We proceed with the main result of this section, which concerns computability of the function $\Phi$ as opposed to computability of real numbers.

\begin{thm}\label{thm:non_comp_exists}
There exists a number $x \in \Lambda$ for which $\Phi(\cdot)$ as defined above is not computable at $x$.
\end{thm}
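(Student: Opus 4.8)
The plan is to build a non-computable $x \in \Lambda$ by a diagonalization against all oracle Turing Machines, using Lemmas~\ref{lem:5.18}--\ref{lem:5.20} to make fine adjustments to the symbolic expansion of $x$ while keeping $\Phi(x)$ finite. Enumerate all oracle TMs $M_1^\phi, M_2^\phi, \ldots$ together with a fixed input, say $n=1$ and target precision $2^{-10}$ (any fixed choice works). Along the way I would maintain a nested sequence of initial segments $I_1 \subset I_2 \subset \cdots$, all ending in a long block of $1$'s, producing candidate numbers $\gamma_k = [I_k, 1,1,\ldots]$ whose $\Phi$-values are computable by Proposition~\ref{prop:computes_Brjuno_value}. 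The limit $x = \lim \gamma_k$ will be the desired point, and the key invariant will be a lower bound of the form $\Phi(\beta) > \Phi(\gamma_k) - 2^{-k}$ for every extension $\beta$ of $I_k$ (exactly the estimate provided by Lemma~\ref{lem:5.19}/Lemma~\ref{lem:uniformly_compute_approx}), which guarantees that later adjustments cannot spoil earlier ones and that $\Phi(x) < \infty$.

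The diagonalization step at stage $k$ runs as follows. We would like to ensure that if $M_k^\phi$ ever outputs a dyadic $d$ when fed an oracle for $x$, then $d$ is \emph{not} within $2^{-10}$ of the true value $\Phi(x)$. Since we do not yet know $x$, we instead simulate $M_k$ on the oracle determined by our current candidate $\gamma_{k-1}$ (whose tail is all $1$'s, hence is a computable real with a computable $\Phi$-value) for a bounded number of steps; $M_k$ reads only finitely many digits, all of which already appear in $I_{k-1}$ followed by $1$'s. If the simulation halts and outputs some $d$, then by Lemma~\ref{lem:5.18} we can extend $I_{k-1}$ to $I_k$ so that $\Phi(\gamma_k)$ is pushed up by a prescribed amount $\delta_k$ chosen large enough that $|\Phi(\gamma_k) - d| > 2^{-10} + 2^{-k}$; combined with the Lemma~\ref{lem:5.19} lower bound on all further extensions, this forces $|\Phi(x) - d| > 2^{-10}$, so $M_k$ fails. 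If the simulation does not halt within the allotted budget, we do nothing (extend $I_{k-1}$ trivially by appending $1$'s up to length $k$), since a non-halting machine is automatically not a witness to computability. Crucially, the digits of $I_{k-1}$ we have already committed to are never changed, so the simulation of $M_k$ on $\gamma_{k-1}$ agrees with its behavior on $x$ up to the point where it either halts or exceeds its budget.

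Finally, I would verify that $x = \lim \gamma_k$ is well-defined (each stage appends at least one digit), that $\Phi(x) < \infty$ (take limits in the invariant $\Phi(\beta) > \Phi(\gamma_k) - 2^{-k}$ with $\beta = x$, together with the upper estimate $\Phi(x) \le \lim \Phi(\gamma_k)$ exactly as in Lemma~\ref{lem:limit_equals_y}, so $\lim \Phi(\gamma_k) = \Phi(x) < \infty$ provided we also keep $\Phi(\gamma_k)$ bounded — which we arrange by making $\sum_k \delta_k$ converge), and that no $M_k$ computes $\Phi$ at $x$. The main obstacle is the interplay between the diagonalization increments $\delta_k$ and convergence: each $\delta_k$ must be large enough to escape the $2^{-10}$-ball around the adversary's output $d$, yet $\sum \delta_k$ must converge so that $\Phi(x)$ stays finite. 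This is resolved by noting that $d$ lies within $2^{-10}$ of $\Phi(\gamma_{k-1})$ only in the "bad" case we need to defeat; in that case a single bounded push of size, say, $2^{-10+2} = 2^{-8}$ (independent of $k$) already moves $\Phi$ out of the ball, and we need only finitely many such pushes total if we instead diagonalize against machine $M_k$ at a precision that shrinks with $k$, or — cleaner — we fix the precision but observe that once $M_k$ has been defeated at stage $k$ its output is wrong by a definite margin that the $2^{-j}$ tails ($j > k$) cannot erode, so the $\delta_k$ corresponding to genuinely bad machines can be taken to form a summable sequence by defeating $M_k$ with a push of size $2^{-k}\cdot c$ for a suitable constant $c$ and a correspondingly shrinking target precision, recovering non-computability at $x$ in the single-point sense of Theorem~\ref{thm:non_comp_exists}. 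A short argument that this single-point failure is exactly what the theorem claims completes the proof.
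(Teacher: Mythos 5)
There is a genuine gap, and in fact two of them.

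First and most critically, your proof relies on machinery that is not available under the hypotheses in force for Theorem~\ref{thm:non_comp_exists}. You invoke Lemmas~\ref{lem:5.19} and \ref{lem:5.20}, Proposition~\ref{prop:computes_Brjuno_value}, and Lemma~\ref{lem:uniformly_compute_approx}; all of these require the full assumption set of Section~\ref{section:Statements} — in particular computability of $G$ (assumption \ref{G:computability}), computability/left-computability of $u$ (\ref{u:4}, \ref{u:5}), and assumption \ref{G:l_i and r_i condition squared} on $G$. Section~\ref{section:Generalized non-computability result} explicitly drops \ref{G:l_i and r_i condition squared}, \ref{G:computability}, and all of \ref{u:2}--\ref{u:5}, keeps only \ref{u:1} and the weakened \ref{u:3'}, and allows a sign factor $s(i)$. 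The paper even flags that only Lemma~\ref{lem:5.18} is proven under these weaker assumptions. Your invariant $\Phi(\beta) > \Phi(\gamma_k) - 2^{-k}$ for all extensions $\beta$ comes from Lemma~\ref{lem:5.19} and therefore simply isn't available here; likewise, ``whose $\Phi$-values are computable by Proposition~\ref{prop:computes_Brjuno_value}'' has no footing when $G$ and $u$ are not assumed computable. The paper's actual proof avoids all of this: it tracks, at each step $i$, a cutoff $n(i)$ and the partial sums $f(i,k)$, using ordinary continuity of the finite sums in $x$ (conditions \ref{general_item8} and \ref{general_item9}) to control $\Phi(x_\infty)$ and show $\Phi(x_\infty)=\lim_i\Phi(x_i)<\infty$, with no appeal to Lemma~\ref{lem:5.19}/\ref{lem:5.20} or to any computability of $\Phi$.

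Second, you conflate ``digits agree'' with ``oracle agrees.'' You write that since the committed digits of $I_{k-1}$ are never changed, the simulation of $M_k$ on $\gamma_{k-1}$ agrees with its behavior on $x$. But an oracle TM queries a dyadic-approximation oracle $\phi$ with $|\phi(n)-x|<2^{-n}$, not a digit tape; two points of $\Lambda$ sharing a long initial symbolic segment still have different dyadic approximations once $n$ exceeds some threshold. To make the simulation argument rigorous you must, as the paper does, (a) make the eventual perturbation small in \emph{metric} terms — $|x_i - x_{i-1}| < 2^{-(k_i+1)}$ using Corollary~\ref{cor:cont_frac_continuity_weak} — where $k_i$ bounds every query parameter the relevant machines can make, and (b) explicitly build a coherent family of oracles $\phi_0, \phi_1, \ldots, \phi_\infty$ that agree on inputs $1,\ldots,k_j$ for all $j\le i$, and verify at the end that $\phi_\infty$ is a genuine oracle for $x_\infty$. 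Your sketch omits all of this bookkeeping, and it is not cosmetic.

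There are two lesser issues worth mentioning. Fixing a single input $n$ (your ``$n=1$, precision $2^{-10}$'') and doing a one-shot bounded-budget simulation does not diagonalize against TMs that halt only after the budget; the paper instead non-constructively selects at step $i$ the first machine (past index $l_{i-1}$) that actually computes $\Phi(x_{i-1})$, and uses margins $\varepsilon_k$ to verify that the earlier defeated machines remain defeated after the small perturbation. And your fix for the convergence/precision tension — shrinking precision with $k$ so the pushes are summable — is the right idea (the paper uses $\hat a_i$ for exactly this), but as written it is too vague to constitute a proof; in particular you must also ensure the push is small enough relative to \emph{every} previously established error margin $\varepsilon_k$, which is why the paper takes $2^{-\hat n_i + 2} < \hat\varepsilon_i / 2^i$.
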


Without loss of generality, we will assume in what follows that the sign term  $s(i) = 1$ infinitely often (otherwise, just replace $\Phi$ with $-\Phi$).

As before, it will be instructive to first go over the strategy of the proof. This outline is rough and is not fully logically sound, however it captures the main idea of the argument. To prove a function is non-computable at a single point $x$, it suffices to enumerate all oracle TMs $M^{\phi}_{i}$, $i \in \mathbb{N}$ (recall that there are countably many oracle TMs), and show that if $\phi$ is any oracle of $x$ then $M^{\phi}_{i}$ does not approximate $\Phi(x)$ arbitrarily well.

We start with $x_{0} = [1, 1, 1, \ldots]$ and the first TM $M^{\phi}_{n_{1}}$ in our enumeration which computes $x_{0}$. If any of the digits $a_{j}$ in the symbolic representation of $x_{0}$ are changed to some $N \in \mathbb{Z}^{+}$, as $N \to \infty$ the series $\Phi(x)$ diverges. However, if we change some digit $a_{j}$ far enough in the representation of $x_{0}$, for any $N$ the new value of $x_{0}$ changes by at most some fixed small amount $\varepsilon_{j}$ which goes to $0$ as $j \to \infty$. So, the idea is define $x_{1}$ from $x_{0}$ by changing $a_{j_{1}}$ for large enough $j_{1}$ to some large enough $N_{1}$, such that if $M^{\phi}_{n_{1}}$ is given an oracle for $x_{1}$ then it does not properly compute $\Phi(x_{1})$, which in some sense "fools" the oracle TM $M^{\phi}_{n_{1}}$. To fool the machine $M^{\phi}_{n_{2}}$ we then change a digit $j_{2}>j_{1}$ sufficiently far in the symbolic representation of $x_{1}$ to a large $N_{2}$ to get $x_{2}$, in such a way that neither $M^{\phi}_{n_{2}}$ nor any other $M^{\phi}_{k}$ for $k < n_{2}$ properly compute $\Phi(x_{2})$. Continuing in this manner we will arrive at a limiting number $x_{\infty} \in \Lambda$, with $\Phi(x_{\infty}) < \infty$ and such that none of the oracle TMs $M^{\phi}_{i}$ in our list properly compute $x_{\infty}$.

As in the proof of Theorem 4.3, we will need to carefully control the value of $\Phi(x)$ from the symbolic expansion of $x$. For the proof of Theorem~\ref{thm:non_comp_exists} we only need Lemma~\ref{lem:5.18} from the previous section, which is proven in the appendix under the weaker assumptions on $\Phi$ used in this section.

For the below proofs, we will say $\Phi(x)$ is computable at $x$ if there exists a Turing Machine $M^{\phi}$ such that if $\phi$ is an oracle for $x$, then on input $n$, $M^{\phi}$ outputs some $y'$ for which $|\Phi(x)-y'| \leq 2^{-n}$. This definition uses "$\leq$" instead of the "$<$" which is used in the definition given in section \ref{section:Preliminaries}, but it is easy to see that the two definitions are equivalent.

Before starting the proof, we need the following elementary fact.

\begin{lem}\label{cont_frac_continuity}
Write any number in $\Lambda$ as $\omega = [a_{1}, a_{2}, \ldots]$. For any $\varepsilon > 0$, there is an $L>0$ for which $n>L$ implies that for any sequence of natural numbers $(N_{0}, N_{1}, \ldots)$,
\begin{equation*}
    |\omega - [a_{1}, a_{2}, \ldots, a_{n-1}, N_{0}, N_{1}, \ldots]| < \varepsilon.
\end{equation*}
\end{lem}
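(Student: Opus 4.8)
The statement is: for $\omega = [a_1, a_2, \ldots] \in \Lambda$ and any $\varepsilon > 0$, there is $L$ such that $n > L$ implies that for any sequence of natural numbers, $|\omega - [a_1, \ldots, a_{n-1}, N_0, N_1, \ldots]| < \varepsilon$.

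Key observation: both $\omega$ and the modified number lie in $\Lambda$, and they share the initial symbolic segment $[a_1, \ldots, a_{n-1}]$. So both belong to the cylinder set $J_{a_1, \ldots, a_{n-1}} := \{x \in \Lambda : x = [a_1, \ldots, a_{n-1}, \ast, \ast, \ldots]\}$, which equals $(G^{n-1})^{-1}_{a_1,\ldots,a_{n-1}}(\Lambda)$ where $(G^{n-1})^{-1}_{a_1,\ldots,a_{n-1}}$ denotes the inverse branch $G_{a_1}^{-1} \circ G_{a_2}^{-1} \circ \cdots \circ G_{a_{n-1}}^{-1}$. Hence $|\omega - [a_1, \ldots, a_{n-1}, N_0, N_1, \ldots]|$ is bounded by the diameter of this cylinder.

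The plan:

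First, I would invoke assumption \ref{G:expansivity}: since $|(G^\kappa)'| > \tau > 1$ uniformly on $(s_0, s_1)$, each inverse branch $G_i^{-1}$ is a contraction of a sort, and the composition $G_{a_1}^{-1}\circ\cdots\circ G_{a_{n-1}}^{-1}$ contracts $(s_0,s_1)$ by a factor that is at most $\tau^{-\lfloor (n-1)/\kappa \rfloor}$ (each full block of $\kappa$ applications contracts by at least $\tau$ by the mean value theorem). Therefore the diameter of the cylinder $J_{a_1,\ldots,a_{n-1}}$ is at most $(s_1 - s_0)\,\tau^{-\lfloor (n-1)/\kappa \rfloor}$.

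Then, given $\varepsilon > 0$, I choose $L$ large enough that $(s_1 - s_0)\,\tau^{-\lfloor (n-1)/\kappa\rfloor} < \varepsilon$ for all $n > L$. Since both $\omega$ and the modified number lie in this cylinder, their distance is at most its diameter, hence less than $\varepsilon$. This $L$ depends on $(a_1, \ldots)$ only through the fixed number $\omega$ (in fact not even on $\omega$, only on $\varepsilon$, $\tau$, $\kappa$, and $s_1 - s_0$), which is consistent with the statement.

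I do not expect any serious obstacle here; this is precisely the statement that the cylinders of a uniformly expanding full-shift system shrink geometrically, which is essentially the content of the topological conjugacy to the full shift noted in the text right after the assumptions on $G$. The one point requiring a line of care is that the contraction estimate is phrased for $G^\kappa$ rather than $G$ itself, so one applies the mean value theorem block-by-block: writing $n - 1 = q\kappa + r$ with $0 \le r < \kappa$, the branch factors through $q$ applications of an inverse branch of $G^\kappa$ (each contracting by $\ge \tau$) composed with at most $\kappa - 1$ further inverse branches of $G$ (which are non-expanding since $|G'| > 1$ by \ref{G:expansivity}), giving the bound $\tau^{-q}(s_1 - s_0)$. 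Choosing $L = \kappa\bigl(1 + \log_\tau\frac{s_1-s_0}{\varepsilon}\bigr)$ suffices.
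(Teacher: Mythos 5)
Your proposal is correct and follows essentially the same route as the paper: both observe that $\omega$ and the modified number lie in the common cylinder $(G_{a_1}^{-1}\circ\cdots\circ G_{a_{n-1}}^{-1})((s_0,s_1))$, bound its diameter by grouping the inverse branches into blocks of length $\kappa$ (each contracting by at least $\tau^{-1}$ via the mean value theorem, with the leftover branches being non-expanding since $|G'|>1$), and choose $L$ on the order of $\kappa\log_\tau\frac{s_1-s_0}{\varepsilon}$. The only cosmetic difference is that the paper fixes $L=\kappa M$ and takes the first $M$ blocks from the front, while you write $n-1=q\kappa+r$; both yield the same geometric bound.
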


\begin{proof}
Let $M>0$ be large enough that $\dfrac{s_{1}-s_{0}}{\tau^{M}} < \varepsilon$ and let $L = \kappa \cdot M$, where $\kappa, \tau$ are from assumption \ref{G:expansivity} on $G$. Noting that
\begin{equation*}
    \omega, [a_{1}, a_{2}, \ldots, a_{n-1}, N_{0}, N_{1}] \in (G_{a_{1}}^{-1} \circ G_{a_{2}}^{-1} \circ \cdots \circ G_{a_{n-1}}^{-1})((s_{0}, s_{1}))
\end{equation*}
and $|(G_{a_{i+\kappa-1}} \circ G_{a_{i+\kappa-2}} \circ \cdots \circ G_{a_{i}})'(\theta)| > \tau > 1 \implies |(G_{a_{i}}^{-1} \circ G_{a_{i+1}}^{-1} \circ \cdots \circ G_{a_{i+\kappa-1}}^{-1})'(\theta)| < \frac{1}{\tau} < 1$, as well as $|(G_{a_{i}}^{1})'(\theta)| < 1$ from \ref{G:expansivity}, we have
\begin{align*}
    &\text{length}((G_{a_{1}}^{-1} \circ \cdots \circ G_{a_{n-1}}^{-1})((s_{0}, s_{1}))) \\
    &\leq \left( \prod_{i=0}^{M-1} \sup_{\theta \in \Lambda} |(G_{a_{i\kappa+1}}^{-1} \circ \cdots \circ G_{a_{i\kappa+\kappa}}^{-1})'(\theta)| \right) \cdot \left( \prod_{j=L+1}^{n-1} \sup_{\theta \in \Lambda} |(G_{a_{j}}^{-1})'(\theta)| \right) \cdot (s_{1} - s_{0}) < \dfrac{s_{1}-s_{0}}{\tau^{M}} < \varepsilon.
\end{align*}

\end{proof}

In particular, we have the following:

\begin{cor}\label{cor:cont_frac_continuity_weak}
For $\omega = [a_{1}, a_{2}, \ldots]$ as above, for any $\varepsilon > 0$, there is an $L>0$ for which $n>L$ implies that $\forall N \in \mathbb{N}$,
\begin{equation*}
    |\omega - [a_{1}, a_{2}, \ldots, a_{n-1}, N, a_{n+1}, \ldots]| < \varepsilon.
\end{equation*}
\end{cor}

Before proceeding to the proof of the main result, we first define some notation. For any $x_{i} = [a_{1}^{i}, a_{2}^{i}, \ldots]$ let $\eta_{k}^{i}=[a_{k}^{i}, a_{k+1}^{i}, \ldots]$, noting that
\begin{equation*}
    \Phi(x_{i})=\sum_{n=1}^{\infty} s(i) \cdot \left( \eta_{0}^{i} \eta_{1}^{i} \cdots \eta_{n-1}^{i} \right)^{\nu} \cdot u(\eta_{n}^{i})
\end{equation*}
where $\nu, \rho >0$ and $s(i) \in \{-1, 1\}$ with $s(i) = 1$ infinitely often. Let
\begin{equation*}
    f(i,k)=\sum_{n=1}^{k}s(i) \cdot \left( \eta_{0}^{i} \eta_{1}^{i} \cdots \eta_{n-1}^{i} \right)^{\nu} \cdot u(\eta_{n}^{i}),
\end{equation*}
noting that $\lim_{k \to \infty}f(i, k) = \Phi(x_{i})$.

\subsection{Proof of Theorem~\ref{thm:non_comp_exists}}

We will first show inductively that there exist:

\begin{itemize}
    \item nested initial segments $I_{1} \subseteq I_{2} \subseteq \ldots$, where each $I_{i}$ has length $p_{i}$;
    \item for each $i = 1, 2, \ldots$, positive integers $N^{i}$ and $m_{i}$;
    \item positive integers $k_{1} < k_{2} < \cdots$ and $l_{1} < l_{2} < \cdots$ and $\hat{a}_{1} < \hat{a}_{2} < \cdots$ and $n(0) < n(1) < n(2) < \cdots$, positive real numbers $\hat{\varepsilon}_{1} > \hat{\varepsilon}_{2} > \cdots$, and oracles $\phi_{0}, \phi_{1}, \ldots$;
\end{itemize}

such that if we let $x_{i} = [I_{i}, 1, \ldots, 1, N^{i}, 1, \ldots]$ for $i \in \mathbb{Z}^{+}$, where $N^{i}$ is in the $(m_{i}+p_{i})$-th position, then we have the following:

\begin{enumerate}[label={(\arabic*)}]
    \item $\phi_{i}$ is an oracle for $x_{i}$ such that $|\phi_{i}(n) - x_{i}| < 2^{-(n+1)}$ for all $n$. \label{general_item1}
    \item $\phi_{i}$ agrees with the oracle $\phi_{j-1}$ on inputs $1, 2, \ldots, k_{j}$ for $j=1, 2, \ldots, i$. \label{general_item2}
    \item Running $M_{l_{i}}^{\phi_{i}}(1), M_{l_{i}}^{\phi_{i}}(2), \ldots, M_{l_{i}}^{\phi_{i}}(\hat{a}_{i})$ queries $\phi_{i}$ with parameters not exceeding $k_{i}$. \label{general_item4}
    \item Running $M_{l_{i}}^{\phi_{i}}(\hat{a}_{i})$ yields a number $A_{l_{i}}$ for which \label{general_item5}
    \begin{equation*}
        A_{l_{i}} + 2^{-\hat{a}_{i}} \leq \Phi(x_{i-1}) + 2^{-\hat{a}_{i}+1} < \Phi(x_{i}) < \Phi(x_{i-1}) + 2 \cdot 2^{-\hat{a}_{i}+1}.
    \end{equation*}
    \item Running $M_{l_{j}}^{\phi_{i}}(\hat{a}_{j})$ for $j = 1, 2, \ldots, i$ yields a number $B_{l_{j}}$ for which $B_{l_{j}} + 2^{-\hat{a}_{j}} < \Phi(x_{i})$. \label{general_item6}
    \item The TMs $M^{\phi_{i}}_{k}$ for $k \in \{1, \ldots, l_{i}\}$ all do not properly compute $\Phi(x_{i})$; in particular, they all compute $\Phi(x_{i})$ with an error of at least $\hat{\varepsilon}_{i}$. \label{general_item7}
    \item For $k\geq n(i)$, we have $|f(i,k)-\Phi(x_{i})| < 2^{-i}$. \label{general_item8}
    \item For $k = 1, 2, \ldots, n(i-1)$, we have $|f(i,k)-f(i-1,k)|<2^{-i}$. \label{general_item9}
\end{enumerate}

\subsubsection{Base case}

There are countably many oracle Turing Machines $M^{\phi}$, where $\phi$ represents an oracle for $x$, so we can order them as $M_{1}^{\phi}, M_{2}^{\phi}, \ldots$. Let $x_{0} = [1, 1, 1, \ldots]$. Given an oracle $\phi_{0}$ for $x_{0}$ such that $|\phi_{0}(n') - x_{0}| < 2^{-(n'+1)}$ for all $n'$, let $M_{l_{1}}^{\phi_{0}}$ be the first TM to compute $\Phi(x_{0})$ (if no such TM exists, we are done). Since this is the first such TM, all of $M_{1}^{\phi_{0}}, M_{2}^{\phi_{0}}, \ldots, M_{l_{1}-1}^{\phi_{0}}$ do not properly compute $\Phi(x_{0})$, so there are integers $a_{1}, a_{2}, \ldots, a_{l_{1}-1}$ and small positive real numbers $\varepsilon_{1}, \varepsilon_{2}, \ldots, \varepsilon_{l_{1}-1}$ for which $M_{k}^{\phi_{0}}(a_{k})$ outputs some number $A_{k}$ with $|A_{k} - \Phi(x_{0})| > 2^{-a_{k}} + \varepsilon_{k}$. Set $\hat{\varepsilon}_{1} = \min(\varepsilon_{1}, \ldots, \varepsilon_{l_{1}-1})$; choose $\hat{n}_{1}$ large enough so that $2^{-\hat{n}_{1}+2} < \hat{\varepsilon}_{1}/2$, and set $\hat{a}_{1} = \max(a_{1}, \ldots, a_{l_{1}-1}, \hat{n}_{1})$.

Run $M_{l_{1}}^{\phi_{0}}(\hat{a}_{1})$ with the oracle $\phi_{0}$. This TM outputs a number $A_{l_{1}}$ for which $|A_{l_{1}} - \Phi(x_{0})| \leq 2^{-\hat{a}_{1}}$. Since the computation is performed in finite time, there is a $k_{1}>0$ such that $\phi_{0}$ is only queried with parameters not exceeding $k_{1}$. We assume $k_{1}$ is large enough that the computations $M_{l_{1}}^{\phi_{0}}(k)$ query $\phi_{0}$ for parameters not exceeding $k_{1}$ for $k = 1, 2, \ldots, \hat{a}_{1}-1$. Setting $n(0) = 1$, we additionally make $k_{1}$ large enough such that for any $x_{1}$ with $|x_{1} - x_{0}| < 2^{-(k_{1}+1)}$ we have
\begin{equation*}
    |f(1, 1)-f(0, 1)| = \left| u(\eta_{1}^{1}) - u(\eta_{1}^{0}) \right| = \left| u(x_{1}) - u(x_{0}) \right| < 2^{-1}
\end{equation*}
by continuity of $u(\cdot)$. Hence \ref{general_item9} is satisfied.

Now for any $x_{1}$ such that $|x_{1} - x_{0}| < 2^{-(k_{1}+1)}$, $\phi_{0}$ is a valid oracle for $x_{1}$ up to parameter value $k_{1}$. In particular, we can create an oracle $\psi$ for $x_{1}$ which agrees with $\phi_{0}$ on $1, 2, \ldots, k_{1}$. Then the execution of $M_{l_{1}}^{\phi_{0}}(\hat{a}_{1})$ will be identical to that of $M_{l_{1}}^{\psi}(\hat{a}_{1})$, so it will output the same value $A_{l_{1}}$ which is a $2^{-\hat{a}_{1}}$-approximation for $\Phi(x_{0})$.


Applying Corollary~\ref{cor:cont_frac_continuity_weak} with $\varepsilon = 2^{-(k_{1} +1)}$, we get $L_{1}>0$ such that
\begin{equation*}
    \forall m_{1}>L_{1}, \forall N_{1} \in \mathbb{N}, |\beta^{N_{1}} - x_{0}| < 2^{-(k_{1}+1)},
\end{equation*}
where $\beta^{N_{1}}$ has all ones except an $N_{1}$ at the $(m_{1}+1)$-th position. Applying Lemma~\ref{lem:5.18} with $I_{1} = [1]$ and $\varepsilon = 2^{-\hat{a}_{1}+1}$, and making sure the integer $m=m_{1}$ we get from this Lemma satisfies $m_{1}>L_{1}$, we get some $\beta^{N_{1}} = \beta_{1}^{N_{1}}$ for which
\begin{equation*}
    \exists N_{1} \in \mathbb{N} \text{ such that } |\beta_{1}^{N_{1}} - x_{0}| < 2^{-(k_{1}+1)} \hspace{10px}[\text{since } m_{1}>L_{1}]
\end{equation*}
yet
\begin{equation*}
    \Phi(x_{0}) + 2^{-\hat{a}_{1}+1} < \Phi(\beta_{1}^{N_{1}}) < \Phi(x_{0}) + 2 \cdot 2^{-\hat{a}_{1}+1},
\end{equation*}
where $\beta_{1}^{N_{1}}$ has all ones except an $N_{1}$ at the $(m_{1}+1)$-th position. Let $x_{1} = \beta_{1}^{N_{1}}$, and let $\phi_{1} = \psi$ be the oracle for $x_{1}$ which agrees with $\phi_{0}$ on $1, 2, \ldots, k_{1}$ and which additionally satisfies $|\phi_{1}(n)-x_{1}|<2^{-(n+1)}$ for all $n$. This oracle $\phi_{1}$ satisfies \ref{general_item1} and \ref{general_item2}. As previously stated, the execution of $M_{l_{1}}^{\phi_{0}}(\hat{a}_{1})$ is identical to that of $M_{l_{1}}^{\phi_{1}}(\hat{a}_{1})$, so the output will be the same number $A_{l_{1}}$ such that $|A_{l_{1}} - \Phi(x_{0})| \leq 2^{-\hat{a}_{1}}$. By construction, $M_{l_{1}}^{\phi_{1}}(1), M_{l_{1}}^{\phi_{1}}(2), \ldots, M_{l_{1}}^{\phi_{1}}(\hat{a}_{1})$ only query $\phi_{1}$ with parameters not exceeding $k_{1}$, satisfying \ref{general_item4}. But then by the above work 
\begin{equation*}
    A_{l_{1}} + 2^{-\hat{a}_{1}} \leq \Phi(x_{0}) + 2^{-\hat{a}_{1}} + 2^{-\hat{a}_{1}} = \Phi(x_{0}) + 2^{-\hat{a}_{1}+1} < \Phi(x_{1}),
\end{equation*}
satisfying \ref{general_item5} and also \ref{general_item6} by taking $B_{l_{1}} = A_{l_{1}}$. Thus $M_{l_{1}}^{\phi_{1}}(\hat{a}_{1})$, where $\phi_{1}$ is an oracle for $x_{1}$, does not approximate $\Phi(x_{1})$ with precision $2^{-\hat{a}_{1}}$, so this TM does not properly compute this number. Additionally, note that $M_{k}^{\phi_{1}}$ also does not compute $\Phi(x_{1})$ for $k = 1, 2, \ldots, l_{1}-1$. Running $M_{k}^{\phi_{1}}(a_{k})$ is identical to running $M_{k}^{\phi_{0}}(a_{k})$ by our choice of $k_{1}$ in the construction of $\phi_{1}$, so $M_{k}^{\phi_{1}}(a_k)$ outputs a number $B_{k}$ for which $|B_{k} - \Phi(x_{0})| > 2^{-a_{k}} + \varepsilon_{k}$. Since 
\begin{equation*}
    |\Phi(x_{1})-\Phi(x_{0})| < 2^{-\hat{a}_{1}+2} < \frac{\hat{\varepsilon}_{1}}{2} \leq \frac{\varepsilon_{k}}{2},
\end{equation*}
we have
\begin{equation*}
    |B_{k}-\Phi(x_{1})| \geq |B_{k}-\Phi(x_{0})| - |\Phi(x_{0})-\Phi(x_{1})| > (2^{-a_{k}} + \varepsilon_{k}) - \frac{\varepsilon_{k}}{2} = 2^{-a_{k}} + \frac{\varepsilon_{k}}{2} > 2^{-a_{k}},
\end{equation*}
satisfying \ref{general_item7}. To show \ref{general_item8}, note that $\lim_{k \to \infty}f(1,k) = \Phi(x_{1})$ is finite, so there is some large enough $n(1)$ for which $k \geq n(1)$ implies $|f(1, k) - \Phi(x_{1})| < 2^{-1}$ as required.

\subsubsection{Induction step}

Now inductively, suppose there exist the following:

\begin{itemize}
    \item nested initial segments $I_{1} \subseteq \ldots \subseteq I_{i-1}$, where each $I_{j}$ has length $p_{j}$;
    \item for each $j = 1, 2, \ldots, i-1$, positive integers $N^{j}$ and $m_{j}$;
    \item positive integers $k_{1} < k_{2} < \cdots < k_{i-1}$ and $l_{1} < l_{2} < \cdots < l_{i-1}$ and $\hat{a}_{1} < \hat{a}_{2} < \cdots < \hat{a}_{i-1}$ and $n(0) < n(1) < \cdots < n(i-1)$, positive real numbers $\hat{\varepsilon}_{1} > \hat{\varepsilon}_{2} > \cdots > \hat{\varepsilon}_{i-1}$, and oracles $\phi_{0}, \phi_{1}, \ldots, \phi_{i-1}$;
\end{itemize}

such that if we let $x_{j} = [I_{j}, 1, \ldots, 1, N^{j}, 1, \ldots]$ for $j = 1, 2, \ldots, i-1$, where $N^{j}$ is in the $(m_{j}+p_{j})$-th position, then we have the following:

\begin{enumerate}[label={(\arabic*)}]
    \item $\phi_{j}$ is an oracle for $x_{j}$ such that $|\phi_{j}(n) - x_{j}| < 2^{-(n+1)}$ for all $n$. \label{IH_item1}
    \item $\phi_{i-1}$ agrees with the oracle $\phi_{j-1}$ on inputs $1, 2, \ldots, k_{j}$. \label{IH_item2}
    \item Running $M_{l_{j}}^{\phi_{j}}(1), M_{l_{j}}^{\phi_{j}}(2), \ldots, M_{l_{j}}^{\phi_{j}}(\hat{a}_{j})$ queries $\phi_{j}$ with parameters not exceeding $k_{j}$. \label{IH_item4}
    \item Running $M_{l_{j}}^{\phi_{j}}(\hat{a}_{j})$ yields a number $A_{l_{j}}$ for which \label{IH_item5}
    \begin{equation*} 
        A_{l_{j}} + 2^{-\hat{a}_{j}} \leq \Phi(x_{j-1}) + 2^{-\hat{a}_{j}+1} < \Phi(x_{j}) < \Phi(x_{j-1}) + 2 \cdot 2^{-\hat{a}_{j}+1}.
    \end{equation*}
    \item Running $M_{l_{j}}^{\phi_{i-1}}(\hat{a}_{j})$ yields a number $B_{l_{j}}$ for which $B_{l_{j}} + 2^{-\hat{a}_{j}} < \Phi(x_{i-1})$. \label{IH_item6}
    \item The TMs $M^{\phi_{i-1}}_{k}$ for $k \in \{1, \ldots, l_{i-1}\}$ all do not properly compute $\Phi(x_{i-1})$; in particular, they all compute $\Phi(x_{i-1})$ with an error of at least $\hat{\varepsilon}_{i-1}$. \label{IH_item7}
    \item For $k\geq n(i-1)$, we have $|f(i-1,k)-\Phi(x_{i-1})| < 2^{-(i-1)}$. \label{IH_item8}
    \item For $k = 1, 2, \ldots, n(i-2)$, we have $|f(i-1,k)-f(i-2,k)|<2^{-(i-1)}$. \label{IH_item9}
\end{enumerate}

Let $M_{l_{i}}^{\phi_{i-1}}$ be the first TM, with some $l_{i}>l_{i-1}$ and with $\phi_{i-1}$ being the oracle for $x_{i-1}$ from the induction hypothesis, which computes $\Phi(x_{i-1})$. We want to find an initial segment $I_{i} \supseteq I_{i-1}$ of some length $p_{i}$, positive integers $N^{i}, m_{i}, k_{i} > k_{i-1}, \hat{a}_{i} > \hat{a}_{i-1}, n(i)>n(i-1)$, and an oracle $\phi_{i}$ for $x_{i}$ such that \ref{IH_item1}-\ref{IH_item9} are satisfied for $i$ instead of $i-1$. It would follow from \ref{IH_item6} and \ref{IH_item7} that none of $M_{1}^{\phi_{i}}, M_{2}^{\phi_{i}}, \ldots, M_{l_{i}}^{\phi_{i}}$ properly compute  $\Phi(x_{i})$.

Since $M_{l_{i}}^{\phi_{i-1}}$ is the first such TM with $l_{i}>l_{i-1}$, combined with \ref{IH_item7} from the induction hypothesis we get that all of $M_{1}^{\phi_{i-1}}, M_{2}^{\phi_{i-1}}, \ldots, M_{l_{i}-1}^{\phi_{i-1}}$ do not properly compute $\Phi(x_{i-1})$. So, there are integers $a_{1}, a_{2}, \ldots, a_{l_{i}-1}$ and small positive real numbers $\varepsilon_{1}, \varepsilon_{2}, \ldots, \varepsilon_{l_{i}-1} < \hat{\varepsilon}_{i-1}$ for which $M_{k}^{\phi_{i-1}}(a_{k})$ outputs some number $A_{k}$ with $|A_{k} - \Phi(x_{i-1})| > 2^{-a_{k}} + \varepsilon_{k}$. Set $\hat{\varepsilon}_{i} = \min(\varepsilon_{1}, \ldots, \varepsilon_{l_{i}-1})$, choose $\hat{n}_{i}$ large enough so that $2^{-\hat{n}_{i}+2} < \hat{\varepsilon}_{i}/2^{i}$, and set \\
$\hat{a}_{i} = \max(a_{1}, \ldots, a_{l_{i}-1}, \hat{a}_{i-1}, \hat{n}_{i})$.

When run, the TM $M_{l_{i}}^{\phi_{i-1}}(\hat{a}_{i})$ outputs a number $A_{l_{i}}$ for which $|A_{l_{i}} - \Phi(x_{i-1})| \leq 2^{-\hat{a}_{i}}$. The computations $M_{l_{j}}^{\phi_{j}}(1), M_{l_{j}}^{\phi_{j}}(2), \ldots, M_{l_{j}}^{\phi_{j}}(\hat{a}_{i})$ are performed in finite time, so there is a $k_{i}>0$ such that $\phi_{i-1}$ is only queried with parameters not exceeding $k_{i}$ for all of these computations; we can make $k_{i}$ arbitrarily large, so assume $k_{i} > k_{i-1}$. We additionally make $k_{i}$ large enough such that for any $x_{i}$ with $|x_{i} - x_{i-1}| < 2^{-(k_{i}+1)}$ and any $k = 1, 2, \ldots, n(i-1)$, we have
\begin{align*}
    &|f(i, k)-f(i-1, k)| \\
    &= \left| \sum_{n=1}^{k} s(i) \cdot \left(\eta_{0}^{i} \eta_{1}^{i} \cdots \eta_{n-1}^{i}\right)^{\nu} \cdot u(\eta_{n}^{i}) - \sum_{n=1}^{k} s(i) \cdot \left(\eta_{0}^{i-1} \eta_{1}^{i-1} \cdots \eta_{n-1}^{i-1} \right)^{\nu} \cdot u(\eta_{n}^{i-1}) \right| < 2^{-i}.
\end{align*}
We can do this because the sums are finite and have continuous dependence on $x_{i}$, since $u$ and $G$ are $\mathcal{C}^{1}$ on the set $S$ (from the definition of $G$). Hence \ref{IH_item9} is satisfied.

Now, for any $x_{i}$ such that $|x_{i} - x_{i-1}| < 2^{-(k_{i}+1)}$, $\phi_{i-1}$ is a valid oracle for $x_{i}$ up to parameter value $k_{i}$. In particular, we can create an oracle $\psi$ for $x_{i}$ which agrees with $\phi_{i-1}$ on $1, 2, \ldots, k_{i}$ and so that $|\psi(n)-x_{i}|<2^{-(n+1)}$ for all $n$. Then the execution of $M_{l_{i}}^{\phi_{i-1}}(\hat{a}_{i})$ will be identical to that of $M_{l_{i}}^{\psi}(\hat{a}_{i})$, so it will output the same number $A_{l_{i}}$ which is a $2^{-\hat{a}_{i}}$-approximation for $\Phi(x_{i-1})$.

Applying Corollary~\ref{cor:cont_frac_continuity_weak} with $\varepsilon = 2^{-(k_{i} +1)}$, we get $L_{i}>0$ such that
\begin{equation*}
    \forall m_{i}>L_{i}, \forall N_{i} \in \mathbb{N}, |\beta^{N_{i}} - x_{i-1}| < 2^{-(k_{i}+1)},
\end{equation*}
where $\beta^{N_{i}} = [I_{i-1}, 1, \ldots, 1, N_{i-1}, 1, \ldots]$ agrees with $x_{i-1}$ at all entries except having an $N_{i-1}$ instead of a $1$ at the $(m_{i-1} + p_{i-1})$-th position. Applying Lemma~\ref{lem:5.18} with $I_{i} = [I_{i-1}, 1, \ldots, 1, N_{i-1}]$ (where $N_{i-1}$ is at the $(m_{i-1} + p_{i-1})$-th position) and $\varepsilon = 2^{-\hat{a}_{i}+1}$, and making sure the integer $m=m_{i}$ we get from this Lemma satisfies $m_{i}>L_{i}$, we get some $\beta^{N_{i}} = \beta_{i}^{N_{i}}$ for which
\begin{equation*}
    \exists N_{i} \in \mathbb{N} \text{ such that } |\beta_{i}^{N_{i}} - x_{i-1}| < 2^{-(k_{i}+1)} \hspace{10px}[\text{since } m_{i}>L_{i}]
\end{equation*}
yet
\begin{equation*}
    \Phi(x_{i-1}) + 2^{-\hat{a}_{i}+1} < \Phi(\beta_{i}^{N_{i}}) < \Phi(x_{i-1}) + 2 \cdot 2^{-\hat{a}_{i}+1}.
\end{equation*}
Let $x_{i} = \beta_{i}^{N_{i}}$, and let $\phi_{i} = \psi$ be the oracle for $x_{i}$ which agrees with $\phi_{i-1}$ on $1, 2, \ldots, k_{i}$. This $\phi_{i}$ satisfies \ref{IH_item1} and \ref{IH_item2}. As previously stated, the execution of $M_{l_{i}}^{\phi_{i-1}}(\hat{a}_{i})$ is identical to that of $M_{l_{i}}^{\phi_{i}}(\hat{a}_{i})$, so the output will be the same number $A_{l_{i}}$ such that $|A_{l_{i}} - \Phi(x_{i-1})| \leq 2^{-\hat{a}_{i}}$. By construction, $M_{l_{i}}^{\phi_{i}}(1), M_{l_{i}}^{\phi_{i}}(2), \ldots, M_{l_{i}}^{\phi_{i}}(\hat{a}_{i})$ only query $\phi_{i}$ with parameters not exceeding $k_{i}$, satisfying \ref{IH_item4}. But then by the above work we have 
\begin{equation*}
    A_{l_{i}} + 2^{-\hat{a}_{i}} \leq \Phi(x_{i-1}) + 2^{-\hat{a}_{i}} + 2^{-\hat{a}_{i}} = \Phi(x_{i-1}) + 2^{\hat{a}_{i}+1} < \Phi(x_{i}) < \Phi(x_{i-1}) + 2 \cdot 2^{-\hat{a}_{i}+1},
\end{equation*}
satisfying \ref{IH_item5}. Thus $M_{l_{i}}^{\phi_{i}}(\hat{a}_{i})$, where $\phi_{i}$ is an oracle for $x_{i}$, does not compute $\Phi(x_{i})$ with precision $2^{-\hat{a}_{i}}$, so it does not properly compute this number. But now since $\phi_{i}$ agrees with $\phi_{i-1}$ on $1, 2, \ldots, k_{i}$, by \ref{IH_item2} of the induction hypothesis we have that it agrees with $\phi_{j-1}$ on $1, 2, \ldots, k_{j}$ for $j = 1, 2, \ldots, i$. By \ref{IH_item4} of the induction hypothesis, running $M_{l_{j}}^{\phi_{j}}(\hat{a}_{j})$ queries $\phi_{j}$ with parameters not exceeding $k_{j} \leq k_{i}$, hence the execution of $M_{l_{j}}^{\phi_{j}}(\hat{a}_{j})$ is identical to that of $M_{l_{j}}^{\phi_{i}}(\hat{a}_{j})$ for all $j$. Therefore by \ref{IH_item5}, running $M_{l_{j}}^{\phi_{i}}(\hat{a}_{j})$ gives a number $B_{l_{j}} = A_{l_{j}}$ such that 
\begin{equation*}
    B_{l_{j}} + 2^{-\hat{a}_{j}} \leq \Phi(x_{j-1}) + 2^{\hat{a}_{j}+1} < \Phi(x_{j}) < \Phi(x_{j+1}) < \ldots < \Phi(x_{i}).
\end{equation*}
Hence \ref{IH_item6} is satisfied.

Now, note that $M_{k}^{\phi_{i}}$ also does not compute $\Phi(x_{i})$ properly for $k = 1, 2, \ldots, l_{i}-1$. Running $M_{k}^{\phi_{i}}(a_{k})$ is identical to running $M_{k}^{\phi_{i-1}}(a_{k})$ by our choice of $k_{i}$ in the construction of $\phi_{i}$, so $M_{k}^{\phi_{i}}(a_k)$ outputs a number $B_{k}$ for which $|B_{k} - \Phi(x_{i-1})| > 2^{-a_{k}} + \varepsilon_{k}$. Since 
\begin{equation*}
    |\Phi(x_{i})-\Phi(x_{i-1})| < 2^{-\hat{a}_{i}+2} < \frac{\hat{\varepsilon}_{i}}{2^{i}} \leq \frac{\varepsilon_{k}}{2},
\end{equation*}
we have
\begin{equation*}
    |A_{k}-\Phi(x_{i})| \geq |A_{k}-\Phi(x_{i-1})| - |\Phi(x_{i-1})-\Phi(x_{i})| > (2^{-a_{k}} + \varepsilon_{k}) - \frac{\varepsilon_{k}}{2} = 2^{-a_{k}} + \frac{\varepsilon_{k}}{2} > 2^{-a_{k}},
\end{equation*}
satisfying \ref{IH_item7}. Finally, it remains to show \ref{IH_item8} for $i$. Note that $\lim_{k \to \infty}f(i,k) = \Phi(x_{i})$ is finite, so there is some $n(i) > n(i-1)$ for which $k \geq n(i)$ implies $|f(i, k) - \Phi(x_{i})| < 2^{-i}$ as required. This completes the induction.

\subsubsection{Finalizing the argument}

Let $[a_{1}, a_{2}, \ldots] = x_{\infty} = \lim_{i \to \infty} x_{i}$. We claim that $\Phi(x_{\infty}) < \infty$ and $\Phi(x_{\infty})$ is not computable by any Turing Machine.

We first show that $\lim_{n \to \infty}\Phi(x_{i}) < \Phi(x_{0}) + 4 < \infty$. From \ref{general_item5} we have for all $i \in \mathbb{N}$ that
\begin{equation*}
    \Phi(x_{i}) < \Phi(x_{i-1}) + 2 \cdot 2^{-i+1},
\end{equation*}
hence
\begin{equation*}
    \Phi(x_{i}) < \Phi(x_{0}) + \sum_{j=1}^{i}2^{2} \cdot 2^{-j} \implies \sup_{i \in \mathbb{N}} \Phi(x_{i}) \leq \Phi(x_{0})+4.
\end{equation*}
We now show $\Phi(x_{\infty})$ is finite, and equals $\lim_{i\to\infty}\Phi(x_{i})$. For each $i$, the sequence $(f(j,n(i)))_{j=1}^{\infty}$ is Cauchy by \ref{general_item9}, hence it converges to some $f_{\infty}(n(i))$. It is clear that
\begin{equation*}
    f_{\infty}(n(i)) = \sum_{n=1}^{n(i)} s(i) \cdot \left( \eta_{0}^{\infty} \eta_{1}^{\infty} \cdots \eta_{n-1}^{\infty} \right)^{\nu} \cdot u(\eta_{n}^{\infty})
\end{equation*}
by definition of $x_{\infty}$ and by the way each successive $x_{j}$ was chosen. Since each $f(j, n(i)) \leq \Phi(x_{j})$, taking limits on both sides gives $f_{\infty}(n(i)) \leq \Phi(x_{0})+4$. Thus the sequence $(f_{\infty}(n(i)))_{i=1}^{\infty}$ is bounded from above, so the limit superior is finite. But now
\begin{equation*}
    \limsup_{i\to\infty}f_{\infty}(n(i)) = \limsup_{i\to\infty} \sum_{n=1}^{n(i)} s(i) \cdot \left( \eta_{0}^{\infty} \eta_{1}^{\infty} \cdots \eta_{n-1}^{\infty} \right)^{\nu} \cdot u(\eta_{n}^{\infty}) \geq \Phi(x_{\infty}),
\end{equation*}
showing that $\Phi(x_{\infty})$ is finite. Now we claim that $\Phi(x_{\infty}) = \lim_{i\to\infty}\Phi(x_{i})$. Let $\varepsilon>0$ be given.
\begin{itemize}
    \item Choose $i_{1}$ large enough so that \begin{equation*} i>i_{1} \implies |f_{\infty}(n(i))-\Phi(x_{\infty})| < \varepsilon/3. \end{equation*}
    \item Choose $i_{2}>i_{1}$ large enough so that $2^{-i_{2}}<\varepsilon/3$. By \ref{general_item8} we have \begin{equation*}
        i > i_{2} \implies |f(i, n(i))-\Phi(x_{i})| < \varepsilon/3.
    \end{equation*}
    \item Set $i_{3} = i_{2}+1$, so that $2^{-i_{3}+1} < \varepsilon/3$. Then for $i>i_{3}$, repeated application of \ref{general_item9} yields \begin{equation*}
        |f(i, n(i))-f_{\infty}(n(i))| \leq \sum_{j=i}^{\infty}|f(j+1, n(i))-f(j, n(i))| < \sum_{j=i}^{\infty}2^{-j} = 2^{-i+1}<\varepsilon/3.
    \end{equation*}
\end{itemize}
Taking $i>\max \{i_{1}, i_{2}, i_{3} \}$, we finally get
\begin{equation*}
    |\Phi(x_{\infty})-\Phi(x_{i})| \leq |\Phi(x_{\infty})-f_{\infty}(k)|+|f_{\infty}(n(i))-f(i,n(i))|+|f(i,n(i))-\Phi(x_{i})|< \varepsilon.
\end{equation*}
It remains to show $\Phi(x_{\infty})$ is not computable by any of the Turing machines $M^{\phi}_{i}$, where $\phi$ is an oracle for $x_{\infty}$. Let $\phi_{\infty}$ be the oracle for $x_{\infty}$ which agrees with $\phi_{i}$ on inputs $1, 2, \ldots, k_{i+1}$. This is a valid construction of an oracle by \ref{general_item2} and since $\lim_{i\to\infty}k_{i+1}=\infty$. To show $\phi_{\infty}$ is indeed an oracle for $x_{\infty}$, let $n \in \mathbb{Z}^{+}$ and set $i$ such that $n+1<k_{i}$. Since $n+1<k_{i}$ implies that $n+j+1<k_{i+j}$,
\begin{align*}
    |\phi_{\infty}(n)-x_{\infty}| & \leq |\phi_{\infty}(n)-x_{i}|+|x_{i}-x_{\infty}| \leq |\phi_{i}(n)-x_{i}| + \sum_{j=i}^{\infty}|x_{j+1}-x_{j}| \\
    &< 2^{-(n+1)} + \sum_{j=0}^{\infty}2^{-(k_{i+j}+1)} \leq 2^{-(n+1)} + \sum_{j=0}^{\infty}2^{-(n+j+2)} = 2^{-n},
\end{align*}
so $\phi_{\infty}$ is indeed an oracle for $x_{\infty}$. Now for a contradiction, suppose some TM $M_{j}^{\phi_{\infty}}$ computes $\Phi(x_{\infty})$. We have two cases:
\begin{enumerate}
\item $j = l_{i}$ for some $i$. Then $M_{l_{i}}^{\phi_{i}}(\hat{a}_{i})$ outputs $B_{i}$ for which $B_{i}+2^{-\hat{a}_{i}}<\Phi(x_{i})$ by \ref{general_item6}. This same number $B_{i}$ is output when running $M_{l_{i}}^{\phi_{\infty}}(\hat{a}_{i})$. But then \begin{equation*}
    B_{i}+2^{-\hat{a}_{i}}<\Phi(x_{i})<\Phi(x_{i+1})< \cdots < \Phi(x_{\infty}),
\end{equation*} so we cannot have $|B_{i}-\Phi(x_{\infty})| \leq 2^{-\hat{a}_{i}}$, contradicting the assumption that $M_{l_{i}}^{\phi_{\infty}}$ computes $\Phi(x_{\infty})$.
\item $j \neq l_{i}$ for all $i$. Choose the smallest $i>2$ for which $j<l_{i}$. Then $M_{j}^{\phi_{i-1}}(a_{j})$ outputs $A_{j}$ for which $|A_{j}-\Phi(x_{i-1})|>2^{-a_{j}}+\varepsilon_{j}$, where $a_{j}$ and $\varepsilon_{j}$ are from the $i$-th step of the induction. This same number $A_{j}$ is output when running $M_{j}^{\phi_{\infty}}(a_{j})$. By assumption, $|A_{j}-\Phi(x_{\infty})|<2^{-a_{j}}$, hence $|\Phi(x_{i-1})-\Phi(x_{\infty})|>\varepsilon_{j}$. But \begin{equation*}
    |\Phi(x_{k})-\Phi(x_{k+1})|<2^{-\hat{a}_{k}+2} \leq 2^{-\hat{n}_{k}+2}<\frac{\varepsilon_{j}}{2^{k}}
\end{equation*} for all $k \geq i-1$, thus \begin{equation*}
    |\Phi(x_{i-1})-\Phi(x_{\infty})| \leq \sum_{k=i-1}^{\infty}|\Phi(x_{k})-\Phi(x_{k+1})| < \sum_{k=i-1}^{\infty}\frac{\varepsilon_{j}}{2^{k}} \leq \varepsilon_{j},
\end{equation*} a contradiction.
\end{enumerate}

We have shown that none of the TMs $M_{j}^{\phi_{\infty}}$ compute $\Phi(x_{\infty})$, where $\phi_{\infty}$ is an oracle for $x_{\infty}$. This completes the proof of Theorem~\ref{thm:non_comp_exists}. \qed


\appendix

\section{Proofs of the main technical statements}
\label{section:Appendix}
\subsection{Proof of Theorem~\ref{thm:included}}
\label{sec-proof-included}
The most involved part of the proof will be showing that conditions (i)-(vii) on $G$ are satisfied by the $\alpha$-continued fraction expansion maps $A_{\alpha}$ for $\alpha \in [1/2, 1]$.

\begin{lem}  \label{lem:1a}       Conditions (i)-(vii) are satisfied by both the Gauss map and by $A_{\alpha}$ for $\alpha \in [1/2, 1)$.\end{lem}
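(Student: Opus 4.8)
The plan is to verify the seven conditions for the two maps separately, exploiting the fact that in both cases every branch of $G$ is a Möbius map of the form $x \mapsto \varepsilon(1/x - n)$ with $\varepsilon \in \{\pm 1\}$ and $n \in \NN$, so that $|G'(x)| = 1/x^2$ on every branch and the branch inverses are the explicit maps $y \mapsto 1/(\varepsilon y + n)$. Consequently all seven conditions reduce to elementary asymptotics for sequences of the shape $1/(n+c)$, with $c$ ranging over a fixed finite set, and the two cases differ only in the amount of bookkeeping.

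\textbf{The Gauss map.} Here $J_i = (1/(i+1), 1/i)$, $G_i(x) = 1/x - i$, $G_i^{-1}(y) = 1/(y+i)$, $s_0 = 0$, $s_1 = 1$. Conditions \ref{G:I_i surjects} and \ref{G:decreasing} (with $\varphi = (\sqrt 5 - 1)/2$) are immediate, \ref{G:computability} is standard, and for \ref{G:l_i and r_i condition squared}, \ref{G:function g} one computes $(r_i - \ell_i)/\ell_i^2 = (i+1)/i \le 2$ and $g(i) = (i+2)/i - 1 = 2/i \to 0$. For \ref{G:expansivity} note $\tau_{i,1} = \inf_{J_i} 1/x^2 = i^2$ and, taking $\kappa = 2$, the identity $|(G^2)'(x)| = 1/(1 - a_1 x)^2$ on $J_{a_1}$ gives $|(G^2)'| > (a_1+1)^2 \ge 4$ and $\tau_{i,2} = (i+1)^2$; then $\tau_{i,1}^{-1} = 1/i^2 < \ell_i\sigma$ with $\sigma = 3$ and $\tau_{i,2}^{-1} = \ell_i^2 < \ell_i\tau^{-1}$ with $\tau = 3/2$. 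For \ref{G:l_i and r_i condition}, $\delta_G(N) = \frac{1}{\varphi+N} - \frac{1}{\varphi+N+1} = \frac{1}{(\varphi+N)(\varphi+N+1)}$, $r_N - \ell_{N+1} = \frac{1}{N} - \frac{1}{N+2} = \frac{2}{N(N+2)}$, and $r_{N+1}/\ell_{N+1} = (N+2)/(N+1) \le 3/2$, so the product in \ref{G:l_i and r_i condition} remains bounded.

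\textbf{The map $A_\alpha$, $\alpha \in [1/2,1)$.} First I would write down the branch partition of $A_\alpha$ on $(s_0,s_1) = (0,\alpha)$: the branches are indexed by a pair (digit $n$, sign $\varepsilon$), their endpoints are all of the form $1/(n+c)$ with $c \in \{0,\alpha-1,\alpha\}$, and listing them from right to left produces intervals $J_i$ with $\ell_i, r_i$ of order $1/i$ and successive endpoints $O(1/i^2)$-close. Given this, \ref{G:l_i and r_i condition squared} and \ref{G:function g} follow exactly as for the Gauss map, \ref{G:decreasing} identifies $\varphi$ as the fixed point of the rightmost branch, and \ref{G:computability} holds once one checks that $\lfloor 1/x - \alpha + 1 \rfloor$ is computable on the relevant (irrational) inputs, which is the case for rational $\alpha$ as in the intended applications. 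Surjectivity \ref{G:I_i surjects} is where the reflection (sign $-1$) branches require attention and must be read off the explicit endpoint formulas. For \ref{G:expansivity}, again $|A_\alpha'| = 1/x^2$ gives $\tau_{i,1} \asymp \ell_i^{-2}$, while uniform expansion of a bounded iterate $A_\alpha^\kappa$ is the classical ``eventually expanding'' property of $\alpha$-continued fraction maps; the two refined inequalities are then extracted, with suitable $\sigma,\tau,\kappa$, from the endpoint bounds. For \ref{G:l_i and r_i condition}, I would compute $\delta_{A_\alpha}(N) = G_N^{-1}(\varphi) - G_{N+1}^{-1}(\varphi)$ from the Möbius inverses $y \mapsto 1/(\varepsilon y + n)$; since consecutive branches may carry opposite signs, a short case split is needed, but in every case $\delta_{A_\alpha}(N) \asymp N^{-2} \asymp r_N - \ell_{N+1}$ with $r_{N+1}/\ell_{N+1} = O(1)$, giving \ref{G:l_i and r_i condition}.

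\textbf{Main obstacle.} For the Gauss map everything is a short explicit computation. For $A_\alpha$ the conditions \ref{G:I_i surjects}, \ref{G:decreasing}, \ref{G:l_i and r_i condition squared}, \ref{G:computability}, \ref{G:function g} are quick once the partition is in hand; the real work is (a) setting that partition up correctly in the presence of the reflection branches, (b) the two-sided refinement in \ref{G:expansivity}, where one must invoke eventual uniform expansion of $A_\alpha$ and actually track the constants $\tau,\sigma,\kappa$, and (c) condition \ref{G:l_i and r_i condition}, whose form is somewhat unusual. The guiding observation for (c) is that $\delta_G(N)$, $r_N - \ell_{N+1}$ and $r_{N+1}/\ell_{N+1}$ are all Möbius images of a fixed point or of branch endpoints, hence share the same $\asymp N^{-2}$ (respectively $O(1)$) asymptotics, so the ratios in \ref{G:l_i and r_i condition} telescope to a bounded quantity. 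I expect \ref{G:l_i and r_i condition}, together with the sign-branch bookkeeping, to be the crux.
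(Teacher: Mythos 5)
Your verification for the Gauss map is correct and follows essentially the same explicit computations as the paper (with the same choice $\kappa=2$, $\tau_{i,1}=i^2$, $\tau_{i,2}=(i+1)^2$ and the same formula for $\delta_G(N)$). The gap is in the treatment of $A_\alpha$, and it is not just a matter of bookkeeping: the proposal contains a substantive error in the setup that would cause condition \ref{G:I_i surjects} to fail.

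Specifically, you take the domain to be $(s_0,s_1)=(0,\alpha)$. For $\alpha<1$ the branches of $A_\alpha$ on $(0,\alpha)$ are \emph{not} all surjective onto $(0,\alpha)$, so \ref{G:I_i surjects} is false in that setting. The paper's crucial first move is to set $n_1=\lceil 1/(1-\alpha)\rceil$ (the smallest integer with $A_\alpha(1/n_1)=0$) and restrict to the maximal invariant set $\Lambda\subseteq (s_0,s_1):=(0,1/n_1)\subseteq(0,1-\alpha)$. On this smaller interval surjectivity of the branches follows from maximality and $|A_\alpha'|>1$. This restriction also makes \ref{G:expansivity} much simpler than your plan suggests: the paper takes $\kappa=1$ and verifies the two refined inequalities directly from the explicit endpoint formulas, using that $r_i^2/\ell_i<1$ on the restricted domain; there is no need to invoke a general ``eventually expanding'' result for some unspecified $\kappa$, and doing so would leave the explicit $\tau,\sigma,\kappa$ required by the hypothesis untracked. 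Finally, for \ref{G:l_i and r_i condition} you correctly flag that the sign-branch bookkeeping is the crux, but you never carry it out; the paper resolves it by computing $\delta_{A_{1/2}}(N)$ separately for odd and even $N$ (using $\psi=[1,1,\ldots]_{1/2}=\sqrt 2-1$), and then observes the key simplification that $A_\alpha=A_{1/2}$ on the invariant set $\Lambda$ for all $\alpha\in(1/2,1)$ — precisely because $\Lambda\subseteq(0,1-\alpha)$ forces $\{1/x\}\notin[1-\alpha,\alpha]$ — so the $\alpha\in(1/2,1)$ case reduces entirely to $\alpha=1/2$. Without the domain restriction to $(0,1/n_1)$ that observation, and with it the whole reduction, is unavailable.
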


\begin{proof}Properties (i), (iii), (v), (vi), (vii) are easy enough that we group them together and prove them at once, and then (ii) and (iv) are verified separately.

\textbf{(i), (iii), (v), (vi), (vii)}. For the Gauss map $G$, $G(J_{i}) = G( (\frac{1}{i+1}, \frac{1}{i}) ) = (0, 1) = (s_{0}, s_{1})$.  It is clear that $G$ is decreasing on $J_{1} = (1/2, 1)$. Since $J_{i} = (\ell_{i}, r_{i}) = (\frac{1}{i+1}, \frac{1}{i})$, we have
\begin{equation*}
    \dfrac{r_{i}-\ell_{i}}{\ell_{i}^{2}} < 2 =: D \text{\hspace{2px} and \hspace{2px}} g(i) = \frac{r_{i}}{\ell_{i+1}} \xrightarrow{i \to \infty} 0.
\end{equation*}
Finally, since $G$ is a composition of computable functions, it is computable.

Now let $\alpha \in [1/2, 1)$, and consider the map $A_{\alpha}$ corresponding to $\alpha$-continued fraction expansions. Set $n_{1} := \lceil 1/(1-\alpha) \rceil$; this is the smallest integer for which $1/n_{1} \leq 1-\alpha$ and $A_{\alpha}(1/n_{1}) = 0$. We restrict the domain and codomain of $A_{\alpha}(x)$ to the maximal invariant set $\Lambda \subseteq (s_{0}, s_{1}) := (0, 1/n_{1}) \subseteq (0, 1-\alpha)$, and henceforth consider $A_{\alpha}: \Lambda \to \Lambda$. It is clear that $\Lambda$ is a countable disjoint union of connected intervals, which we label $J_{1} = (\ell_{1}, r_{1}), J_{2} = (\ell_{2}, r_{2}), \ldots$ with $r_{1} > \ell_{1} \geq r_{2} > \ell_{2} \geq \cdots$. From maximality of the invariant set and the fact that $|A_{\alpha}'(x)|>1$ on $\Lambda$, (i) holds. For (vi), computability of $A_{\alpha}$ again follows since it a composition of computable functions. We will make use of the following facts about $A_{\alpha}$ and the intervals $J_{j}$:
\begin{itemize}
    \item $A_{\alpha}$ is decreasing on $J_{j}$ for odd $j$ and increasing on $J_{j}$ for even $j$. In particular, since $A_{\alpha}(r_{1}) = A_{\alpha}(1/n_{1}) = 0$, $|A_{\alpha}'(x)|>1$, and $A_{\alpha}(x) > 0$, $A_{\alpha}$ is decreasing on $J_{1}$ and so (iii) is satisfied.
    \item For $\alpha = 1/2$, it can be readily computed by solving the equations $A_{\alpha}(x) = 0$ and $A_{\alpha}(x) = 1$ that $n_{1} = 2$ and $J_{j} = (\ell_{j}, r_{j}) = (\frac{2}{j+4}, \frac{2}{j+3})$.
    \item For $\alpha \in (1/2, 1)$, we have
    \begin{align*}
        &\ell_{j} > \frac{2}{2(n_{1}-2+k)+3} = \frac{2}{j+2n_{1}}, \text{\hspace{10px}} r_{j} = \frac{1}{n_{1}-1+k} = \frac{2}{j+2n_{1}-1} \text{\hspace{5px} for $j = 2k-1$ odd, and} \\
        &\ell_{j} = \frac{1}{n_{1}+k} = \frac{2}{2n_{1}+j}, \text{\hspace{65px}} r_{j} < \frac{2}{2(n_{1}-2+k)+3} = \frac{2}{j+2n_{1}-1} \text{\hspace{5px} for $j = 2k$ even.}
    \end{align*}
\end{itemize}
It is easy to verify (v) and (vii) by showing that $\dfrac{r_{i}-\ell_{i}}{\ell_{i}^{2}}$ is bounded from above independently of $i$ and that $g(i) \xrightarrow{i \to \infty} 0$.

\textbf{(ii)}. For $G$, we note that $G'(x) = 1/x^{2}, G''(x) = 2/x^{3}$ where it is defined. Since $G$ is decreasing on all $J_{j}$, we have $\tau_{i,1} = |G'(r_{i})| = i^{2}$. Taking any $\sigma>2$, $\tau_{i,1}^{-1} < \ell_{i} \sigma$ as needed. Now set $\kappa = 2$. Then where it is defined, it is easy to check that $(G^{2})'' > 0$ and thus $\tau_{i,2} = |(G^{2})'(\ell_{i})| = (i+1)^{2}$. Hence for any $1 < \tau < 2$, we have $\tau_{i,2}^{-1} < \ell_{i} \cdot \tau^{-1}$ as needed.

Now again consider $A_{\alpha}$, for $\alpha \in [1/2, 1)$. As for the Gauss map, $A_{\alpha}'(x) = 1/x^{2}, A_{\alpha}''(x) = 2/x^{3} > 0$ where it is defined. Set $\kappa = 1$. Then $\tau_{i,\kappa} = \tau_{i, 1} = 1/r_{i}^{2}$.
\begin{itemize}
    \item Let $\sigma>1$ be large enough that for all $i$, $\sigma > \dfrac{2(2n_{1}+i)}{(i+2n_{1}-1)^{2}}$. It is easily verified that $\tau_{i,1}^{-1} < \ell_{i} \sigma$.
    \item Now, note that
    \begin{equation*}
        \tau_{i,1}^{-1} = r_{i}^{2} < \ell_{i} \tau^{-1} \iff \dfrac{2(2n_{1}+i)}{(i+2n_{1}-1)^{2}} < \tau^{-1}.
    \end{equation*}
    Since the left-hand side tends to zero as $n_{1} \to \infty$ for all $i \geq 1$, restricting $\Lambda$ to a small enough invariant set under $A_{\alpha}$, that is, making $r_{1} = \frac{1}{n_{1}}$ small enough, gives existence of some $\tau>1$ for which the above holds.
\end{itemize}

\textbf{(iv)}. For the Gauss map, recalling that $\varphi = \frac{\sqrt{5}-1}{2} = [1, 1, \ldots]$, we have
\begin{equation*}
    \delta_{G}(N) = \frac{1}{(N + \varphi)(N + 1 + \varphi)} \text{\hspace{5px} and so \hspace{5px}} \dfrac{r_{N+1}}{\ell_{N+1}} \cdot \dfrac{r_{n} - \ell_{N+1}}{\delta_{G}(N)} < D
\end{equation*}
for some constant $D$. We first show (iv) holds for $A_{1/2}$, then for $A_{\alpha}$, $\alpha \in (1/2, 1)$. We denote the symbolic expansion of a point in $\Lambda$ generated by $A_{1/2}$ as $[a_{1}, a_{2}, a_{3}, \ldots]_{1/2}$, and that of a point in the invariant set generated by $G$ as $[a_{1}, a_{2}, a_{3}, \ldots]_{1}$. Since the interval $J_{1}$ corresponding to $A_{1/2}$ is contained in the interval $J_{2}$ corresponding to $G$, we have $\psi := [1, 1, 1, \ldots]_{1/2} = [2, 2, 2, \ldots]_{1} = \sqrt{2}-1$. For $N = 2k-1$ odd, the interval $J_{N}$ corresponding to $A_{1/2}$ is contained in the interval $J_{(N+3)/2}$ corresponding to $G$, so
\begin{equation*}
    [N, 1, 1, \ldots]_{1/2} = [k+1, 2, 2, \ldots]_{1} = [(N+3)/2, 2, 2, \ldots]_{1} = \dfrac{1}{\frac{N+3}{2}+\psi} = \dfrac{2}{N + 3 + 2\psi}.
\end{equation*}
Now consider $N = 2k$ even. Observe that on $J_{N}, A_{1/2}(x) = 1 - G(x)$. We have
\begin{equation*}
    [N, 1, 1, \ldots]_{1/2} = (A_{1/2}|_{J_{N}})^{-1}([1, 1, \ldots]_{1/2}) = G_{N/2 + 1}^{-1}(1 - [2, 2, \ldots]_{1}) = \dfrac{2}{N+4-2\psi}.
\end{equation*}
Thus
\begin{align*}
    \delta_{A_{1/2}}(N) &= [N, 1, 1, \ldots]_{1/2} - [N+1, 1, 1, \ldots]_{1/2} = \dfrac{4 - 8\psi}{(N+3+2\psi)(N+5-2\psi)} \text{ if $N$ is odd, and} \\
    \delta_{A_{1/2}}(N) &= \dfrac{8\psi}{(N+4-2\psi)(N+4+2\psi)} \text{ if $N$ is even.}
\end{align*}
Noting that $r_{N} - \ell_{N+1} = \frac{4}{(N+3)(N+5)}$ and $\frac{r_{N+1}}{\ell_{N+1}} = \frac{N+5}{N+4}$, it is clear that some constant $D$ upper-bounds $\frac{r_{N+1}}{\ell_{N+1}} \cdot \frac{r_{N}-\ell_{N+1}}{\delta_{A_{1/2}}(N)}$.
Finally, for $\alpha \in (1/2, 1)$, we have $A_{1/2} = A_{\alpha}$ on the invariant set $\Lambda$ corresponding to $A_{\alpha}$. Thus the asymptotic behaviour of $\ell_{N}$, $r_{N}$, and $\delta_{A_{\alpha}}(N)$ is identical, so (iv) holds in this case as well.
\end{proof}

We can now prove Theorem~\ref{thm:included} without too much difficulty.

\textit{Proof of Theorem~\ref{thm:included}.} By Lemma~\ref{lem:1a} and since the function $u$ in the definition of $\cB_{\alpha,u,\nu}$ satisfies $\lim_{x \to 0^{+}}u(x) = \infty$ by assumption, conditions (i)-(vii) on $G$ and (i)-(v) on $u$ hold for $\cB_{\alpha, u, v}$. To prove this Theorem is remains to show that for $G = A_{\alpha}$, $\alpha \in [1/2,1]$, conditions (i)-(v) hold for both $u(x) = \log^{n}(1/x), n \in \mathbb{Z}^{+}$ and $u(x) = x^{-1}$. It is clear that both these functions are computable on $\mathbb{R}^{+}$ and tend to infinity near zero. If $\alpha \in [1/2, 1)$ then $s_{1}<1$, so \ref{u:2} in this case is satisfied. Thus we only need to show \ref{u:2} in the case that $G$ is the ordinary Gauss map, and \ref{u:3}.

For $u(x) = x^{-1}$, \ref{u:3} is obvious. If $u(x) = \log^{n}(1/x)$, then $|u'(x)| = \frac{n}{x} \cdot \log^{n-1}(1/x)$. Since there is some $C_{n}>0$ for which $\log^{n-1}(y) < C_{n}y$ for all large enough $y$, we have $\log^{n-1}(1/x) < \frac{C_{n}}{x}$ for all small enough $x$. Thus $|u'(x)| < \frac{nC_{n}}{x^{2}}$ for all small enough $x$ and since $|u'(x)|$ is bounded for $x$ bounded away from zero, there is some $C>0$ for which $|u'(x)| < \dfrac{C}{(x-s_{0})^{2}}$ for all $x \in (s_{0}, s_{1})$ as needed.

Now suppose $G$ is the Gauss map. Then $u \circ G_{1}^{-1} \circ G_{N}^{-1}(z)$ is decreasing with respect to $N$ since $u(\cdot)$ is decreasing, so
\begin{equation*}
    \frac{u \circ G_{1}^{-1} \circ G_{N}^{-1}(z)}{u \circ G_{1}^{-1} \circ G_{N}^{-1}(w)} \geq \frac{u \circ G_{1}^{-1} \circ G_{N}^{-1}(1)}{u \circ G_{1}^{-1} \circ G_{N}^{-1}(0)} =: v(N).
\end{equation*}
A tedious computation shows that $v'(N)>0$ and $v(N)>0$ for all $N$ and both $u(x) = \log^{n}(1/x)$ and $u(x) = x^{-1}$, therefore \ref{u:2} holds. \qed

\subsection{Proofs of the main lemmas}
\label{sec-proof-main-lemmas}
Here we will prove the three main Lemmas used in the above work. Lemma~\ref{lem:5.18} will be proven under the weaker assumptions on $G$ and $u$ along with the $s(i)$ term in section \ref{section:Generalized non-computability result}, and Lemmas \ref{lem:5.19} and \ref{lem:5.20} will be proven under the full assumptions in section \ref{section:Statements}. We recall these Lemmas below.

\textbf{Lemma~\ref{lem:5.18}.} \textit{For any initial segment $I = [a_{1}, a_{2}, \ldots, a_{n}]$, write $\omega = [a_{1}, a_{2}, \ldots, a_{n}, 1, 1, 1, \ldots]$. Then for any $\varepsilon>0$, there is an $m>0$ and an integer $N$ such that if we write $\beta = [a_{1}, a_{2}, \ldots, a_{n}, 1, 1, \ldots, 1, N, 1, 1, \ldots]$, where the $N$ is located in the $(n+m)$-th position, then}
\begin{equation*}
    \Phi(\omega) + \varepsilon < \Phi(\beta) < \Phi(\omega) + 2\varepsilon.
\end{equation*}

\textbf{Lemma~\ref{lem:5.19}.} \textit{Write $\omega = [a_{1}, a_{2}, \ldots, a_{n}, 1, 1, 1, \ldots]$. Then for any $\varepsilon>0$ there is an $m_{0}>0$, which can be computed from $(a_{1}, a_{2}, \ldots, a_{n})$ and $\varepsilon$, such that for any $m \geq m_{0}$ and for any tail $I = [a_{n+m}, a_{n+m+1}, \ldots]$,
\begin{equation*}
    \Phi(\beta^{I}) > \Phi(\omega) - \varepsilon
\end{equation*}
where}
\begin{equation*}
    \beta^{I} =  [a_{1}, a_{2}, \ldots, a_{n}, 1, 1, \ldots, 1, a_{n+m}, a_{n+m+1}, \ldots].
\end{equation*}

\textbf{Lemma~\ref{lem:5.20}.} \textit{Let $\omega = [a_{1}, a_{2}, \ldots]$ be such that $\Phi(\omega) < \infty$. Write $\omega_{k} = [a_{1}, a_{2}, \ldots, a_{k}, 1, 1, \ldots]$. Then for every $\varepsilon>0$ there is an $m$ such that, for all $k \geq m$,}
\begin{equation*}
    \Phi(\omega_{k}) < \Phi(\omega) + \varepsilon.
\end{equation*}

Write
\begin{equation*}
    \beta^{N} = [a_{1}, a_{2}, \ldots, a_{n}, 1, 1, \ldots, 1, N, 1, 1, \ldots],
\end{equation*}
where $N$ is in the $(m+n)$-th position. The following preliminary Lemmas are required for all of the main proofs below.

\begin{lem}\label{lem:u_bounded}
For any $s_{0}<a<s_{1}$, the function $u$ is bounded on the interval $[a, s_{1})$.
\end{lem}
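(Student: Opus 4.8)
# Proof Proposal for Lemma \ref{lem:u_bounded}

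The plan is to prove that $u$ is bounded on any interval $[a, s_1)$ with $s_0 < a < s_1$, using the two facts available about $u$ from the definitions: it is $\mathcal{C}^1$ on $(s_0, s_1)$, and it satisfies the derivative bound $|u'(x)| < C/(x-s_0)^2$ (assumption \ref{u:3}, or the one-sided version \ref{u:3'} which still gives the needed control in the direction that matters). Note that the \emph{only} potential blow-up of $u$ on $(s_0,s_1)$ is at the left endpoint $s_0$, by assumption \ref{u:1}; the content of the lemma is that moving away from $s_0$ to $x = a$ tames it, and in particular that $u$ does not blow up as $x \to s_1^-$.

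First I would fix $a$ with $s_0 < a < s_1$ and consider any $x \in [a, s_1)$. By the fundamental theorem of calculus applied on $[a,x]$ (valid since $u \in \mathcal{C}^1$ on the open interval containing this compact subinterval), we have
\begin{equation*}
    u(x) = u(a) + \int_a^x u'(t)\, dt,
\end{equation*}
hence
\begin{equation*}
    |u(x)| \leq |u(a)| + \int_a^x |u'(t)|\, dt \leq |u(a)| + \int_a^{s_1} \frac{C}{(t-s_0)^2}\, dt = |u(a)| + C\left( \frac{1}{a-s_0} - \frac{1}{s_1-s_0} \right).
\end{equation*}
The right-hand side is a finite constant depending only on $a$, $C$, $s_0$, $s_1$, and the fixed value $u(a)$; it does not depend on $x$. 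Therefore $u$ is bounded on $[a, s_1)$. (If one works with the weaker one-sided bound \ref{u:3'}, the same estimate goes through: $u'(t) < C/(t-s_0)^2$ gives an upper bound on $u(x)$, while $u > 0$ gives the lower bound $0$, so $u$ is still bounded on $[a,s_1)$.)

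There is essentially no obstacle here — the lemma is genuinely elementary, which is why the paper states it as a preliminary fact. The only thing to be careful about is that the integral $\int_a^{s_1} (t-s_0)^{-2}\,dt$ converges, which holds precisely because $a > s_0$; this is exactly where the hypothesis $s_0 < a$ is used, and it illustrates that the singularity of $u$ is integrable-in-derivative away from $s_0$ but the bound degenerates as $a \to s_0^+$. One could alternatively phrase the argument purely with the supremum: $\sup_{[a,s_1)} |u| \leq |u(a)| + C/(a-s_0)$, which suffices for all later applications where the interval of interest is $[\eta_i(x), s_1)$ for points bounded away from $s_0$.
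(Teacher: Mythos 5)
Your proof is correct and takes essentially the same route as the paper: both integrate the derivative bound over $[a,s_1)$ and use that $\int_a^{s_1} C/(t-s_0)^{2}\,dt$ is finite because $a>s_0$. The only cosmetic difference is that the paper integrates $-u'(t) < C/(t-s_0)^{2}$ outward from $x$ to obtain a pointwise bound of the form $u(x) < \tilde C/(x-s_0)$, whereas you integrate from the fixed left endpoint $a$ and get a uniform constant; both establish boundedness. Your parenthetical remark handling the one-sided hypothesis \ref{u:3'} via positivity of $u$ is a genuine improvement rather than padding: the paper's stated proof invokes the bound $-u'(x) < C/(x-s_0)^{2}$, which follows from the two-sided assumption \ref{u:3} but not from \ref{u:3'} alone, and yet Lemma~\ref{lem:u_bounded} is also used inside the proof of Lemma~\ref{lem:5.18}, which the paper proves under the weaker hypotheses of Section~\ref{section:Generalized non-computability result}. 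Your observation that under \ref{u:3'} one should integrate $u'$ forward (giving an upper bound) and use $u>0$ for the lower bound is exactly what is needed to make the lemma valid in that setting as well.
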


\begin{proof}
By assumption on $u$, there is some $C>0$ for which $-u'(x) < \frac{C}{(x-s_{0})^{2}}$ for all $x \in [a, s_{1})$. Integrating both sides gives $u(x) < \frac{\tilde{C}}{x-s_{0}} \leq \frac{\tilde{C}}{a-s_{0}}$ for another constant $\tilde{C}>0$ and all $x \in [a, s_{1})$.
\end{proof}

\begin{lem}\label{lem:log_ratio_estimates}
Let $\beta^{N}, \beta^{1}$ be as before, let $\beta^{I}, \omega$ be as in Lemma~\ref{lem:5.19}, and let $\gamma_{1}, \gamma_{2}$ be two numbers whose symbolic representations coincide in the first $n+m-1$ terms $[a_{1}, a_{2}, \ldots, a_{n+m-1}]$ (in particular, we could have $\gamma_{1} = \beta^{N}$ and $\gamma_{2} = \beta^{1}$). Then for $g$ and $m_{g}$ as in \ref{G:function g}, for $\kappa$ as in \ref{G:expansivity}, and for any N, the following holds:

\begin{enumerate}[label={(\arabic*)}]
    \item \label{lem:u_diff_bounded:1}  For $i \leq n+m$, \begin{equation*}
        \left |  \log \frac{\eta_{i}(\beta^{N})}{\eta_{i}(\beta^{N+1})} \right | < \dfrac{g(N)}{\ell_{1}} \cdot \tau^{1+\frac{i-(n+m)}{\kappa}}.
    \end{equation*}
    \item \label{lem:u_diff_bounded:2} For $i < n+m$, \begin{equation*}
        \left |  \log \frac{\eta_{i}(\gamma_{1})}{\eta_{i}(\gamma_{2})} \right | < m_{g} \cdot \sigma \cdot \tau^{1 + \frac{i+1-(n+m)}{\kappa}}.
    \end{equation*}
    \item \label{lem:u_diff_bounded:3} For all large enough $m$, there is some positive function $f$ with $\lim_{m \to \infty}f(m) = 0$ such that for $i<n+m-1$, \begin{equation*}
        \left |  \log \frac{u(\eta_{i}(\beta^{I}))}{u(\eta_{i}(\omega))} \right | < f(m).
    \end{equation*}
    In particular, we could take $\beta^{I} = \beta^{1}$ and $\omega = \beta^{N}$. It clearly follows that $0 < \sup_{m \in \mathbb{N}}f(m) =: M < \infty$.
\end{enumerate}
\end{lem}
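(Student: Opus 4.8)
The plan is to derive all three estimates from a single \emph{logarithmic bounded--distortion} principle for the inverse branches of $G$, read off directly from assumption \ref{G:expansivity}. For any branch $a$ and any $t\in(s_{0},s_{1})$, set $y:=G_{a}^{-1}(t)\in J_{a}$, so that $G(y)=t$ and
\[
\left|\frac{d}{d(\log t)}\log G_{a}^{-1}(t)\right|=\frac{G(y)}{y\,|G'(y)|}<\frac{1}{\ell_{a}\,\tau_{a,1}}<\sigma,
\]
using $G(y)<s_{1}<1$, $y>\ell_{a}$, $|G'(y)|\geq\tau_{a,1}$ and $\tau_{a,1}^{-1}<\ell_{a}\sigma$; for the branch $a=1$ this sharpens to $<1/\ell_{1}$ since $\tau_{1,1}>1$. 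The same computation applied to a $\kappa$-fold inverse branch $G_{b_{1}}^{-1}\circ\cdots\circ G_{b_{\kappa}}^{-1}$ gives a log-derivative $<\dfrac{1}{\ell_{b_{1}}\tau_{b_{1},\kappa}}<\tau^{-1}$ by the second inequality in \ref{G:expansivity}. Thus, in the coordinate $\log(\cdot)$, each inverse branch expands distances by a factor $<\sigma$ (and $<1/\ell_{1}$ for the branch $1$), while every block of $\kappa$ consecutive inverse branches contracts by a factor $<\tau^{-1}$; equivalently, at the Euclidean level $|(G^{j})'|>\tau^{\,j/\kappa-1}$.

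For \ref{lem:u_diff_bounded:1}: $\beta^{N}$ and $\beta^{N+1}$ share their first $n+m-1$ symbols (namely $a_{1},\dots,a_{n},1,\dots,1$), while $\eta_{n+m}(\beta^{N})\in J_{N}$ and $\eta_{n+m}(\beta^{N+1})\in J_{N+1}$ with $r_{N}/\ell_{N+1}=1+g(N)$, so $\left|\log\frac{\eta_{n+m}(\beta^{N})}{\eta_{n+m}(\beta^{N+1})}\right|\leq\log(1+g(N))<g(N)$. For $i\leq n+m$ one recovers $\eta_{i}(\beta^{N}),\eta_{i}(\beta^{N+1})$ by applying the $n+m-i$ inverse branches $G_{a_{n+m-1}}^{-1},\dots,G_{a_{i}}^{-1}$; grouping them into $\lfloor(n+m-i)/\kappa\rfloor$ blocks of $\kappa$ (each log-contracting by $<\tau^{-1}$) together with fewer than $\kappa$ leftover single steps, and using that the branches at positions strictly between $n$ and $n+m$ are all $1$ (log-factor $<1/\ell_{1}$), gives
\[
\left|\log\frac{\eta_{i}(\beta^{N})}{\eta_{i}(\beta^{N+1})}\right|<\frac{g(N)}{\ell_{1}}\,\tau^{\,1+\frac{i-(n+m)}{\kappa}},
\]
after absorbing the leftover factors with $\ell_{1}<s_{1}<\tau$ and $\lfloor x\rfloor>x-1$.

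Part \ref{lem:u_diff_bounded:2} is identical in spirit: $\gamma_{1},\gamma_{2}$ share their first $n+m-1$ symbols, the last of which is $1$ when $m\geq 2$ (the case $m=1$ is immediate), so $\eta_{n+m-1}(\gamma_{1}),\eta_{n+m-1}(\gamma_{2})\in J_{1}$ and their log-ratio is at most $\log(r_{1}/\ell_{1})$, bounded in terms of $m_{g}$ and $\sigma$ via \ref{G:l_i and r_i condition squared} and \ref{G:function g}; pulling back through the remaining $n+m-1-i$ inverse branches by the same block argument produces the bound $m_{g}\sigma\,\tau^{\,1+\frac{i+1-(n+m)}{\kappa}}$. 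For part \ref{lem:u_diff_bounded:3} I would first turn the estimate of \ref{lem:u_diff_bounded:2} (or rerun the block argument with $|(G^{j})'|>\tau^{\,j/\kappa-1}$ directly) into a Euclidean bound $|\eta_{i}(\beta^{I})-\eta_{i}(\omega)|<(\text{const})\,\tau^{\,1+\frac{i+1-(n+m)}{\kappa}}$, and then write $\log\frac{u(\eta_{i}(\beta^{I}))}{u(\eta_{i}(\omega))}=\int_{\eta_{i}(\omega)}^{\eta_{i}(\beta^{I})}\frac{u'(t)}{u(t)}\,dt$. For $i<n+m-1$ with $i>n$ the two endpoints lie in a common cylinder contained in $G_{1}^{-1}(J_{1})$, whose closure is a compact subset of $(s_{0},s_{1})$ bounded away from $s_{0}$; there $|u'|$ is bounded by \ref{u:3} (resp.\ \ref{u:3'}) and $u$ is bounded above by Lemma~\ref{lem:u_bounded} and bounded below by continuity and positivity, so $|u'/u|$ is bounded by a constant independent of $i$ and $m$; the finitely many indices $i\leq n$ are treated separately (there $\eta_{i}(\beta^{I})\to\eta_{i}(\omega)$ as $m\to\infty$, an interior point of $(s_{0},s_{1})$). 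Taking $f(m)$ to be the supremum of the resulting bound over the admissible $i$ gives a positive, bounded function, which tends to $0$ on the indices whose depth $n+m-i$ grows with $m$.

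The main obstacle is the constant bookkeeping in \ref{lem:u_diff_bounded:1}--\ref{lem:u_diff_bounded:2}: one must organize the leftover (non-$\kappa$-block) single inverse branches so that their cumulative log-expansion is swallowed by the advertised factors $g(N)/\ell_{1}$ and $m_{g}\sigma$, which is exactly what the calibration of \ref{G:expansivity} (single steps contract relative to the left endpoint $\ell_{a}$, and $\kappa$-blocks contract outright) is designed to make possible. A secondary point is that part \ref{lem:u_diff_bounded:3} requires the cylinders carrying the relevant $\eta_{i}$ to stay a definite distance from $s_{0}$, so that $u'/u$ is uniformly controlled; this is precisely where the structure $a_{n+1}=\cdots=a_{n+m-1}=1$ is used.
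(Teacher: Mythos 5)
Your ``logarithmic bounded--distortion'' framework is a natural unifying idea, and the block decomposition (blocks of $\kappa$ contracting by $<\tau^{-1}$, plus leftovers) is indeed the right skeleton; but the bookkeeping does not close for parts \ref{lem:u_diff_bounded:1}--\ref{lem:u_diff_bounded:2}, because single inverse branches are \emph{not} contractions in logarithmic coordinates. Your own log-derivative computation gives a per-step bound of $\frac{1}{\ell_a\tau_{a,1}}<\sigma$, and for the branch $a=1$ only $\frac{1}{\ell_1\tau_{1,1}}<\frac{1}{\ell_1}$; neither is $\le 1$ in general (e.g.\ for the Gauss map $\frac{1}{\ell_1\tau_{1,1}}=2$). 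Thus each of the up to $\kappa-1$ leftover single steps can \emph{expand} the log-distance, and their cumulative factor can be as large as $\bigl(\max(\sigma,1/\ell_1)\bigr)^{\kappa-1}$. But the target bound in \ref{lem:u_diff_bounded:1} leaves room for only a single factor of $1/\ell_1$ and no $\sigma$ at all, and the target in \ref{lem:u_diff_bounded:2} leaves room for only a single $\sigma$; the slack factor $\tau$ coming from $\lfloor x\rfloor>x-1$ cannot absorb several powers of $\sigma$ or $1/\ell_1$. The secondary assertion that the branches along the pull-back are all $1$ also fails for $i\le n$, where the branches $a_i,\dots,a_n$ are arbitrary, and those leftovers would contribute factors bounded only by $\sigma$.

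The paper sidesteps this entirely by working at the \emph{Euclidean} level first: every single inverse branch contracts Euclidean distance by a factor $<1$ (since $|G'|>1$), every $\kappa$-block contracts by $<\tau^{-1}$, so one obtains, say for \ref{lem:u_diff_bounded:1}, $|\eta_i(\beta^N)-\eta_i(\beta^{N+1})|<g(N)\,\tau^{1+\frac{i-(n+m)}{\kappa}}$ with no accumulation of $\sigma$ or $1/\ell_1$. The conversion to a ratio/log bound is then performed \emph{once}, at the very end: for \ref{lem:u_diff_bounded:1} by dividing by $\eta_i(\beta^{N+1})$, and for \ref{lem:u_diff_bounded:2} by spending the single $\sigma$ at the last applied inverse branch via $\tau_{N_i,1}^{-1}<\ell_{N_i}\sigma\le\eta_i(\gamma_2)\sigma$ (with the intermediate branches contributing only the trivial Euclidean contraction $<1$). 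That is the real use of the calibration $\tau_{a,1}^{-1}<\ell_a\sigma$ in \ref{G:expansivity}: not a per-step log-distortion constant, but a one-shot conversion from absolute difference at depth $i+1$ to relative difference at depth $i$. Your proposal as written would need to be restructured along these lines to deliver the stated constants.

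Your plan for part \ref{lem:u_diff_bounded:3} is essentially the paper's: the relevant $\eta_i$ live in a compact union of two-digit cylinders bounded away from $s_0$ and $s_1$, on which $u$ is bounded below by a positive constant, so the log-ratio is controlled by the Euclidean distance; the paper phrases this via a uniform modulus of continuity of $u$ while you integrate $u'/u$, and these are equivalent. One small correction: boundedness of $|u'|$ on that compact set comes simply from $u\in\mathcal{C}^1$, not from \ref{u:3} or \ref{u:3'} (which control $u'$ only near $s_0$).
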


\begin{proof}[Proof of \ref{lem:u_diff_bounded:1}]
We will first show that for all $i$ we have
\begin{equation} \label{eq:1}
    |\eta_{i}(\beta^{N}) - \eta_{i}(\beta^{N+1})| < g(N) \cdot \tau^{1+\frac{i-(n+m)}{\kappa}}. \tag{$*_{1}$}
\end{equation}

If $i=n+m$, since $[N, 1, 1, \ldots] \in J_{N}$, $[N+1, 1, 1, \ldots] \in J_{N+1}$, for all $N \in \mathbb{N}$ we have
\begin{equation*}
    1 < \dfrac{\eta_{n+m}(\beta^{N})}{\eta_{n+m}(\beta^{N+1})} = \dfrac{[N, 1, 1, \ldots]}{[N+1, 1, 1, \ldots]} < \dfrac{r_{N}}{\ell_{N+1}} < 1 + g(N)
\end{equation*}
and so 
\begin{equation*}
    |\eta_{n+m}(\beta^{N}) - \eta_{n+m}(\beta^{N+1})| < g(N) < g(N) \cdot \tau^{1+\frac{i-(n+m)}{\kappa}}.
\end{equation*}

Now let $1 \leq j < \kappa$. We wish to show \ref{eq:1} for $i = n+m-j$. Note that since $|G_{1}'(x)| > 1$ for all $x \in S$ by \ref{G:expansivity} and $G_{1}$ is $\mathcal{C}^{1}$, the Inverse Function Theorem gives $|(G_{1}^{-1})'(x)| < 1$ for all $x \in J_{1}$. Repeatedly applying the Mean Value Theorem gives
\begin{equation*}
    |\eta_{n+m-j}(\beta^{N}) - \eta_{n+m-j}(\beta^{N+1})| < |\eta_{n+m}(\beta^{N}) - \eta_{n+m}(\beta^{N+1})| < g(N) < g(N) \cdot \tau^{1-j / \kappa}.
\end{equation*}
For $i = n+m - \kappa$, since $|(G_{1}^{\kappa})'|>\tau$ by assumption \ref{G:expansivity} on $G$, we similarly use Inverse Function Theorem and Mean Value Theorem (applied to the function $G_{1}^{-\kappa}$ this time) to get
\begin{equation*}
    |\eta_{n+m-\kappa}(\beta^{N}) - \eta_{n+m-\kappa}(\beta^{N+1})| < \tau^{-1} \cdot |\eta_{n+m}(\beta^{N}) - \eta_{n+m}(\beta^{N+1})| < g(N) \cdot \tau^{1+\frac{i-(n+m)}{\kappa}}
\end{equation*}
To show \ref{eq:1} for all other $i>n+m-j$ with $j>\kappa$, we apply the inequality
\begin{equation*}
    |\eta_{n+m-j}(\beta^{N}) - \eta_{n+m-j}(\beta^{N+1})| < \tau^{-1} \cdot |\eta_{n+m-j+\kappa}(\beta^{N}) - \eta_{n+m-j+\kappa}(\beta^{N+1})|
\end{equation*}
repeatedly until one of the cases $i \geq n+m-\kappa$ above is reached.

We now proceed to prove \ref{lem:u_diff_bounded:1}. If $i=n+m$ then as before for all $N \in \mathbb{Z}^{+}$ we have 
\begin{equation*}
    1 < \dfrac{\eta_{i}(\beta^{N})}{\eta_{i}(\beta^{N+1})} < 1 + g(N)
\end{equation*}
and so
\begin{equation*}
    \left| \log \dfrac{\eta_{i}(\beta^{N})}{\eta_{i}(\beta^{N+1})} \right| < \log \left( 1 + g(N) \right) < \log(e^{g(N)}) = g(N) < \dfrac{g(N)}{\ell_{1}} \cdot \tau^{1+\frac{i-(n+m)}{\kappa}},
\end{equation*}
where we used the inequality $1 + \xi < e^{\xi}$ for any $\xi>0$.

Let $1 \leq i < n+m$, and assume $\eta_{i}(\beta^{N}) > \eta_{i}(\beta^{N+1})$; the complementary case is almost identical. We have $\eta_{i}(\beta^{N}), \eta_{i}(\beta^{N+1}) > \ell_{1}$ since they are in $J_{1}$ and so from \ref{eq:1},
\begin{equation*}
    \left| \dfrac{\eta_{i}(\beta^{N}) - \eta_{i}(\beta^{N+1})}{\eta_{i}(\beta^{N+1})} \right| < \dfrac{g(N)}{\ell_{1}} \cdot \tau^{1+\frac{i-(n+m)} {\kappa}} \text{\hspace{5px} and so \hspace{5px}}1 < \dfrac{\eta_{i}(\beta^{N})}{\eta_{i}(\beta^{N+1})} < 1 + \dfrac{g(N)}{\ell_{1}} \cdot \tau^{1+\frac{i-(n+m)}{\kappa}}
\end{equation*}
As above, this gives the desired inequality.
\end{proof}

\begin{proof}[Proof of \ref{lem:u_diff_bounded:2}]
Note that $\eta_{i}(\gamma_{1})$ and $\eta_{i}(\gamma_{2})$ agree on the first digit of the symbolic expansion for each $1 \leq i < n+m$; we write this digit as $N_{i}$.

We will first show that for all $i$ we have
\begin{equation} \label{eq:2}
    \dfrac{\eta_{i}(\gamma_{1})}{\eta_{i}(\gamma_{2})} < 1 + m_{g} \cdot \sigma \cdot \tau^{1 + \frac{i+1-(n+m)}{\kappa}}. \tag{$*_{2}$}
\end{equation}
If $\eta_{i}(\gamma_{1}) < \eta_{i}(\gamma_{2})$ this is obviously true as the second term on the right hand side is greater than zero, but to unify the argument we will not split into cases.

For $i = n+m-1$, using assumption \ref{G:function g} on $G$ we compute
\begin{equation*}
    \dfrac{\eta_{n+m-1}(\gamma_{1})}{\eta_{n+m-1}(\gamma_{2})} < \dfrac{r_{N_{n+m-1}}}{\ell_{N_{n+m-1}}} < 1 + g(N_{n+m-1}) \leq 1 + m_{g} < 1 + m_{g} \cdot \sigma \cdot \tau^{1+\frac{i+1-(n+m)}{\kappa}}.
\end{equation*}

For future use, note that the above inequalities yield
\begin{equation*}
    \eta_{n+m-1}(\gamma_{1}) - \eta_{n+m-1}(\gamma_{2}) \leq \eta_{n+m-1}(\gamma_{2}) \cdot m_{g} < m_{g}.
\end{equation*}
Now let $1 \leq j < \kappa$. We want to show \ref{eq:2} for $i = n+m-1-j$. Similar to the proof of \ref{lem:u_diff_bounded:1}, we note that $|(G_{N_{n+m-1-j}}^{-1})'(x)| < \tau_{N_{n+m-1-j},1}^{-1}$ for all $x \in J_{N_{n+m-1-j}}$ and \ref{G:expansivity} gives $|(G_{N_{t}}^{-1})'(x)| < 1$ for all $x \in J_{N_{t}}$, $t$ arbitrary, so repeatedly applying the Mean Value Theorem,
\begin{align*}
    &\hspace{18px} \eta_{n+m-1-j}(\gamma_{1}) - \eta_{n+m-1-j}(\gamma_{2}) \\
    &< |G_{N_{n+m-1-j}}^{-1} \circ \cdots \circ G_{N_{n+m-2}}^{-1}(\eta_{n+m-1}(\gamma_{1})) - G_{N_{n+m-1-j}}^{-1} \circ \cdots \circ G_{N_{n+m-2}}^{-1}(\eta_{n+m-1}(\gamma_{2}))| \\
    &< \tau_{N_{n+m-1-j},1}^{-1} \cdot |G_{N_{n+m-j}}^{-1} \circ \cdots \circ G_{N_{n+m-2}}^{-1}(\eta_{n+m-1}(\gamma_{1})) - G_{N_{n+m-j}}^{-1} \circ \cdots \circ G_{N_{n+m-2}}^{-1}(\eta_{n+m-1}(\gamma_{2}))| \\
    &< \tau_{N_{n+m-1-j},1}^{-1} \cdot |\eta_{n+m-1}(\gamma_{1}) - \eta_{n+m-1}(\gamma_{2})| < \tau_{N_{n+m-1-j},1}^{-1} \cdot m_{g} \\
    &< \ell_{N_{n+m-1-j}} \cdot \sigma \cdot m_{g} < \eta_{n+m-1-j}(\gamma_{2}) \cdot \sigma \cdot m_{g} \cdot \tau^{1+\frac{i+1-(n+m)}{\kappa}}.
\end{align*}
Rearranging gives \ref{eq:2}. To show \ref{eq:2} holds for $i - l \cdot \kappa$ with $n+m-1-\kappa < i \leq n+m-1$ and $l \in \mathbb{Z}^{+}$, repeatedly apply the inequality
\begin{align*}
    &\hspace{18px} \eta_{i-j \cdot \kappa}(\gamma_{1}) - \eta_{i-l \cdot \kappa}(\gamma_{2}) \\
    &< |G_{N_{i-l \cdot \kappa}}^{-1} \circ \cdots \circ G_{N_{i-l \cdot \kappa + \kappa - 1}}^{-1}(\eta_{i-(l-1) \cdot \kappa}(\gamma_{1})) - G_{N_{i-l \cdot \kappa}}^{-1} \circ \cdots \circ G_{N_{i-l \cdot \kappa + \kappa - 1}}^{-1}(\eta_{i-(l-1) \cdot \kappa}(\gamma_{2}))| \\
    &= \left| \dfrac{1}{(G^{\kappa})'(\xi)} \right| \cdot |\eta_{i-(l-1) \cdot \kappa}(\gamma_{1}) - \eta_{i-(l-1) \cdot \kappa}(\gamma_{2})| < \tau^{-1} \cdot |\eta_{i-(l-1) \cdot \kappa}(\gamma_{1}) - \eta_{i-(l-1) \cdot \kappa}(\gamma_{2})|
\end{align*}
(where $\xi \in J_{N_{i-j \cdot \kappa}}$ is from the Mean Value Theorem and we made use of \ref{G:expansivity}) until one of the base cases above is reached.

We now suppose without loss of generality that $\eta_{i}(\gamma_{1}) \geq \eta_{i}(\gamma_{2})$. Using the inequality $1 + \xi < e^{\xi}$ for any $\xi>0$ with \ref{eq:2} gives
\begin{equation*}
    0 < \log \dfrac{\eta_{i}(\gamma_{1})}{\eta_{i}(\gamma_{2})} < \log \left( 1 + m_{g} \cdot \sigma \cdot \tau^{1 + \frac{i+1-(n+m)}{\kappa}} \right) < \log(e^{m_{g} \cdot \sigma \cdot \tau^{1 + \frac{i+1-(n+m)}{\kappa}}}) = m_{g} \cdot \sigma \cdot \tau^{1 + \frac{i+1-(n+m)}{\kappa}}.
\end{equation*}
On the other hand, the inequalities $\dfrac{1}{1-\xi} > e^{\xi}$ and $1 > \dfrac{\eta_{i}(\gamma_{2})}{\eta_{i}(\gamma_{1})} > \dfrac{1}{1+m_{g} \cdot \sigma \cdot \tau^{1 + \frac{i+1-(n+m)}{\kappa}}}$ give
\begin{equation*}
    0 > \log \dfrac{\eta_{i}(\gamma_{2})}{\eta_{i}(\gamma_{1})} > \log \left( \dfrac{1}{1+m_{g} \cdot \sigma \cdot \tau^{1 + \frac{i+1-(n+m)}{\kappa}}} \right) > \log(e^{-m_{g} \cdot \sigma \cdot \tau^{1 + \frac{i+1-(n+m)}{\kappa}}}) = -m_{g} \cdot \sigma \cdot \tau^{1 + \frac{i+1-(n+m)}{\kappa}}.
\end{equation*}
Combining these together yields
\begin{equation*}
    \left| \log \dfrac{\eta_{i}(\gamma_{1})}{\eta_{i}(\gamma_{2})} \right| < m_{g} \cdot \sigma \cdot \tau^{1 + \frac{i+1-(n+m)}{\kappa}}.
\end{equation*}
\end{proof}

\begin{proof}[Proof of \ref{lem:u_diff_bounded:3}]
Let $(\delta_{m})_{m=1}^{\infty}$ be a sequence of positive numbers such that $|\eta_{i}(\beta^{I})-\eta_{i}(\omega)| < \delta(m)$ for all $0 < i < n+m-1$ and $N \in \mathbb{N}$, and $\lim_{m \to \infty} \delta(m) = 0$. We claim $\eta_{i}(\beta^{I})$ is uniformly bounded away from $0$ and $1$ for all $I$ and $0<i<n+m-1$. Note that
\begin{align*}
    &\eta_{i}(\beta^{I}) = [1, 1, \ldots] \text{\hspace{50px} for $n<i<n+m-1$,} \\
    &\eta_{i}(\beta^{I}) = [a_{n}, 1, \ldots] \text{\hspace{44px} for $i=n$, and} \\
    &\eta_{i}(\beta^{I}) = [a_{i}, a_{i+1}, \ldots] \text{\hspace{32px} for $0<i<n$.}
\end{align*}
Call these first two digits $p_{i}$ and $q_{i}$, noting that $p_{i} = q_{i} = p_{n+1} = q_{n+1}$ if $n<i<n+m-1$. Let
\begin{align*}
    &\eta_{\min} = \min_{0 < i < n+m-1} [p_{i}, q_{i}, \ldots] = \min_{0 < i \leq n+1} [p_{i}, q_{i}, \ldots] = \min_{0 < i \leq n+1} (G_{p_{i}}^{-1} \circ G_{q_{i}}^{-1})((s_{0}, s_{1})), \\
    &\eta_{\max} = \max_{0 < i < n+m-1} [p_{i}, q_{i}, \ldots] = \max_{0 < i \leq n+1} [p_{i}, q_{i}, \ldots] = \max_{0 < i \leq n+1} (G_{p_{i}}^{-1} \circ G_{q_{i}}^{-1})((s_{0}, s_{1})).
\end{align*}
It is clear that if $p_{i} \neq 1$ or $q_{i} \neq 1$ then $s_{0} < \eta_{\min} < \eta_{\max} < s_{1}$. If $p_{i}=q_{i}=1$, by assumption \ref{G:decreasing} on $G$ we have
\begin{equation*}
    (G_{p_{i}}^{-1} \circ G_{q_{i}}^{-1})((s_{0}, s_{1})) = G_{p_{i}}^{-1}(G_{q_{i}}^{-1}(s_{1}), s_{1}) = (G_{q_{i}}^{-1}(s_{1}), (G_{p_{i}}^{-1} \circ G_{q_{i}}^{-1})(s_{1}))
\end{equation*}
with $s_{0} < G_{q_{i}}^{-1}(s_{1}) < (G_{p_{i}}^{-1} \circ G_{q_{i}}^{-1})(s_{1}) < s_{1}$. Hence $s_{0} < \eta_{\min} < \eta_{\max} < s_{1}$ in all cases. Thus
\begin{equation*}
    \eta_{i}(\beta^{I}) \in [\eta_{\min}, \eta_{\max}] \subseteq (s_{0}, s_{1}).
\end{equation*}
Clearly $\eta_{i}(\omega) \in [\eta_{\min}, \eta_{max}]$ as well. Now letting
\begin{equation*}
    \tilde{u} = \inf_{x \in [\eta_{\min}, \eta_{\max}]}u(x),
\end{equation*}
we have $\tilde{u}>0$ since $u$ is continuous and positive on the compact subset $[\eta_{\min}, \eta_{\max}]$.

Since $u$ is continuous, it is uniformly continuous on $[\eta_{\min}, \eta_{\max}]$, hence admits a continuous and strictly increasing modulus of continuity $\sigma: [0, \infty) \to [0, \infty)$ such that $\lim_{t \to 0} \sigma(t) = \sigma(0) = 0$ and for all $x_{1}, x_{2} \in [\eta_{\min}, \eta_{\max}], |u(x_{1})-u(x_{2})| \leq \sigma(|x_{1}-x_{2}|)$.

Now fix $m$ large enough that $1 - \dfrac{\sigma(\delta(m))}{\tilde{u}} > 0$, and let $0 < i < n+m-1$ and $N \in \mathbb{N}$. Then
\begin{align*}
    &|u(\eta_{i}(\beta^{I}))-u(\eta_{i}(\omega))| \leq \sigma(|\eta_{i}(\beta^{I})-\eta_{i}(\gamma_{2})|) < \sigma(\delta(m)) \\[0.4em]
    &\implies \left| \dfrac{u(\eta_{i}(\beta^{I}))}{u(\eta_{i}(\omega))} -1 \right| < \dfrac{\sigma(\delta(m))}{u(\eta_{i}(\omega))} \leq \dfrac{\sigma(\delta(m))}{\tilde{u}} \\[0.4em]
    &\implies \left| \log \left( \dfrac{u(\eta_{i}(\beta^{I}))}{u(\eta_{i}(\omega))} \right) \right| < \max \left\{ \left| \log \left( 1-\dfrac{\sigma(\delta(m))}{\tilde{u}} \right) \right|, \left| \log \left( 1+\dfrac{\sigma(\delta(m))}{\tilde{u}} \right)  \right| \right\} =: f(m),
\end{align*}
with 
\begin{equation*}
    \lim_{m \to \infty}f(m) = 0 \text{\hspace{10px} since \hspace{10px}} \lim_{\delta(m) \to 0^{+}} \log \left( 1 \pm  \dfrac{\sigma(\delta(m))}{\tilde{u}} \right) = 0
\end{equation*}
and $f(m)>0$ since $\dfrac{\sigma(\delta(m))}{\tilde{u}} > 0$ for all $m$.
\end{proof}

\begin{lem}\label{lem:consecutive_eta_bound}
There is some $\rho<1$ such that for any $k>1$ and $x \in \Lambda$, $\eta_{k-1}(x) \cdot \eta_{k}(x) < \rho$.
\end{lem}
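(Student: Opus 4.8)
The plan is to reduce the bound to a compactness argument on the single interval $J_1$, after dispatching all other branches by a crude estimate. Fix $x=[a_1,a_2,\dots]\in\Lambda$ and $k>1$, and set $t:=\eta_{k-1}(x)$, so that $t\in J_{a_{k-1}}=(\ell_{a_{k-1}},r_{a_{k-1}})$ and $\eta_k(x)=G(t)=G_{a_{k-1}}(t)$. By \ref{G:I_i surjects} we have $\eta_k(x)\in(s_0,s_1)$, hence $\eta_k(x)<s_1\le 1$; also $t<r_{a_{k-1}}\le s_1\le 1$. I would then split according to the value of $a_{k-1}$.

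If $a_{k-1}\ge 2$, I would use that the intervals $J_i$ are pairwise disjoint with $\ell_1>\ell_2>\cdots$: since $\ell_{a_{k-1}}<\ell_1$, disjointness of $J_{a_{k-1}}$ and $J_1$ forces $r_{a_{k-1}}\le\ell_1$, so $t<\ell_1$ and $\eta_{k-1}(x)\eta_k(x)<\ell_1\cdot 1=\ell_1<1$. If $a_{k-1}=1$, then $t\in J_1$ and, by \ref{G:decreasing}, $G_1$ is decreasing; being a monotone surjection of $(\ell_1,r_1)$ onto $(s_0,s_1)$, it extends to a continuous map $\overline{G_1}\colon[\ell_1,r_1]\to[s_0,s_1]$ with $\overline{G_1}(\ell_1)=s_1$ and $\overline{G_1}(r_1)=s_0$. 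Consider the continuous function $t\mapsto t\,\overline{G_1}(t)$ on the compact interval $[\ell_1,r_1]$. For every $t$ there one has $t\le r_1\le s_1\le 1$ and $\overline{G_1}(t)\le\overline{G_1}(\ell_1)=s_1\le 1$; if $t<1$ this already gives $t\,\overline{G_1}(t)\le t<1$, while if $t=1$ — which forces $r_1=s_1=1$ and $t=r_1$ — then $t\,\overline{G_1}(t)=\overline{G_1}(r_1)=s_0<s_1=1$. Thus $t\,\overline{G_1}(t)<1$ throughout $[\ell_1,r_1]$, so by compactness $\rho_1:=\max_{[\ell_1,r_1]}t\,\overline{G_1}(t)<1$, and $\eta_{k-1}(x)\eta_k(x)=\eta_{k-1}(x)\,\overline{G_1}(\eta_{k-1}(x))\le\rho_1$.

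Finally I would take any $\rho$ with $\max\{\ell_1,\rho_1\}<\rho<1$; by the two cases above this $\rho$ works uniformly in $k$ and $x$, which proves the lemma. The one point requiring care is the passage from the pointwise inequality $t\,\overline{G_1}(t)<1$ to a uniform gap below $1$: this is exactly why one passes to the closed interval and invokes the continuous extension of $G_1$, and why the boundary value at $r_1$ must be inspected separately — the assumption $s_1=1$ is permitted (as for the Gauss map), so the naive estimate $t\,G_1(t)<s_1^2$ does not by itself yield a bound bounded away from $1$.
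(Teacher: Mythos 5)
Your proof is correct, and rests on the same underlying observation as the paper's: the only danger is when $\eta_{k-1}(x)$ lies close to the right endpoint of the domain, and there the decreasing branch $G_1$ forces $\eta_k(x)$ to be small. The implementations differ modestly. The paper first disposes of $s_1<1$ by taking $\rho=s_1$, and then (for $s_1=1$) chooses an explicit threshold $a_*\in J_1$ with $b_*:=G_1(a_*)<1$, giving the two bounds $\eta_{k-1}\eta_k<a_*$ when $\eta_{k-1}<a_*$ and $\eta_{k-1}\eta_k<b_*$ otherwise, so $\rho=\max\{a_*,b_*\}$. You instead cut at $\ell_1$: for $a_{k-1}\geq2$, disjointness and the ordering of the $J_i$ give $\eta_{k-1}<\ell_1$ at once; for $a_{k-1}=1$ you pass to the continuous extension of $G_1$ on $\overline{J_1}=[\ell_1,r_1]$ and invoke compactness to get a uniform gap below $1$. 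Both give the right conclusion. The paper's version is slightly more elementary (no appeal to the extreme value theorem, no need to justify the continuous extension), while yours is a bit more systematic in handling the $s_1<1$ and $s_1=1$ cases together, and your boundary check at $t=r_1=1$ is exactly the right point to guard. One small thing you handled correctly but is worth keeping in mind: because $\rho_1$ is a maximum over the closed interval, the inequality $\eta_{k-1}\eta_k\leq\rho_1$ may not be strict on $\Lambda$, so your choice of $\rho$ strictly between $\max\{\ell_1,\rho_1\}$ and $1$ is indeed necessary, not just cautious.
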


\begin{proof}
If $s_{1}<1$ we can take $\rho = s_{1}$, so suppose $s_{1} = 1$. By the decreasing assumption \ref{G:decreasing} on $G_{1}$, it is easily checked that there are some $a_{*}, b_{*} \in (s_{0}, s_{1})$ for which $G(S \cap [a_{*}, 1)) \subseteq (s_{0}, b_{*})$.
\begin{itemize}
    \item If $G^{k-1}(x)<a_{*}$, then since $G^{k}(x) < 1$ we have $G^{k}(x) \cdot G^{k-1}(x) < a_{*} < 1$.
    \item If $a_{*} \leq G^{k-1}(x) < 1$, then $G^{k}(x) = G(G^{k-1}(x)) < b_{*}$ and so $G^{k}(x) \cdot G^{k-1}(x) < b_{*} < 1$.
\end{itemize}
Taking $\rho = \max\{a_{*}, b_{*}\}$ completes the proof.
\end{proof}

\subsection{Proof of Lemma~\ref{lem:5.19} and Lemma~\ref{lem:5.20}}

\begin{lem}\label{lem:u_diff_bounded}
There is a constant $D$ such that for any $\gamma_{1}, \gamma_{2} \in \Lambda$ with $\gamma_{1} = [a, \ldots]$, $\gamma_{2} = [a, \ldots]$ for some $a \in \mathbb{Z}^{+}$, we have
\begin{equation*}
    \left| u\left( \gamma_{1} \right) - u\left( \gamma_{2} \right) \right| < D.
\end{equation*}
\end{lem}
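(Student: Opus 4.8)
The plan is to bound the difference $|u(\gamma_1) - u(\gamma_2)|$ by splitting the segment from $\gamma_1$ to $\gamma_2$ and applying the Mean Value Theorem, using the fact that both numbers share the first symbol $a$, which forces them both to lie in the interval $J_a = (\ell_a, r_a)$. Concretely, since $u$ is $\mathcal C^1$ on $(s_0, s_1)$, there is a point $\xi$ between $\gamma_1$ and $\gamma_2$ with $|u(\gamma_1) - u(\gamma_2)| = |u'(\xi)| \cdot |\gamma_1 - \gamma_2|$. Both $\gamma_1, \gamma_2 \in J_a$, so $\xi \in J_a$ as well, hence $|\gamma_1 - \gamma_2| < r_a - \ell_a$ and $\xi - s_0 > \ell_a - s_0 \geq \ell_a$ (recall $s_0 = \inf S$, which for the Gauss map and the $A_\alpha$ maps is $0$; more generally one has $\ell_a > s_0$ so one should write $\xi - s_0 \ge \ell_a - s_0$ and carry that through).

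By assumption \ref{u:3} on $u$, there is $C>0$ with $|u'(\xi)| < C/(\xi - s_0)^2$. Combining these,
\begin{equation*}
    |u(\gamma_1) - u(\gamma_2)| < \frac{C}{(\xi - s_0)^2} \cdot (r_a - \ell_a) \leq \frac{C \,(r_a - \ell_a)}{(\ell_a - s_0)^2}.
\end{equation*}
Now invoke assumption \ref{G:l_i and r_i condition squared} on $G$: there is a constant $D_0$, independent of the index $a$, with $(r_a - \ell_a)/\ell_a^2 < D_0$. If $s_0 = 0$ this directly gives $|u(\gamma_1) - u(\gamma_2)| < C D_0 =: D$. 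For the general case $s_0 \ge 0$, one notes that only finitely many indices $a$ have $\ell_a - s_0$ comparably small relative to $\ell_a$ fails — more simply, since $\ell_a \to s_0$ and $r_a - \ell_a \to 0$, the quantity $(r_a - \ell_a)/(\ell_a - s_0)^2$ is bounded for large $a$ by a comparison with \ref{G:l_i and r_i condition squared}, and for the remaining finitely many small $a$ the bound is automatic because $u'$ is continuous hence bounded on each compact $[\ell_a, r_a] \subset (s_0, s_1)$; take $D$ to be the maximum of $CD_0$ (or the appropriate constant) and these finitely many bounds.

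The only mild subtlety — and the step I would be most careful about — is reconciling the "$\ell_a^2$" in assumption \ref{G:l_i and r_i condition squared} with the "$(\ell_a - s_0)^2$" coming from \ref{u:3}; in the principal examples $s_0 = 0$ and the two coincide, so the cleanest exposition simply takes $s_0 = 0$ (as holds for all the maps under consideration, since the relevant $\Lambda$ sits in an interval with left endpoint $0$) and writes the one-line bound $|u(\gamma_1)-u(\gamma_2)| < C(r_a-\ell_a)/\ell_a^2 < CD =: D'$. Everything else is a direct application of the Mean Value Theorem plus the two quoted hypotheses, so no real obstacle arises.
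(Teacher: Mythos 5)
Your argument is correct and arrives at the same estimate as the paper's, but via the Mean Value Theorem rather than Taylor's theorem with remainder, which makes it cleaner. The paper expands $u(\gamma_2)-u(\gamma_1)$ to first order at $\gamma_1$, bounds the linear term exactly as you do (via assumptions \ref{u:3} and \ref{G:l_i and r_i condition squared}), and then separately asserts that $h(a)\cdot|R_1(x(a))|\to 0$ as $a\to\infty$ to handle the remainder $R_1$. That last step is a soft spot: there is no hypothesis controlling $u''$ or a uniform Peano estimate as the expansion point $\gamma_1$ varies with $a$, so the claimed decay of the remainder term is not fully transparent. Your MVT route evaluates $u'$ at a single intermediate point $\xi\in J_a$ and gets
\begin{equation*}
|u(\gamma_1)-u(\gamma_2)| = |u'(\xi)|\cdot|\gamma_1-\gamma_2| < \frac{C\,(r_a-\ell_a)}{(\ell_a-s_0)^2}
\end{equation*}
in one step, requiring only $u\in\mathcal{C}^1$, which is exactly what is given; no remainder term needs to be controlled. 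Your explicit flagging of the discrepancy between the $\ell_a^{2}$ in assumption \ref{G:l_i and r_i condition squared} and the $(\ell_a-s_0)^{2}$ that naturally comes out of assumption \ref{u:3} is also a genuine observation — the paper silently writes $(\ell_a-s_0)^{2}$ and cites \ref{G:l_i and r_i condition squared}, which literally controls only $(r_a-\ell_a)/\ell_a^2$; as you note, this is harmless because $s_0=0$ in every example of $G$ treated (Gauss and $A_\alpha$), but a careful general statement would either posit $s_0=0$ or restate \ref{G:l_i and r_i condition squared} with $(\ell_i-s_0)^2$ in the denominator.
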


\begin{proof}
If $\gamma_{1} = \gamma_{2}$ we are done; suppose $\gamma_{1} < \gamma_{2}$. Define $x(a) := \gamma_{1}$ and $h(a) = \gamma_{2} - \gamma_{1} > 0$. By Taylor's theorem,
\begin{equation*}
    \left| u(\gamma_{1}) - u(\gamma_{2}) \right| = h(a) \cdot \left| u'(x(a)) + R_{1}(x(a)) \right| \leq \dfrac{h(a)}{(x(a)-s_{0})^{2}} + h(a) \cdot |R_{1}(x(a))|,
\end{equation*}
where we used assumption \ref{u:3} on $u$ and where $R_{1}$ is the first order Taylor remainder. But now
\begin{equation*}
    \dfrac{h(a)}{(x(a)-s_{0})^{2}} = \dfrac{\gamma_{2}-\gamma_{1}}{(\gamma_{1}-s_{0})^{2}} < \dfrac{r_{a}-\ell_{a}}{(\ell_{a}-s_{0})^{2}} < C_{1}
\end{equation*}
for some constant $C_{1}$ independent of $a$, where we used assumption \ref{G:l_i and r_i condition squared} on $G$. Since 
\begin{equation*}
    h(a) \cdot |R(x(a))| \to 0 \text{ as } a \to \infty,
\end{equation*}
$h(a) \cdot |R(x(a))| < C_{2}$ for some constant $C_{2}$. Taking $D = C_{1} + C_{2}$ completes the proof.
\end{proof}

We are now ready to prove Lemmas \ref{lem:5.19} and \ref{lem:5.20}.

\begin{proof}[Proof of Lemma~\ref{lem:5.19}]
We will first show that such an $m_{0}$ exists, and then give an algorithm to compute it.

Note that the sum in the expression for $\Phi(\omega)$ converges, because its tail converges:
\begin{equation*}
    \sum_{i=n+1}^{\infty} \left( \eta_{0}(\omega) \cdots \eta_{i-1}(\omega) \right)^{\nu} \cdot u(\eta_{i}(\omega)) \leq u(\eta_{n+1}(\omega)) \cdot (\eta_{n}(\omega))^{\nu} \cdot \sum_{i=n+1}^{\infty} (\eta_{n+1}(\omega))^{\nu} < \infty
\end{equation*}
since $(\eta_{n+1}(\omega))^{\nu} < s_{1} \leq 1$ for $\nu > 0$. Hence there is an $m_{1}>1$ such that the tail of the sum
\begin{equation*}
    \sum_{i \geq n+m_{1}} \left( \eta_{1}(\omega) \cdots \eta_{i-1}(\omega) \right)^{\nu} \cdot u(\eta_{i}(\omega)) < \dfrac{\varepsilon}{2}.
\end{equation*}
If needed, additionally make $m_{1}$ large enough that Lemma~\ref{lem:log_ratio_estimates} \ref{lem:u_diff_bounded:3} applies for all $m \geq m_{1}$. We will show how to choose $m_{0}>m_{1}$ to satisfy the conclusion of the lemma.

By Lemma~\ref{lem:log_ratio_estimates} \ref{lem:u_diff_bounded:2} and \ref{lem:u_diff_bounded:3}, for any $\beta^{I}$ and any $i \leq n+m_{1}$ we have

\begin{align*}
    &\hspace{12px} \left| \log \dfrac{ \left( \eta_{0}(\beta^{I}) \cdots \eta_{i-1}(\beta^{I}) \right)^{\nu} \cdot u(\eta_{i}(\beta^{I})) }{ \left( \eta_{0}(\omega) \cdots \eta_{i-1}(\omega) \right)^{\nu} \cdot u(\eta_{i}(\omega)) } \right| = \left| \nu \cdot \log \frac{ \eta_{0}(\beta^{I}) \cdots \eta_{i-1}(\beta^{I}) }{ \eta_{0}(\omega) \cdots \eta_{i-1}(\omega) } + \log \frac{ u(\eta_{i}(\beta^{I})) }{ u(\eta_{i}(\omega)) } \right| \\
    &< \nu \cdot \left( \sum_{j=1}^{i-1} m_{g} \cdot \sigma \cdot \tau^{1 + \frac{j+1-(n+m)}{\kappa}} \right) + f(m) < \nu \sigma \cdot m_{g} \cdot \tau^{1 + \frac{2-(n+m)}{\kappa}} \left( \dfrac{\tau^{i}-1}{\tau^{1/\kappa}-1} \right) + f(m) \\
    &< \nu \sigma \cdot m_{g} \cdot \tau^{1 + \frac{2-(n+m)}{\kappa}} \left( \dfrac{\tau^{n+m_{1}}}{\tau^{1/\kappa}-1} \right) + f(m) \leq \nu \sigma \cdot m_{g} \cdot \dfrac{\tau^{1+m_{1}+\frac{2-m}{\kappa}}}{\tau^{1/\kappa}-1} + f(m).
\end{align*}
We can choose $m_{0}$ sufficiently large so that $\exp(\nu \sigma \cdot m_{g} \cdot \dfrac{\tau^{1+m_{1}+\frac{2-m}{\kappa}}}{\tau^{1/\kappa}-1} + f(m)) > 1 - \dfrac{\varepsilon}{2 \cdot \Phi(\omega)}$ for $m \geq m_{0}$ and also
\begin{equation*}
    \left( \eta_{1}(\beta^{I}) \cdots \eta_{i-1}(\beta^{I}) \right)^{\nu} \cdot u(\eta_{i}(\beta^{I})) > \left( 1 - \dfrac{\varepsilon}{2 \cdot \Phi(\omega)} \right) \cdot \left( \eta_{1}(\omega) \cdots \eta_{i-1}(\omega) \right)^{\nu} \cdot u(\eta_{i}(\omega))
\end{equation*}
for $i \leq n+m_{1}$. Now, for any $\beta^{I}$ we have
\begin{align*}
    \Phi(\beta^{I}) &\geq \sum_{i=1}^{n+m_{1}-1} \left( \eta_{1}(\beta^{I}) \cdots \eta_{i-1}(\beta^{I}) \right)^{\nu} \cdot u(\eta_{i}(\beta^{I})) \\
&> \sum_{i=1}^{n+m_{1}-1} \left( 1 - \dfrac{\varepsilon}{2 \cdot \Phi(\omega)} \right) \cdot \left( \eta_{1}(\omega) \cdots \eta_{i-1}(\omega) \right)^{\nu} \cdot u(\eta_{i}(\omega))  \\
&> \left( 1 - \dfrac{\varepsilon}{2 \cdot \Phi(\omega)} \right) \left( \Phi(\omega) - \dfrac{\varepsilon}{2} \right) > \Phi(\omega) - \varepsilon.
\end{align*}
Since the symbolic representation of $\omega$ ends with all ones, for any specific $(a_{1}, a_{2}, \ldots, a_{n})$ we can compute $\Phi(\omega)$ by Proposition~\ref{prop:computes_Brjuno_value}. This allows us to compute $m_{1}$ by iteratively checking whether 
\begin{equation*}
    \sum_{i \geq n+m_{1}} \left( \eta_{0}(\omega) \cdots \eta_{i-1}(\omega) \right)^{\nu} \cdot u(\eta_{i}(\omega)) < \dfrac{\varepsilon}{2}
\end{equation*}
for each $m_{1}>1$. Then for each candidate $m_{0} > m_{1}$, since we can compute $\Phi(\omega)$ and $\left( \eta_{0}(\omega) \cdots \eta_{i-1}(\omega) \right)^{\nu} \cdot u(\eta_{i}(\omega))$ and left-compute $\left( \eta_{0}(\beta^{I}) \cdots \eta_{i-1}(\beta^{I}) \right)^{\nu} \cdot u(\eta_{i}(\beta^{I}))$ for each individual $i$ by assumption \ref{u:5} on $u$ and computability assumption \ref{G:computability} on $G$, we can iteratively check whether the following two conditions hold:
\begin{enumerate}[label={(\arabic*)}]
    \item \label{L2:1} $\exp \left(\nu \sigma \cdot m_{g} \cdot \dfrac{\tau^{1+m_{1}+\frac{2-m}{\kappa}}}{\tau^{1/\kappa}-1} + f(m) \right) > 1 - \dfrac{\varepsilon}{2 \cdot \Phi(\omega)}$
    \item \label{L2:2} For every $i \leq n+m_{1}$,
    \begin{equation*}
    \left( \eta_{0}(\beta^{I}) \cdots \eta_{i-1}(\beta^{I}) \right)^{\nu} \cdot u(\eta_{i}(\beta^{I})) > \left( 1 - \dfrac{\varepsilon}{2 \cdot \Phi(\omega)} \right) \cdot \left( \eta_{0}(\omega) \cdots \eta_{i-1}(\omega) \right)^{\nu} \cdot u(\eta_{i}(\omega)).
\end{equation*}
\end{enumerate}
We only need to be able to left-compute $\left( \eta_{0}(\beta^{I}) \cdots \eta_{i-1}(\beta^{I}) \right)^{\nu} \cdot u(\eta_{i}(\beta^{I}))$ because of the strict inequality in condition \ref{L2:2}.
Since we have shown there must exist $m_{1}>m_{0}$ for which these hold, this algorithm terminates and we eventually find the $m_{1}$ we are looking for.
\end{proof}

\begin{proof}[Proof of Lemma~\ref{lem:5.20}]
Let $i \leq k-1$ for some $k>1$. We will first show that 
\begin{equation*}
    \rho_{1} < \dfrac{u(\eta_{i}(\omega_{k}))}{u(\eta_{i}(\omega))} < \rho_{2}
\end{equation*}
for some $0 < \rho_{1} < \rho_{2} < \infty$, independent of $i$ and $k$. Since $i \leq k-1$, the symbolic representations of $\eta_{i}(\omega_{k}), \eta_{i}(\omega)$ coincide in the first two terms, call them $a$ and $b$. So, we can write
\begin{align*}
    &x := \eta_{i}(\omega_{k}) = [a, b, T_{1}], \\
    &y := \eta_{i}(\omega) = [a, b, T_{2}],
\end{align*}
where $T_{1}, T_{2}$ are the infinite tails of the symbolic expansions of $ \eta_{i}(\omega_{k}),  \eta_{i}(\omega)$ respectively. Since $u$ is $\mathcal{C}^{1}$ we have
\begin{equation*}
    u(y) = u(x) + \int_{x}^{y} u'(t) \,dt
\end{equation*}
and so by assumption \ref{u:3} on $u$ and \ref{G:l_i and r_i condition squared} on $G$,
\begin{align*}
    \left| \dfrac{u(y)}{u(x)} - 1 \right| &= \dfrac{1}{u(x)} \left| \int_{x}^{y} u'(t) \,dt \right| \leq \dfrac{1}{u(x)} \left| \int_{x}^{y} \dfrac{C}{(t-s_{0})^{2}} \,dt \right| = \dfrac{C}{u(x)} \left| \dfrac{1}{x-s_{0}} - \dfrac{1}{y-s_{0}} \right| \\[0.8em]
    &< \dfrac{C}{u(x)} \left( \dfrac{1}{\ell_{i}-s_{0}} - \dfrac{1}{r_{i}-s_{0}} \right) < \dfrac{C}{u(x)} \cdot \dfrac{r_{i} - \ell_{i}}{(\ell_{i}-s_{0})^{2}} < \dfrac{\tilde{C}}{u(x)}.
\end{align*}
for some constant $\tilde{C}>0$. By assumption \ref{u:1} on $u$, there is some $l_{1} \in (0, 1)$ such that $\dfrac{1}{u(\xi)} < \dfrac{1}{2\tilde{C}}$ for $\xi \in (0, l_{1})$. Thus if $a$ is large enough such that $x, y \in (0, l_{1})$, say $a \geq N_{1} \in \mathbb{Z}^{+}$, then
\begin{equation*}
    \left| \dfrac{u(x)}{u(y)} - 1 \right| < \dfrac{\tilde{C}}{2\tilde{C}} = \dfrac{1}{2}.
\end{equation*}
If $1 < a < N_{1}$, then $x, y \in [l_{1}', l_{2}']$ for some $0<l_{1}'<l_{2}'<1$, thus $u(x), u(y)$ are bounded away from $0$ and $\infty$, so $\rho_{1}' < \frac{u(x)}{u(y)} < \rho_{2}'$ for some $0 < \rho_{1}' < \rho_{2}' < \infty$.

Now suppose $a=1$. If $s_{1}<1$, then $x, y \in J_{1}$ with $J_{1}$ bounded away from $0$ and $1$, thus $u$ is bounded away from $0$ and infinity on $J_{1}$. Now suppose $s_{1}=1$. It follows immediately from assumption \ref{u:2} on $u$ that for large enough $b$, say $b \geq N_{2}$, $\frac{u(x)}{u(y)}$ is bounded away from $0$ and infinity. Therefore $\rho_{1}'' < \frac{u(x)}{u(y)} < \rho_{2}''$ for some $0<\rho_{1}''<\rho_{2}''<\infty$.

Putting all this together, letting $\rho_{1} = \min \{ \frac{1}{2}, \rho_{1}', \rho_{1}'' \} > 0$ and $\rho_{2} = \max \{ \frac{1}{2}, \rho_{2}', \rho_{2}'' \} < \infty$ we have for any $\omega_{k}$ and $\omega$ that $\rho_{1} < \frac{u(\omega_{k})}{u(\omega)} < \rho_{2}$ and so $\left| \log \frac{u(\omega_{k})}{u(\omega)} \right| < D_{1} < \infty$ for some constant $D_{1}>0$.

Returning to the proof of the Lemma, since the sum in the expression for $\Phi(\omega)$ converges we can split this sum as
\begin{equation*}
    \Phi(\omega) = \underbrace{\sum_{i=1}^{s} \left( \eta_{0}(\omega) \cdots \eta_{i-1}(\omega) \right)^{\nu} \cdot u(\eta_{i}(\omega))}_{\text{"head"}} + \underbrace{\sum_{i=s+1}^{\infty} \left( \eta_{0}(\omega) \cdots \eta_{i-1}(\omega) \right)^{\nu} \cdot u(\eta_{i}(\omega))}_{\text{"tail"}}
\end{equation*}
so that "tail" $ < \dfrac{\varepsilon}{2 \left( \exp \left( \nu \sigma \cdot m_{g} \cdot \frac{\tau}{\tau^{1/\kappa}-1} + D_{1} \right) \right)}$. If needed, additionally make $s$ large enough so that
\begin{equation*}
    \dfrac{\rho^{\nu s/2}}{1-\rho^{\nu/2}} < \dfrac{\varepsilon}{4 \left( \exp \left( \nu \sigma \cdot m_{g} \cdot \frac{\tau}{\tau^{1/\kappa}-1} \right) \cdot D_{2} + u(\varphi) \right)},
\end{equation*}
where $\varphi = [1, 1, \ldots]$, $\rho$ is as in Lemma~\ref{lem:consecutive_eta_bound}, and $D_{2}$ is the constant from Lemma~\ref{lem:u_diff_bounded}.

Since $\sum_{i=1}^{s} \left( \eta_{0}(\cdot) \cdots \eta_{i-1}(\cdot) \right)^{\nu} \cdot u(\eta_{i}(\cdot))$ is a finite sum of functions continuous on a neighbourhood of $\omega$, it is continuous on some interval containing $\omega$. Noting that $\omega_{k} \to \omega$, there is some $m>s$ such that for any $k \geq m$,
\begin{equation*}
    \sum_{i=1}^{s} \left( \eta_{0}(\omega_{k}) \cdots \eta_{i-1}(\omega_{k}) \right)^{\nu} \cdot u(\eta_{i}(\omega_{k})) < \sum_{i=1}^{s} \left( \eta_{0}(\omega) \cdots \eta_{i-1}(\omega) \right)^{\nu} \cdot u(\eta_{i}(\omega)) + \varepsilon/4.
\end{equation*}
We now want to bound the change from 
\begin{equation*}
    \left( \eta_{0}(\omega_{k}) \cdots \eta_{i-1}(\omega_{k}) \right)^{\nu} \cdot u(\eta_{i}(\omega_{k})) \text{\hspace{5px} to \hspace{5px}} \left( \eta_{0}(\omega) \cdots \eta_{i-1}(\omega) \right)^{\nu} \cdot u(\eta_{i}(\omega))
\end{equation*}
for $i>s$. We consider these terms individually for $s<i \leq k-1, i=k$, and $i \geq k+1$.

\begin{itemize}
    \item For $s<i \leq k-1$. Taking $n+m := k+1$ in Lemma~\ref{lem:log_ratio_estimates} \ref{lem:u_diff_bounded:2}, since $\omega_{k}$ and $\omega$ coincide in the first $k=n+m-1$ terms, we have
\begin{align*}
    &\left| \log \frac{ \left( \eta_{0}(\omega_{k}) \cdots \eta_{i-1}(\omega_{k}) \right)^{\nu} \cdot u(\eta_{i}(\omega_{k})) }{ \left( \eta_{0}(\omega) \cdots \eta_{i-1}(\omega) \right)^{\nu} \cdot u(\eta_{i}(\omega) )} \right| = \left| \nu \cdot \log \frac{\eta_{0}(\omega_{k}) \cdots \eta_{i-1}(\omega_{k}) }{\eta_{0}(\omega) \cdots \eta_{i-1}(\omega) } + \log \frac{u(\eta_{i}(\omega_{k}))}{u(\eta_{i}(\omega))} \right| \\
    &< \nu \cdot \left( \sum_{j=1}^{i-1} m_{g} \cdot \sigma \cdot \tau^{1 + \frac{j-k}{\kappa}} \right) + D_{1} < \nu \sigma \cdot m_{g} \cdot \dfrac{\tau^{1+\frac{i-k}{\kappa}}}{\tau^{1/\kappa}-1} + D_{1} \leq \nu \sigma \cdot m_{g} \cdot \dfrac{\tau^{1-\frac{1}{\kappa}}}{\tau^{1/\kappa}-1} + D_{1}.
\end{align*}
Hence in this case each term can increase by a factor of at most $\exp \left(\nu \sigma \cdot m_{g} \cdot \frac{\tau^{1-\frac{1}{\kappa}}}{\tau^{1/\kappa}-1} + D_{1} \right)$.

    \item For $i=k$. We have
    \begin{equation*}
        \left| u(\eta_{i}(\omega)) - u(\eta_{i}(\omega_{k})\right)| = \left| u\left( [a_{k}, \ldots] \right) - u\left( [a_{k}, \ldots] \right) \right| < D_{2},
    \end{equation*}
    where we had previously gotten $D_{2}$ from Lemma~\ref{lem:u_diff_bounded}. Now by Lemma~\ref{lem:log_ratio_estimates} \ref{lem:u_diff_bounded:2},
\begin{align*}
    \left| \log \frac{ \left( \eta_{0}(\omega_{k}) \cdots \eta_{i-1}(\omega_{k}) \right)^{\nu}}{ \left( \eta_{0}(\omega) \cdots \eta_{i-1}(\omega) \right)^{\nu}} \right|
    &= \nu \cdot \left| \log \frac{\eta_{0}(\omega_{k}) \cdots \eta_{i-1}(\omega_{k})}{\eta_{0}(\omega) \cdots \eta_{i-1}(\omega)} \right| < \nu \cdot \left( \sum_{j=1}^{i-1} m_{g} \cdot \sigma \cdot \tau^{1 + \frac{j-k}{\kappa}} \right) \\
    &< \nu \sigma \cdot m_{g} \cdot \dfrac{\tau^{1+\frac{i-k}{\kappa}}}{\tau^{1/\kappa}-1} = \nu \sigma \cdot m_{g} \cdot \dfrac{\tau}{\tau^{1/\kappa}-1},
\end{align*}
    so $\left( \eta_{0}(\omega_{k}) \cdots \eta_{i-1}(\omega_{k}) \right)^{\nu} < \exp \left( \nu \sigma \cdot m_{g} \cdot \frac{\tau}{\tau^{1/\kappa}-1} \right) \cdot \left( \eta_{0}(\omega) \cdots \eta_{i-1}(\omega) \right)^{\nu}$. Combining these estimates and using the value $\rho<1$ from Lemma~\ref{lem:consecutive_eta_bound}, we get
    \begin{align*}
        &\left( \eta_{0}(\omega_{k}) \cdots \eta_{i-1}(\omega_{k}) \right)^{\nu} \cdot u(\eta_{i}(\omega_{k})) \leq \exp \left( \nu \sigma \cdot m_{g} \cdot \frac{\tau}{\tau^{1/\kappa}-1} \right) \cdot \left( \eta_{0}(\omega) \cdots \eta_{i-1}(\omega) \right)^{\nu} \cdot \left( u(\eta_{i}(\omega)) + D_{2} \right) \\
        &< \exp \left( \nu \sigma \cdot m_{g} \cdot \frac{\tau}{\tau^{1/\kappa}-1} \right) \cdot \left( \eta_{0}(\omega) \cdots \eta_{i-1}(\omega) \right)^{\nu} \cdot u(\eta_{i}(\omega)) + \exp \left( \nu \sigma \cdot m_{g} \cdot \frac{\tau}{\tau^{1/\kappa}-1} \right) \cdot D_{2} \cdot \rho^{\nu (i-1)/2}.
    \end{align*}
    \item For $i \geq k+1$. Noting that $u(\eta_{i}(\omega_{k})) = u(\eta_{i}(\omega_{k}))$, by Lemma~\ref{lem:consecutive_eta_bound} we have
    \begin{equation*}
        \left( \eta_{0}(\omega_{k}) \cdots \eta_{i-1}(\omega_{k}) \right)^{\nu} \cdot u(\eta_{i}(\omega_{k})) < \rho^{\nu (i-1)/2} \cdot u(\varphi).
    \end{equation*}
\end{itemize}
Therefore for any $i>s$,
\begin{align*}
    &\left( \eta_{0}(\omega_{k}) \cdots \eta_{i-1}(\omega_{k}) \right)^{\nu} \cdot u(\eta_{i}(\omega_{k})) < \exp \left( \nu \sigma \cdot m_{g} \cdot \frac{\tau}{\tau^{1/\kappa}-1} + D_{1} \right) \cdot \left( \eta_{0}(\omega) \cdots \eta_{i-1}(\omega) \right)^{\nu} \cdot u(\eta_{i}(\omega)) \\
    &\hspace{180px} + \left( \exp \left( \nu \sigma \cdot m_{g} \cdot \frac{\tau}{\tau^{1/\kappa}-1} \right) \cdot D_{2} + u(\varphi) \right) \cdot \rho^{\nu (i-1)/2}.
\end{align*}
Finally, noting that
\begin{equation*}
    \sum_{i=s+1}^{\infty} \rho^{\nu (i-1)/2} = \dfrac{\rho^{\nu s/2}}{1-\rho^{\nu/2}},
\end{equation*}
for such a $k$ we have
\begin{align*}
    \Phi(\omega_{k}) &= \text{"head"}(\omega_{k}) + \text{"tail"}(\omega_{k}) \\
    &< \left( \text{"head"}(\omega) + \dfrac{\varepsilon}{4} \right) + \left(  \exp \left( \nu \sigma \cdot m_{g} \cdot \frac{\tau}{\tau^{1/\kappa}-1} + D_{1} \right) \cdot \text{"tail"}(\omega) \right.\ \\
    & \hspace{105px}+ \left.\ \left( \exp \left( \nu \sigma \cdot m_{g} \cdot \frac{\tau}{\tau^{1/\kappa}-1} \right) \cdot D_{2} + u(\varphi) \right) \cdot \dfrac{\rho^{\nu s/2}}{1-\rho^{\nu/2}} \right) \\
    &< \text{"head"}(\omega) + \dfrac{\varepsilon}{4} + \exp \left( \nu \sigma \cdot m_{g} \cdot \frac{\tau}{\tau^{1/\kappa}-1} + D_{1} \right) \cdot \dfrac{\varepsilon}{2 \left( \exp \left( \nu \sigma \cdot m_{g} \cdot \frac{\tau}{\tau^{1/\kappa}-1} + D_{1} \right) \right)} \\
    & \hspace{20px}+ \left( \exp \left( \nu \sigma \cdot m_{g} \cdot \frac{\tau}{\tau^{1/\kappa}-1} \right) \cdot D_{2} + u(\varphi) \right) \cdot \dfrac{\varepsilon}{4 \left( \exp \left( \nu \sigma \cdot m_{g} \cdot \frac{\tau}{\tau^{1/\kappa}-1} \right) \cdot D_{2} + u(\varphi) \right)} \\
    &= \text{"head"}(\omega) + \dfrac{\varepsilon}{4} + \dfrac{\varepsilon}{2} + \dfrac{\varepsilon}{4} < \Phi(\omega) + \varepsilon.
\end{align*}
\end{proof}

\subsection{Proof of Lemma~\ref{lem:5.18}}

Write
\begin{equation*}
    \Phi^{-}(\omega) = \Phi(\omega) - s(n+m) \cdot \left( \eta_{1}(\omega) \eta_{2}(\omega) \cdots \eta_{n+m-1}(\omega) \right)^{\nu} \cdot u(\eta_{n+m}(\omega)).
\end{equation*}
The value of the integer $m>0$ is yet to be determined.

\begin{lem}\label{lem:Phi_minus_N_1_diff}
For any $\omega$ of the form as in Lemma~\ref{lem:5.18} and for any $\varepsilon>0$, there is an $m_{0}>0$ such that, for any $N$ and any $m \geq m_{0}$,
\begin{equation*}
    | \Phi^{-}(\beta^{N}) - \Phi^{-}(\beta^{1}) | < \frac{\varepsilon}{4}.
\end{equation*}
\end{lem}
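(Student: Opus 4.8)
Since setting $N=1$ turns $\beta^{N}$ into $[a_{1},\dots,a_{n},1,1,\dots]=\omega$, the assertion is exactly that $\Phi^{-}(\beta^{N})\to\Phi^{-}(\omega)$ as $m\to\infty$, uniformly in $N$. Abbreviate $T_{i}^{\star}:=\bigl(\eta_{1}(\star)\cdots\eta_{i-1}(\star)\bigr)^{\nu}u\bigl(\eta_{i}(\star)\bigr)>0$, so that
\begin{equation*}
\bigl|\Phi^{-}(\beta^{N})-\Phi^{-}(\omega)\bigr|\ \le\ \sum_{i\neq n+m}\bigl|T_{i}^{\beta^{N}}-T_{i}^{\omega}\bigr| .
\end{equation*}
The plan is to split this sum into the \emph{tail} $i>n+m$, the \emph{far head} $n+m_{1}\le i<n+m$ for a cut-off $m_{1}$ to be fixed, and the \emph{near head} $1\le i<n+m_{1}$, and to make each of the three contributions $<\varepsilon/12$.

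In the tail and far head I would use only the trivial bound $\bigl|T_{i}^{\beta^{N}}-T_{i}^{\omega}\bigr|\le T_{i}^{\beta^{N}}+T_{i}^{\omega}$ together with Lemma~\ref{lem:consecutive_eta_bound}, which gives $\bigl(\eta_{1}(\star)\cdots\eta_{i-1}(\star)\bigr)^{\nu}\le\rho^{\nu\lfloor(i-1)/2\rfloor}$ uniformly. For $i>n+m$ the symbols of $\beta^{N}$ and of $\omega$ past position $n+m$ are all $1$, so $\eta_{i}(\beta^{N})=\eta_{i}(\omega)=\varphi$ and $u(\eta_{i}(\cdot))=u(\varphi)$; hence $\sum_{i>n+m}\bigl(T_{i}^{\beta^{N}}+T_{i}^{\omega}\bigr)\le\frac{4u(\varphi)}{1-\rho^{\nu}}\rho^{\nu\lfloor(n+m)/2\rfloor}\to0$ as $m\to\infty$. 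For $n<i<n+m$ the first symbol of $\eta_{i}(\beta^{N})$ is $1$, so $\eta_{i}(\beta^{N})\in J_{1}$, which is bounded away from $s_{0}$, and thus $u(\eta_{i}(\beta^{N}))\le U_{1}:=\sup_{J_{1}}u<\infty$ by Lemma~\ref{lem:u_bounded}; the same holds for $\omega$ since $u(\eta_{i}(\omega))=u(\varphi)\le U_{1}$. Therefore the far-head sum is at most $\frac{4U_{1}}{1-\rho^{\nu}}\rho^{\nu\lfloor(n+m_{1}-1)/2\rfloor}$, and I fix $m_{1}$, once and for all, large enough that this is $<\varepsilon/12$; this choice of $m_{1}$ does not depend on $N$ or $m$.

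The crux is the near head $1\le i<n+m_{1}$, where genuine cancellation between $T_{i}^{\beta^{N}}$ and $T_{i}^{\omega}$ is needed. Because $\beta^{N}$ and $\omega$ agree in their first $n+m-1$ symbols, Lemma~\ref{lem:log_ratio_estimates}, part~\ref{lem:u_diff_bounded:2}, applied with $\gamma_{1}=\beta^{N}$, $\gamma_{2}=\omega$, yields $\sum_{j=1}^{i-1}\bigl|\log\frac{\eta_{j}(\beta^{N})}{\eta_{j}(\omega)}\bigr|<\frac{m_{g}\sigma\tau}{1-\tau^{-1/\kappa}}\,\tau^{(i-(n+m))/\kappa}\le\frac{m_{g}\sigma\tau}{1-\tau^{-1/\kappa}}\,\tau^{(m_{1}-m)/\kappa}$, and Lemma~\ref{lem:log_ratio_estimates}, part~\ref{lem:u_diff_bounded:3}, applied as that statement permits with its $\beta^{I}=\beta^{1}=\omega$ and its ``$\omega$'' equal to $\beta^{N}$ (legitimate for these $i$ once $m_{1}$ is further enlarged so that the part holds for all $m\ge m_{1}$), yields $\bigl|\log\frac{u(\eta_{i}(\omega))}{u(\eta_{i}(\beta^{N}))}\bigr|<f(m)$ with $f(m)\to0$. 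Combining, $\bigl|\log(T_{i}^{\beta^{N}}/T_{i}^{\omega})\bigr|<\rho(m):=\frac{\nu m_{g}\sigma\tau}{1-\tau^{-1/\kappa}}\tau^{(m_{1}-m)/\kappa}+f(m)$, which tends to $0$ as $m\to\infty$, uniformly for $i<n+m_{1}$ and in $N$. Hence $\bigl|T_{i}^{\beta^{N}}-T_{i}^{\omega}\bigr|\le T_{i}^{\omega}\bigl(e^{\rho(m)}-1\bigr)\le 2\rho(m)\,T_{i}^{\omega}$ for all large $m$, and
\begin{equation*}
\sum_{i=1}^{n+m_{1}-1}\bigl|T_{i}^{\beta^{N}}-T_{i}^{\omega}\bigr|\ \le\ 2\rho(m)\sum_{i\ge1}T_{i}^{\omega}\ =\ 2\rho(m)\,S_{1},
\end{equation*}
where $S_{1}:=\sum_{i\ge1}T_{i}^{\omega}<\infty$ because the symbolic expansion of $\omega$ ends in all ones, so the products $\eta_{1}(\omega)\cdots\eta_{i-1}(\omega)$ decay geometrically in $i$ by Lemma~\ref{lem:consecutive_eta_bound} while $u(\eta_{i}(\omega))$ stays bounded. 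Finally I would choose $m_{0}>m_{1}$ so that for $m\ge m_{0}$ both $2\rho(m)S_{1}<\varepsilon/12$ and the tail bound is $<\varepsilon/12$; adding the three contributions gives $\bigl|\Phi^{-}(\beta^{N})-\Phi^{-}(\omega)\bigr|<3\cdot(\varepsilon/12)=\varepsilon/4$, as required.

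The step I expect to be the main obstacle is precisely the near head, or more exactly the realization that the window of indices on which Lemma~\ref{lem:log_ratio_estimates} furnishes an $o(1)$ log-ratio is of the bounded shape $i<n+m_{1}$ with $m_{1}$ fixed in advance: for indices $i$ close to $n+m$ the multiplicative estimates of that lemma are merely $O(1)$, not small, which is why such indices must instead be relegated to the far head and dispatched termwise through the geometric contraction of Lemma~\ref{lem:consecutive_eta_bound}. This two-scale splitting is the same device already employed in the proof of Lemma~\ref{lem:5.19}, so the bookkeeping here runs parallel to that argument.
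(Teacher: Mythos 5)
Your proof is correct, and the overall strategy mirrors the paper's: cut off the sum at a threshold $m_{1}$ determined before $m$, control the early indices via the log-ratio estimates of Lemma~\ref{lem:log_ratio_estimates} (parts \ref{lem:u_diff_bounded:2} and \ref{lem:u_diff_bounded:3}), and make the remaining indices small. The difference is in how you treat the indices $i\ge n+m_{1}$. The paper keeps these as a single block and shows that each term of this block changes by at most a fixed \emph{multiplicative} factor
\[
D=\exp\!\left(\nu\left(\tfrac{m_{g}\sigma\tau^{3}}{\tau^{1/\kappa}-1}+\left|\log\tfrac{C}{u(\ell_{1})}\right|\right)+M\right),
\]
handling the index $i=n+m-1$ separately via Lemma~\ref{lem:u_bounded} (it is the one place the $u$-argument can escape towards $s_{0}$), and then chooses $m_{1}$ so that the original tail of $\Phi(\beta^{1})$ is smaller than $\varepsilon/(4(1+D))$. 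You instead split this block into a ``far head'' $n+m_{1}\le i<n+m$ and a ``true tail'' $i>n+m$, bound both termwise by the trivial $T_{i}^{\beta^{N}}+T_{i}^{\omega}$ together with the geometric contraction $\rho^{\nu\lfloor (i-1)/2\rfloor}$ of Lemma~\ref{lem:consecutive_eta_bound} and the boundedness of $u$ on $J_{1}$, and therefore never need to introduce the constant $D$ at all; your cutoff $m_{1}$ is chosen purely from the geometric decay rate. This is slightly more wasteful term by term but structurally simpler: the paper's finer multiplicative bookkeeping buys an $\varepsilon/4$ margin with only two pieces, yours buys it with three pieces of $\varepsilon/12$ each but with a more elementary tail estimate. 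Both arguments require the same inputs (Lemma~\ref{lem:log_ratio_estimates}, Lemma~\ref{lem:u_bounded}, Lemma~\ref{lem:consecutive_eta_bound}), they are both uniform in $N$, and they both respect the sign term $s(i)$ since it cancels in every ratio $T_{i}^{\beta^{N}}/T_{i}^{\omega}$; so this is a valid, marginally cleaner variant of the same proof.
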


\begin{proof}
Noting that $\eta_{n+m-1}(\beta^{N}) \in (\ell_{1}, s_{1})$ for all $N \in \mathbb{Z}^{+}$, by Lemma~\ref{lem:u_bounded} we have some $C>0$ such that $0 \leq u(\eta_{n+m-1}(\beta^{N})) < C$ for all $N$. Now, the sum in the expression for $\Phi(\beta^{1})$ converges because its tail converges absolutely:
\begin{equation*}
    \sum_{i=n+1}^{\infty} \left| s(i) \cdot \left( \eta_{0}(\beta^{1}) \cdots \eta_{i-1}(\beta^{1}) \right)^{\nu} \cdot u(\eta_{i}(\beta^{1})) \right| = u(\beta^{1}) \cdot \sum_{i=n+1}^{\infty} \left( (\beta^{1})^{\nu} \right)^{i} < \infty
\end{equation*}
since $(\beta^{1})^{\nu} < s_{1} \leq 1$ for $\nu > 0$. Hence there is an $m_{1}>1$, make it large enough that Lemma~\ref{lem:log_ratio_estimates} \ref{lem:u_diff_bounded:3} applies for all $m \geq m_{1}$, such that the tail of the sum
\begin{equation*}
    \sum_{i\geq n+m_{1}} s(i) \cdot \left( \eta_{0}(\beta^{1}) \cdots \eta_{i-1}(\beta^{1}) \right)^{\nu} \cdot u(\eta_{i}(\beta^{1})) < \frac{\varepsilon}{4(1 + D)},
\end{equation*}
where $M>0$ is as in Lemma~\ref{lem:log_ratio_estimates} \ref{lem:u_diff_bounded:3}, and $D$ is defined as
\begin{equation*}
    D = \exp \left( \nu \left( \frac{m_{g} \sigma \tau^{3}}{\tau^{1/\kappa} -1} + \left| \log \left( \frac{C}{u(\ell_{1})} \right) \right| \right) + M \right).
\end{equation*}
We will now show how to choose $m_{0}>m_{1}$ to satisfy the conclusion of the Lemma.

We bound the influence of the change from $\beta^{1}$ to $\beta^{N}$ using Lemma~\ref{lem:log_ratio_estimates} \ref{lem:u_diff_bounded:2} and \ref{lem:u_diff_bounded:3}. The influence on each of the "head elements" ($i<n+m_{1}$) is bounded by
\begin{align*}
    &\left| \log \frac{ s(i) \cdot \left( \eta_{0}(\beta^{1}) \cdots \eta_{i-1}(\beta^{1}) \right)^{\nu} \cdot u(\eta_{i}(\beta^{1})) }{ s(i) \cdot \left( \eta_{0}(\beta^{N}) \cdots \eta_{i-1}(\beta^{N}) \right)^{\nu} \cdot u(\eta_{i}(\beta^{N})) } \right| = \left| \nu \cdot \log \frac{ \eta_{0}(\beta^{1}) \cdots \eta_{i-1}(\beta^{1}) }{ \eta_{0}(\beta^{N}) \cdots \eta_{i-1}(\beta^{N}) } + \log \frac{ u(\eta_{i}(\beta^{1})) }{ u(\eta_{i}(\beta^{N})) } \right| \\
&< \nu \cdot \left( \sum_{j=1}^{i-1} m_{g} \cdot \sigma \cdot \tau^{1 + \frac{j+1-(n+m)}{\kappa}} \right) + f(m) \leq \nu \sigma \cdot m_{g} \cdot \dfrac{\tau^{m_{1} + \frac{2 - m}{\kappa}}}{\tau^{1/\kappa}-1} + f(m).
\end{align*}

By making $m$ sufficiently large (i.e., by choosing a sufficiently large $m_{0}$) we can ensure that
\begin{equation*}
    1 - \frac{\varepsilon}{4(1 + D) \cdot \Phi(\beta^{1})} < \frac{ s(i) \cdot \left( \eta_{0}(\beta^{N}) \cdots \eta_{i-1}(\beta^{N}) \right)^{\nu} \cdot u(\eta_{i}(\beta^{N})) }{ s(i) \cdot \left( \eta_{0}(\beta^{1}) \cdots \eta_{i-1}(\beta^{1}) \right)^{\nu} \cdot u(\eta_{i}(\beta^{1})) } < 1 + \frac{\varepsilon}{4(1 + D) \cdot \Phi(\beta^{1})}.
\end{equation*}
Hence
\begin{align*}
    &\left| s(i) \cdot \left( \eta_{0}(\beta^{N}) \cdots \eta_{i-1}(\beta^{N}) \right)^{\nu} \cdot u(\eta_{i}(\beta^{N})) - s(i) \cdot \left( \eta_{0}(\beta^{1}) \cdots \eta_{i-1}(\beta^{1}) \right)^{\nu} \cdot u(\eta_{i}(\beta^{1})) \right| \\
    &< \frac{\varepsilon}{4(1 + D) \cdot \Phi(\beta^{1})} \cdot \left( \eta_{0}(\beta^{1}) \cdots \eta_{i-1}(\beta^{1}) \right)^{\nu} \cdot u(\eta_{i}(\beta^{1})).
\end{align*}
Adding the inequalities for $i=1, 2, \ldots, n+m_{1}-1$, we obtain
\begin{align*}
    & \bigg\rvert \sum_{i=1}^{n+m_{1}-1} s(i) \cdot \left( \eta_{0}(\beta^{N}) \cdots \eta_{i-1}(\beta^{N}) \right)^{\nu} \cdot u(\eta_{i}(\beta^{N})) - \sum_{i=1}^{n+m_{1}-1} s(i) \cdot \left( \eta_{0}(\beta^{1}) \cdots \eta_{i-1}(\beta^{1}) \right)^{\nu} \cdot u(\eta_{i}(\beta^{1})) \bigg\rvert \\
    &< \frac{\varepsilon}{4(1 + D) \cdot \Phi(\beta^{1})} \sum_{i=1}^{n+m_{1}-1} \left( \eta_{0}(\beta^{1}) \cdots \eta_{i-1}(\beta^{1}) \right)^{\nu} \cdot u(\eta_{i}(\beta^{1})) = \frac{\varepsilon}{4(1 + D)}
\end{align*}
Thus the influence on the "head" of $\Phi^{-}$ is bounded by $\dfrac{\varepsilon}{4(1 + D)}$. To bound the influence on the "tail", we consider three kinds of terms $s(i) \cdot \left( \eta_{0}(\beta^{N}) \cdots \eta_{i-1}(\beta^{N}) \right)^{\nu} \cdot u(\eta_{i}(\beta^{N}))$: those for which $n+m_{1} \leq i \leq n+m-2$, $i = m+n-1$, and $i \geq m+n+1$ (recall that $i=n+m$ is not in $\Phi^{-}$).

\begin{itemize}
    \item For $n+m_{1} \leq i \leq n+m-2$. By Lemma~\ref{lem:log_ratio_estimates} \ref{lem:u_diff_bounded:2} and \ref{lem:u_diff_bounded:3}, and by the work above,
\begin{align*}
    &\left| \log \frac{ s(i) \cdot \left( \eta_{0}(\beta^{1}) \cdots \eta_{i-1}(\beta^{1}) \right)^{\nu} \cdot u(\eta_{i}(\beta^{1})) }{ s(i) \cdot \left( \eta_{0}(\beta^{N}) \cdots \eta_{i-1}(\beta^{N}) \right)^{\nu} \cdot u(\eta_{i}(\beta^{N})) } \right| < \nu \cdot \left( \sum_{j=1}^{i-1} m_{g} \cdot \sigma \cdot \tau^{1 + \frac{j+1-(n+m)}{\kappa}} \right) + f(m) \\
&\leq \nu \sigma \cdot m_{g} \cdot \tau^{1+\frac{2-(n+m)}{\kappa}} \left( \dfrac{\tau^{i}}{\tau^{1/\kappa}-1} \right) + f(m) \leq \nu \sigma \cdot m_{g} \cdot \dfrac{\tau}{\tau^{1/\kappa}-1} + M
\end{align*}
Where $M>0$ is as in Lemma~\ref{lem:log_ratio_estimates} \ref{lem:u_diff_bounded:3}. Hence in this case each term can increase or decrease by a factor of at most $\exp \left( \frac{\nu \sigma m_{g} \tau}{\tau^{1/ \kappa} - 1} + M \right)$.

    \item For $i=n+m-1$. Recall that for all $N \in \mathbb{Z}^{+}$, $0 \leq u(\eta_{n+m-1}(\beta^{N})) < C$. Thus we have
\begin{align*}
    &\log \frac{ s(i) \cdot \left( \eta_{0}(\beta^{N}) \cdots \eta_{i-1}(\beta^{N}) \right)^{\nu} \cdot u(\eta_{i}(\beta^{N})) }{ s(i) \cdot \left( \eta_{0}(\beta^{1}) \cdots \eta_{i-1}(\beta^{1}) \right)^{\nu} \cdot u(\eta_{i}(\beta^{1})) } \leq \log \frac{ \left( \eta_{0}(\beta^{N}) \cdots \eta_{i-1}(\beta^{N}) \right)^{\nu} \cdot C }{ \left( \eta_{0}(\beta^{1}) \cdots \eta_{i-1}(\beta^{1}) \right)^{\nu} \cdot u(\ell_{1}) } \\
&< \nu \cdot \left( \sum_{j=1}^{n+m-2} m_{g} \cdot \sigma \cdot \tau^{1 + \frac{j+1-(n+m)}{\kappa}} + \log \left( \dfrac{C}{u(\ell_{1})} \right) \right) < \nu \cdot \left( \dfrac{m_{g} \cdot \sigma \cdot \tau^{2}}{\tau^{1/ \kappa} -1} + \log \left( \dfrac{C}{u(\ell_{1})} \right) \right)
\end{align*}
Hence this term could increase or decrease by a factor of at most $\exp \left( \nu \cdot \left( \frac{m_{g} \sigma \tau^{2}}{\tau^{1/ \kappa} -1} + \log \left( \frac{C}{u(\ell_{1})} \right) \right) \right)$.

    \item For $i \geq n+m+1$. Note that the $\eta_{j}$ for $j>n+m$ are not affected by the change, and the change decreases $\eta_{n+m}$, so that $\eta_{n+m}(\beta^{N}) \leq \eta_{n+m}(\beta^{1})$ and thus $\left( \eta_{n+m}(\beta^{N}) \right)^{\nu} \leq \left( \eta_{n+m}(\beta^{1}) \right)^{\nu}$. Hence
\begin{align*}
    &\log \frac{ s(i) \cdot \left( \eta_{0}(\beta^{N}) \cdots \eta_{i-1}(\beta^{N}) \right)^{\nu} u(\eta_{i}(\beta^{N})) }{ s(i) \cdot \left( \eta_{0}(\beta^{1}) \cdots \eta_{i-1}(\beta^{1}) \right)^{\nu} u(\eta_{i}(\beta^{1})) }
= \log \frac{ \left( \eta_{0}(\beta^{N}) \cdots \eta_{n+m}(\beta^{N}) \right)^{\nu} }{ \left( \eta_{0}(\beta^{1}) \cdots \eta_{n+m}(\beta^{1}) \right)^{\nu} } \\
    &\leq \log \frac{ \left( \eta_{0}(\beta^{N}) \cdots \eta_{n+m-1}(\beta^{N}) \right)^{\nu} }{ \left( \eta_{0}(\beta^{1}) \cdots \eta_{n+m-1}(\beta^{1}) \right)^{\nu} } < \nu \cdot \sum_{j=1}^{n+m-1} m_{g} \cdot \sigma \cdot \tau^{1 + \frac{j+1-(n+m)}{\kappa}} < \dfrac{\nu \cdot m_{g} \cdot \sigma \cdot \tau^{3}}{\tau^{1/ \kappa} - 1}.
\end{align*}
So, in this case each term could increase or decrease by a factor of at most $\exp \left( \frac{\nu m_{g} \sigma \tau^{3}}{\tau^{1/ \kappa}-1} \right)$.
\end{itemize}

We see that after the change, each term of the tail could increase or decrease by a factor of
\begin{equation*}
    \exp \left( \nu \left( \frac{m_{g} \sigma \tau^{3}}{\tau^{1/\kappa} -1} + \left| \log \left( \frac{C}{u(\ell_{1})} \right) \right| \right) + M \right) = D
\end{equation*}
at most. So the value of the tail remains in the interval $\left[-\dfrac{D \cdot \varepsilon}{4(1 + D)}, \dfrac{D \cdot \varepsilon}{4(1 + D)} \right]$, hence the change in the tail is bounded by $\dfrac{D \cdot \varepsilon}{4(1 + D)}$.

Therefore, the total change in $\Phi^{-}$ is bounded by
\begin{equation*}
    \text{change in the "head" } + \text{ change in the "tail"} < \frac{\varepsilon}{4(1 + D)} + \frac{D \cdot \varepsilon}{4(1 + D)} = \frac{\varepsilon}{4}.
\end{equation*}
\end{proof}

\begin{lem}\label{lem:Phi_minus_N_N+1_diff}
For any $\varepsilon$ and for the same $m_{0}(\varepsilon)$ as in Lemma~\ref{lem:Phi_minus_N_1_diff}, for any $m \geq m_{0}$ and $N$,
\begin{equation*}
    |\Phi^{-}(\beta^{N}) - \Phi(\beta^{N+1})| < \frac{\varepsilon}{2}.
\end{equation*}
\end{lem}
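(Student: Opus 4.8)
The plan is to compare the two series summand by summand. By definition the series for $\Phi^{-}(\beta^{N})$ is the one for $\Phi(\beta^{N})$ with its $(n+m)$-th term deleted, so, writing $R_{i}(x)=s(i)\bigl(\eta_{0}(x)\cdots\eta_{i-1}(x)\bigr)^{\nu}u(\eta_{i}(x))$ for the $i$-th summand, we have
\[
\Phi(\beta^{N+1})-\Phi^{-}(\beta^{N})=R_{n+m}(\beta^{N+1})+\sum_{i\neq n+m}\bigl(R_{i}(\beta^{N+1})-R_{i}(\beta^{N})\bigr).
\]
Since $\beta^{N}$ and $\beta^{N+1}$ have identical symbolic expansions except for the single digit at position $n+m$, the sum on the right is exactly the kind of quantity estimated in the proof of Lemma~\ref{lem:Phi_minus_N_1_diff}, and the whole difficulty is concentrated in the one remaining term $R_{n+m}(\beta^{N+1})$.

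First I would dispose of $\sum_{i\neq n+m}\bigl|R_{i}(\beta^{N+1})-R_{i}(\beta^{N})\bigr|$. This repeats, almost verbatim, the bookkeeping in the proof of Lemma~\ref{lem:Phi_minus_N_1_diff}: pick $m_{1}$ so large that the absolutely convergent tail $\sum_{i\geq n+m_{1}}\bigl|R_{i}(\beta^{1})\bigr|$ is a small fraction of $\varepsilon$ and Lemma~\ref{lem:log_ratio_estimates} applies for all $m\geq m_{1}$; for the head indices $i<n+m_{1}$, parts \ref{lem:u_diff_bounded:1}, \ref{lem:u_diff_bounded:2} and \ref{lem:u_diff_bounded:3} of Lemma~\ref{lem:log_ratio_estimates} show that each such summand changes by a multiplicative factor tending to $1$ as $m\to\infty$, uniformly in $N$ (the relevant estimates involve only $g$, which is at most $m_{g}$, together with the contracting powers $\tau^{\,(i-(n+m))/\kappa}$ coming from \ref{G:expansivity}); for the tail indices $i>n+m_{1}$ with $i\neq n+m$, Lemma~\ref{lem:consecutive_eta_bound} bounds each summand on either number by $C\rho^{\nu(i-1)/2}u(\varphi)$, so the tail contributes $O\bigl(\rho^{\nu m_{1}/2}\bigr)$. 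Taking the same $m_{0}$ as in Lemma~\ref{lem:Phi_minus_N_1_diff} makes this sum $<\varepsilon/4$ for every $m\geq m_{0}$ and every $N$.

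It then remains to bound the surviving summand $\bigl|R_{n+m}(\beta^{N+1})\bigr|=\bigl(\eta_{1}(\beta^{N+1})\cdots\eta_{n+m-1}(\beta^{N+1})\bigr)^{\nu}\,u\bigl([N+1,1,1,\ldots]\bigr)$ by $\varepsilon/4$. Its weight is at most $C\rho^{\nu(m-1)/2}$ by Lemma~\ref{lem:consecutive_eta_bound}, so it can be driven below any threshold by enlarging $m$, while $u\bigl([N+1,1,1,\ldots]\bigr)$ is bounded from above — integrating the one-sided estimate \ref{u:3'} on $u'$ exactly as in Lemma~\ref{lem:u_bounded} — in terms of the distance of $[N+1,1,1,\ldots]\in J_{N+1}$ to $s_{0}$. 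The main obstacle, and the place where the more delicate hypotheses \ref{G:l_i and r_i condition}, \ref{G:function g} on $G$ and \ref{u:1}, \ref{u:3'} on $u$ must be used, is to make this last bound uniform in $N$: the weight decays geometrically in $m$ but the value $u(\eta_{n+m}(\beta^{N+1}))$ grows as $N\to\infty$, and one has to play the two against each other by tying the size of the weight to the position and length of the interval $J_{N+1}$ near $s_{0}$ (this is precisely what \ref{G:l_i and r_i condition} and \ref{G:function g} control). Once $\bigl|R_{n+m}(\beta^{N+1})\bigr|<\varepsilon/4$ is established, adding the two estimates gives $\bigl|\Phi^{-}(\beta^{N})-\Phi(\beta^{N+1})\bigr|<\varepsilon/2$; this bound, together with Lemma~\ref{lem:Phi_minus_N_1_diff}, is what the proof of Lemma~\ref{lem:5.18} will use.
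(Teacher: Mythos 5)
The statement as printed has a typo: $\Phi(\beta^{N+1})$ should read $\Phi^{-}(\beta^{N+1})$. This is clear from how the lemma is invoked in the proof of Lemma~\ref{Phi_diff} (which needs exactly $\Phi^{-}(\beta^{N+1})-\Phi^{-}(\beta^{N})<\varepsilon/2$), and from the paper's own one-line proof, which is a triangle inequality using Lemma~\ref{lem:Phi_minus_N_1_diff} at $N$ and $N+1$ and only makes sense when every $\Phi$ is a $\Phi^{-}$: $|\Phi^{-}(\beta^{N})-\Phi^{-}(\beta^{N+1})|\leq|\Phi^{-}(\beta^{N})-\Phi^{-}(\beta^{1})|+|\Phi^{-}(\beta^{1})-\Phi^{-}(\beta^{N+1})|<\varepsilon/4+\varepsilon/4$.

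You took the printed statement at face value, and the resulting plan cannot succeed, because the literal statement is false. In your decomposition the ``surviving summand'' is $R_{n+m}(\beta^{N+1})=s(n+m)\,(\eta_{1}(\beta^{N+1})\cdots\eta_{n+m-1}(\beta^{N+1}))^{\nu}\,u([N+1,1,1,\ldots])$, and for any fixed $m$ this tends to infinity as $N\to\infty$: the factor $u([N+1,1,1,\ldots])$ is unbounded by \ref{u:1} since $[N+1,1,1,\ldots]\in J_{N+1}\to s_{0}$, while the weight $(\eta_{1}\cdots\eta_{n+m-1})^{\nu}$ stays bounded away from zero uniformly in $N$ --- each $\eta_{i}(\beta^{N+1})$ with $i<n+m$ lies in a fixed branch $J_{a_{i}}$ or $J_{1}$, hence is at least $\ell_{a_{i}}$ or $\ell_{1}$, independently of $N$. (This blow-up is precisely the content of Lemma~\ref{lem:Phi_limit_unbounded} later in the appendix.) You correctly flag this term as the crux, but the proposed remedy --- ``tying the size of the weight to the position and length of $J_{N+1}$'' via \ref{G:l_i and r_i condition} and \ref{G:function g} --- cannot work: the weight shrinks with $m$, not with $N$, so no enlargement of $m$ yields a bound uniform in $N$. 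The entire reason $\Phi^{1}$ is split off and handled in Lemma~\ref{lem:Phi_1_diff} is that the two divergent terms $R_{n+m}(\beta^{N})$ and $R_{n+m}(\beta^{N+1})$ must be paired so their \emph{difference} can be controlled; your pairing of $\Phi^{-}(\beta^{N})$ against the full $\Phi(\beta^{N+1})$ leaves one of them orphaned. Finally, even the part of your argument that is salvageable --- re-estimating $\sum_{i\neq n+m}(R_{i}(\beta^{N+1})-R_{i}(\beta^{N}))$ summand by summand --- duplicates work that Lemma~\ref{lem:Phi_minus_N_1_diff} has already done; the intended proof is just the trivial triangle inequality above.
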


\begin{proof}
We have
\begin{equation*}
    |\Phi^{-}(\beta^{N}) - \Phi(\beta^{N+1})| \leq |\Phi^{-}(\beta^{N}) - \Phi(\beta^{1})| + |\Phi^{-}(\beta^{N+1}) - \Phi(\beta^{1})| < \frac{\varepsilon}{4} + \frac{\varepsilon}{4} = \frac{\varepsilon}{2}.
\end{equation*}
\end{proof}

We will now take a closer look at the term
\begin{equation*}
    \Phi^{1}(\omega) := s(n+m) \cdot \left( \eta_{1}(\omega) \cdots \eta_{n+m-1}(\omega) \right)^{\nu} \cdot u(\eta_{n+m}(\omega)) = \Phi(\omega) - \Phi^{-}(\omega).
\end{equation*}

\begin{lem}\label{lem:Phi_1_diff}
There exists some $m$, which can be made arbitrarily large, such that for any $N$,
\begin{equation*}
    \Phi^{1}(\beta^{N+1}) - \Phi^{1}(\beta^{N}) < \frac{\varepsilon}{2}.
\end{equation*}
\end{lem}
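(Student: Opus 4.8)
The plan is to treat $\Phi^{1}(\beta^{N})$ as a function of the single integer $N$ and to show it varies slowly. Since $s(i)=1$ infinitely often and the lemma only asks for \emph{some} $m$ (arbitrarily large), I would first fix $m\geq 2$ with $s(n+m)=1$, so that $\Phi^{1}(\beta^{N})=P_{N}^{\nu}\,U_{N}$ where $P_{N}:=\prod_{i=1}^{n+m-1}\eta_{i}(\beta^{N})$ and $U_{N}:=u(\eta_{n+m}(\beta^{N}))=u([N,1,1,\dots])$. Pairing consecutive factors and applying Lemma~\ref{lem:consecutive_eta_bound} gives the bound $P_{N}\leq\rho^{(n+m-2)/2}$, with $\rho<1$ independent of $N$; hence $P_{N}^{\nu}\leq\rho^{\nu(n+m-2)/2}$, a quantity that decays geometrically as $m$ grows. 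This geometric factor is the lever that must beat the remaining error terms.

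Next I would write the increment as $\Phi^{1}(\beta^{N+1})-\Phi^{1}(\beta^{N})=P_{N+1}^{\nu}(U_{N+1}-U_{N})+(P_{N+1}^{\nu}-P_{N}^{\nu})\,U_{N}$ and bound the two pieces. For the second piece, summing the estimate of Lemma~\ref{lem:log_ratio_estimates} \ref{lem:u_diff_bounded:1} over $i=1,\dots,n+m-1$ yields $|\log(P_{N}/P_{N+1})|< (g(N)/\ell_{1})\cdot\tau/(\tau^{1/\kappa}-1)$, so $|(P_{N+1}/P_{N})^{\nu}-1|\leq C_{0}$ for a constant $C_{0}$ (and $\to 0$ as $N\to\infty$ since $g(N)\to0$ by \ref{G:function g}); together with the bound on $P_{N+1}^{\nu}$ this gives $|(P_{N+1}^{\nu}-P_{N}^{\nu})U_{N}|\leq\rho^{\nu(n+m-2)/2}\,C_{0}\,U_{N}$. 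For the first piece, note that $[N,1,1,\dots]\in J_{N}$ and $[N+1,1,1,\dots]\in J_{N+1}$ lie in adjacent intervals at distance exactly $\delta_{G}(N)$, which by \ref{G:l_i and r_i condition} is comparable to $r_{N}-\ell_{N+1}$; combined with the boundedness of $r_{i}/\ell_{i}$ (forced by \ref{G:expansivity}) and the derivative estimate \ref{u:3'} integrated over the short arc $[\ell_{N+1},r_{N}]$ (in the manner of the proof of Lemma~\ref{lem:u_bounded}), this yields $|U_{N+1}-U_{N}|\leq C_{1}$ and $U_{N}\leq C_{2}/(\ell_{N}-s_{0})$, while $g(N)\,U_{N}$ stays bounded because of the decay rate of $g$. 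Putting these together gives $|\Phi^{1}(\beta^{N+1})-\Phi^{1}(\beta^{N})|\leq K\,\rho^{\nu(n+m-2)/2}$ for a constant $K$ independent of $N$ and $m$, and it then suffices to enlarge $m$ until $K\,\rho^{\nu(n+m-2)/2}<\varepsilon/2$; enlarging $m$ further is harmless.

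The main obstacle is the factor $U_{N}=u(\eta_{n+m}(\beta^{N}))$: it probes $u$ arbitrarily close to its singularity $s_{0}$, where only the one-sided estimate \ref{u:3'} is available and $U_{N}$ is itself unbounded in $N$. The crux is therefore not to bound $\Phi^{1}(\beta^{N})$ directly but to balance the geometric decay $P_{N}^{\nu}=O(\rho^{\nu m/2})$ coming from Lemma~\ref{lem:consecutive_eta_bound} against the controlled, at-most-polynomial growth of $U_{N}$ and $|U_{N+1}-U_{N}|$ forced by \ref{u:3'} and by the comparability of $\delta_{G}(N)$ with $r_{N}-\ell_{N+1}$ from \ref{G:l_i and r_i condition}, so that their product is uniformly bounded in $N$ and can then be made $<\varepsilon/2$ by a single large choice of $m$.
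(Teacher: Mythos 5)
Your proof follows the same strategy as the paper's: extract the geometric factor $\rho^{\nu(n+m-2)/2}$ via Lemma~\ref{lem:consecutive_eta_bound} and then show the remainder is bounded uniformly in $N$. Your additive decomposition $\Phi^{1}(\beta^{N+1})-\Phi^{1}(\beta^{N})=P_{N+1}^{\nu}(U_{N+1}-U_{N})+(P_{N+1}^{\nu}-P_{N}^{\nu})U_{N}$ is algebraically equivalent to the paper's multiplicative one, which, after the elementary inequality $e^{cy}<1+dcy$, produces the same two pieces $U_{N+1}-U_{N}$ and $g(N)\,u(\eta_{n+m}(\beta^{N+1}))$.

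The gap is in your final paragraph, where you assert $|U_{N+1}-U_{N}|\leq C_{1}$ and that ``$g(N)U_{N}$ stays bounded because of the decay rate of $g$.'' A minor point first: with only the one-sided estimate~\ref{u:3'} available in this section, $U_{N+1}-U_{N}$ can only be bounded from \emph{above}, not in absolute value; that is all the lemma needs, but the absolute value is an overclaim. The substantive point: integrating~\ref{u:3'} from $x(N):=\eta_{n+m}(\beta^{N+1})$ to $y(N):=\eta_{n+m}(\beta^{N})$ yields
\begin{equation*}
U_{N+1}-U_{N} < \frac{C\,\delta_{G}(N)}{(x(N)-s_{0})(y(N)-s_{0})},
\end{equation*}
and $|(P_{N+1}/P_{N})^{\nu}-1|\lesssim g(N)$ only reduces your second piece to a multiple of $g(N)U_{N}$. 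None of the facts you invoke --- the comparability $\delta_{G}(N)\asymp r_{N}-\ell_{N+1}$ from~\ref{G:l_i and r_i condition}, the boundedness of $r_{i}/\ell_{i}$ from~\ref{G:expansivity}, or the mere convergence $g(N)\to 0$ from~\ref{G:function g} --- bounds either expression, because both reduce to the quantitative estimate $\delta_{G}(N)\lesssim (x(N)-s_{0})^{2}$, equivalently $g(N)\lesssim x(N)-s_{0}$. Your proof lists the right ingredients but never assembles them into this crucial estimate. The paper carries it out explicitly: it rewrites $[U_{N+1}-U_{N}]+d\nu\,g(N)\tfrac{\tau^{2}}{\tau^{1/\kappa}-1}U_{N+1}$ as $\delta_{G}(N)$ times a pointwise rate, bounds that rate by $C_{4}/(x(N))^{2}$ using Taylor's theorem, \ref{u:3'}, and the key inequality $x(N)g(N)/\delta_{G}(N)<D$ extracted from~\ref{G:l_i and r_i condition}, and finally controls $\delta_{G}(N)/(x(N))^{2}$ via~\ref{G:function g}. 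You would need to reproduce this reduction to close your argument; ``decay rate of $g$'' alone does not do it.
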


\begin{proof}
We assume throughout that $s(n+m) = 1$. By assumption on $s(\cdot)$, there are infinitely many $m$ such that this holds, so we can take $m$ arbitrarily large within the course of the proof.

According to Lemma~\ref{lem:log_ratio_estimates} \ref{lem:u_diff_bounded:1},
\begin{align*}
    &\left| \log \frac{ s(n+m) \cdot \left( \eta_{0}(\beta^{N+1}) \cdots \eta_{n+m-1}(\beta^{N+1})) \right)^{\nu} }{ s(n+m) \cdot \left( \eta_{0}(\beta^{N}) \cdots \eta_{n+m-1}(\beta^{N})) \right)^{\nu} } \right| < \nu \cdot \sum_{i=1}^{n+m-1} \dfrac{g(N)}{\ell_{1}} \cdot \tau^{1 + \frac{i-(n+m)}{\kappa}} \\
    &= \nu \cdot g(N) \cdot \tau^{1 + \frac{1-(n+m)}{\kappa}} \cdot \dfrac{\tau^{n+m}-1}{\tau^{1/\kappa}-1} < \nu \cdot g(N) \cdot \dfrac{\tau^{2}}{\tau^{1/\kappa}-1}.
\end{align*}

Hence 
\begin{equation*}
    s(n+m) \cdot \left( \eta_{0}(\beta^{N+1} \cdots \eta_{n+m-1}(\beta^{N+1})) \right)^{\nu} < s(n+m) \cdot \left( \eta_{0}(\beta^{N} \cdots \eta_{n+m-1}(\beta^{N})) \right)^{\nu} \cdot \exp \left( \nu \cdot g(N) \cdot \dfrac{\tau^{2}}{\tau^{1/\kappa}-1} \right)
\end{equation*}
and
\begin{equation*}
    \Phi^{1}(\beta^{N+1}) < \Phi^{1}(\beta^{N}) \cdot \exp \left( \nu \cdot g(N) \cdot \dfrac{\tau^{2}}{\tau^{1/\kappa}-1} \right) \cdot \dfrac{ u(\eta_{n+m}(\beta^{N+1})) }{ u(\eta_{n+m}(\beta^{N})) }.
\end{equation*}
We require an auxiliary inequality. It is straightforward to show that for any $y \in (0, 1]$ and any $d>e^{c}/c$ we have $e^{c y} < 1 + d c y$. Taking $y = g(N)/(m_{g} \cdot \frac{\tau^{2}}{\tau^{1/\kappa}-1})$ and $c = \nu \cdot m_{g} \cdot \frac{\tau^{2}}{\tau^{1/\kappa}-1}$, for any $d>e^{\nu \cdot m_{g} \cdot \frac{\tau^{2}}{\tau^{1/\kappa}-1}}/(\nu \cdot m_{g} \cdot \frac{\tau^{2}}{\tau^{1/\kappa}-1})$ we have $e^{\nu \cdot g(N) \cdot \frac{\tau^{2}}{\tau^{1/\kappa}-1}} < 1 + d \nu \cdot g(N) \cdot \frac{\tau^{2}}{\tau^{1/\kappa}-1}$. Hence for any such $d$,
\begin{align*}
    \Phi^{1}(\beta^{N+1}) - \Phi^{1}(\beta^{N}) &< \Phi^{1}(\beta^{N}) \left( e^{\nu \cdot g(N) \cdot \frac{\tau^{2}}{\tau^{1/\kappa}-1}} \cdot \dfrac{ u(\eta_{n+m}(\beta^{N+1})) }{ u(\eta_{n+m}(\beta^{N})) } - 1 \right) \\
    &< \Phi^{1}(\beta^{N}) \left( \left( 1 + d \nu \cdot g(N) \cdot \frac{\tau^{2}}{\tau^{1/\kappa}-1} \right) \cdot \dfrac{ u(\eta_{n+m}(\beta^{N+1})) }{ u(\eta_{n+m}(\beta^{N})) } - 1 \right).
\end{align*}

For $\rho \in (0, 1)$ as in Lemma~\ref{lem:consecutive_eta_bound}, we have
\begin{equation*}
    \Phi^{1}(\beta^{N}) < \rho^{\nu \cdot (n+m-2)/2} \cdot u\left( \eta_{n+m}(\beta^{N}) \right).
\end{equation*}
Thus
\begin{align*}
    &\Phi^{1}(\beta^{N+1}) - \Phi^{1}(\beta^{N}) < \rho^{\nu \cdot (n+m-2)/2} \cdot u\left( \eta_{n+m}(\beta^{N}) \right) \cdot \left( \left( 1 + d \nu \cdot g(N) \cdot \frac{\tau^{2}}{\tau^{1/\kappa}-1} \right) \cdot \dfrac{ u(\eta_{n+m}(\beta^{N+1})) }{ u(\eta_{n+m}(\beta^{N})) } - 1 \right) \\[0.2em]
    &= \rho^{\nu \cdot (n+m-2)/2} \cdot \left( \left[ u\left( \eta_{n+m}(\beta^{N+1}) \right) - u\left( \eta_{n+m}(\beta^{N}) \right) \right] + d \nu \cdot g(N) \cdot \frac{\tau^{2}}{\tau^{1/\kappa}-1} \cdot u\left( \eta_{n+m}(\beta^{N+1}) \right) \right)
\end{align*}
For the remainder of the proof, we will show that this product can be made less than $\frac{\varepsilon}{2}$ by choosing $m$ large enough. Since the first term $\to 0$ as $m \to \infty$, it is enough to show that the product of the last two terms is bounded from above by a constant.

By assumption on $u$, we have $-u'(x) < \dfrac{C_{1}}{(x-s_{0})^{2}}$ for some constant $C_{1}>0$. Integrating both sides gives $u(x) < \dfrac{C_{2}}{x-s_{0}}$ for another constant $C_{2}>0$. Letting $x(N) := \eta_{n+m}(\beta^{N+1})$ and $h(N) := \eta_{n+m}(\beta^{N})-\eta_{n+m}(\beta^{N+1})>0$, we have
\begin{align*}
    &\dfrac{1}{h(N)} \cdot \left( \left[ u\left( \eta_{n+m}(\beta^{N+1}) \right) - u\left( \eta_{n+m}(\beta^{N}) \right) \right] + d \nu \cdot g(N) \cdot \frac{\tau^{2}}{\tau^{1/\kappa}-1} \cdot u\left( \eta_{n+m}(\beta^{N+1}) \right) \right) \\
&= - \left[ u'(x(N)) + R_{1}(x(N)) \right] + \dfrac{x(N) \cdot g(N) \cdot \frac{\tau^{2}}{\tau^{1/\kappa}-1}}{h(N)} \cdot \dfrac{d \nu \cdot u(x(N))}{x(N)},
\end{align*}

where $R_{1}(\cdot)$ is the Taylor remainder term with $\lim_{N \to \infty} R_{1}(x(N)) = 0$. 

We now want to find a lower bound on $h(N)$. Recalling assumption \ref{G:l_i and r_i condition} on $G$, we have
\begin{equation*}
    h(N) = [N, 1, 1, \ldots] - [N+1, 1, 1, \ldots] = G_{N}^{-1}(\varphi) - G_{N+1}^{-1}(\varphi) = \delta_{G}(N) > 0.
\end{equation*}
Taking $D>0$ as in assumption \ref{G:l_i and r_i condition} and recalling the definition of $g$ from \ref{G:function g}, we have
\begin{equation*}
    \dfrac{x(N) \cdot g(N) \cdot \frac{\tau^{2}}{\tau^{1/\kappa}-1}}{h(N)} < \dfrac{\eta_{n+m}(\beta^{N+1})}{\delta_{G}(N)} \cdot \dfrac{r_{N}-\ell_{N+1}}{\ell_{N+1}} \cdot \frac{\tau^{2}}{\tau^{1/\kappa}-1} < D  \cdot \frac{\tau^{2}}{\tau^{1/\kappa}-1}.
\end{equation*}
Since $\lim_{N \to \infty} R_{1}(x(N)) = 0$, we have $|R_{1}(x(N))| < C_{3}$ for some $C_{3}>0$. So,
\begin{align*}
    &-\left[ u'(x(N)) + R_{1}(x(N)) \right] + \dfrac{x(N) \cdot g(N) \cdot \frac{\tau^{2}}{\tau^{1/\kappa}-1}}{h(N)} \cdot \dfrac{d \nu \cdot u(x(N))}{x(N)} \\
    &< \dfrac{C_{1}}{(x(N))^{2}} + C_{3} + \dfrac{D \cdot \frac{\tau^{2}}{\tau^{1/\kappa}-1} \cdot d \nu}{x(N)} \cdot \dfrac{C_{2}}{(x(N))} < \dfrac{C_{4}}{(x(N))^{2}} < \dfrac{C}{h(N)}
\end{align*}
for some $C_{4}, C>0$, where the last inequality holds true because
\begin{equation*}
    h(N) < r_{N}-\ell_{N+1} < \ell_{N+1} \cdot g(N) < \eta_{n+m}(\beta^{N+1}) \cdot g(N) = x(N) \cdot g(N) < (x(N))^{2} \cdot m_{g}.
\end{equation*}
Therefore we have shown that for all $N$,
\begin{equation*}
    \left[ u\left( \eta_{n+m}(\beta^{N+1}) \right) - u\left( \eta_{n+m}(\beta^{N}) \right) \right] + d \nu \cdot g(N) \cdot \frac{\tau^{2}}{\tau^{1/\kappa}-1} \cdot u\left( \eta_{n+m}(\beta^{N+1}) \right) < C
\end{equation*}
for some constant $C>0$, finishing the proof of the Lemma.
\end{proof}

Lemmas \ref{lem:Phi_minus_N_N+1_diff} and \ref{lem:Phi_1_diff} yield the following.

\begin{lem}\label{Phi_diff}
There exists some $m$, which can be made arbitrarily large, such that for any $N$,
\begin{equation*}
    \Phi(\beta^{N+1}) - \Phi(\beta^{N}) < \varepsilon.
\end{equation*}
\end{lem}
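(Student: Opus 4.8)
The plan is to deduce Lemma~\ref{Phi_diff} directly from Lemmas~\ref{lem:Phi_minus_N_N+1_diff} and \ref{lem:Phi_1_diff} via the splitting $\Phi = \Phi^{-} + \Phi^{1}$ introduced above, with no new computation. First I would fix $\varepsilon > 0$ and choose the position $m$ of the inserted digit so that three requirements hold at once: $m \geq m_{0}(\varepsilon)$, where $m_{0}$ is the constant furnished by Lemmas~\ref{lem:Phi_minus_N_1_diff}--\ref{lem:Phi_minus_N_N+1_diff}; $m$ satisfies the conclusion of Lemma~\ref{lem:Phi_1_diff}; and, crucially, $s(n+m) = 1$. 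The last condition is available because $s(\cdot) = 1$ infinitely often (which is exactly why Lemma~\ref{lem:Phi_1_diff} is stated under the hypothesis $s(n+m)=1$ and why its $m$ can be taken arbitrarily large), and it forces $\Phi^{1}(\beta^{N}) = (\eta_{1}(\beta^{N})\cdots\eta_{n+m-1}(\beta^{N}))^{\nu}\, u(\eta_{n+m}(\beta^{N})) \geq 0$ for every $N$, since $u>0$ and each $\eta_{i}>0$. Each $\beta^{N}$ has a symbolic expansion that is eventually all ones, so $\Phi(\beta^{N})$, $\Phi^{-}(\beta^{N})$ and $\Phi^{1}(\beta^{N})$ are all finite.

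With this $m$ fixed, I would write, for any $N$,
\[
\Phi(\beta^{N+1}) - \Phi(\beta^{N}) = \bigl(\Phi^{-}(\beta^{N+1}) - \Phi^{-}(\beta^{N})\bigr) + \bigl(\Phi^{1}(\beta^{N+1}) - \Phi^{1}(\beta^{N})\bigr).
\]
The second bracket is $<\varepsilon/2$ immediately by Lemma~\ref{lem:Phi_1_diff}. For the first bracket I would use $\Phi^{-}(\beta^{N+1}) = \Phi(\beta^{N+1}) - \Phi^{1}(\beta^{N+1})$ to rewrite it as $\bigl(\Phi(\beta^{N+1}) - \Phi^{-}(\beta^{N})\bigr) - \Phi^{1}(\beta^{N+1})$; Lemma~\ref{lem:Phi_minus_N_N+1_diff} bounds $\Phi(\beta^{N+1}) - \Phi^{-}(\beta^{N}) < \varepsilon/2$, while $\Phi^{1}(\beta^{N+1}) \geq 0$, so the first bracket is also $<\varepsilon/2$. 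Adding gives $\Phi(\beta^{N+1}) - \Phi(\beta^{N}) < \varepsilon$. (In fact the two occurrences of $\Phi^{1}(\beta^{N+1})$ cancel, giving the sharper $\Phi(\beta^{N+1}) - \Phi(\beta^{N}) < \varepsilon/2 - \Phi^{1}(\beta^{N}) \leq \varepsilon/2$, but only the weaker bound is needed.) Since $m$ may be taken arbitrarily large by Lemma~\ref{lem:Phi_1_diff}, so may the $m$ in Lemma~\ref{Phi_diff}, which is precisely what the proof of Lemma~\ref{lem:5.18} requires.

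I do not expect a real obstacle here: all of the analytic difficulty has already been absorbed — the telescoping control of $\Phi^{-}$ into Lemmas~\ref{lem:Phi_minus_N_1_diff}--\ref{lem:Phi_minus_N_N+1_diff}, and the delicate estimate on the single active term, the only place the fine geometric hypothesis \ref{G:l_i and r_i condition} on $G$ is invoked, into Lemma~\ref{lem:Phi_1_diff}. The one point to handle with care is the sign bookkeeping: the conclusion is a one-sided bound on $\Phi(\beta^{N+1}) - \Phi(\beta^{N})$, and it genuinely needs the active index $n+m$ to satisfy $s(n+m)=1$ so that $\Phi^{1}$ is nonnegative and the two ``$\varepsilon/2$ budgets'' are not overspent; this is also consistent with the downstream use of $\beta^{N}$, where one needs $\Phi(\beta^{N}) \to \infty$ as $N \to \infty$, which again forces $s(n+m) = 1$.
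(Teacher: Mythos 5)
Your proof uses the same decomposition $\Phi=\Phi^{-}+\Phi^{1}$ and the same two auxiliary estimates (Lemmas~\ref{lem:Phi_minus_N_N+1_diff} and~\ref{lem:Phi_1_diff}) as the paper, and you correctly identify the role of the constraint $s(n+m)=1$. However, you read Lemma~\ref{lem:Phi_minus_N_N+1_diff} literally as bounding $|\Phi^{-}(\beta^{N})-\Phi(\beta^{N+1})|$ and then insert the cancelling $\pm\Phi^{1}(\beta^{N+1})$ to route your argument through it. That literal reading cannot be right: since $\Phi(\beta^{N+1})-\Phi^{-}(\beta^{N})=(\Phi^{-}(\beta^{N+1})-\Phi^{-}(\beta^{N}))+\Phi^{1}(\beta^{N+1})$ and $\Phi^{1}(\beta^{N+1})\to\infty$ as $N\to\infty$ (this is exactly the content of Lemma~\ref{lem:Phi_limit_unbounded}, which the paper needs just after this point), the quantity $|\Phi^{-}(\beta^{N})-\Phi(\beta^{N+1})|$ is unbounded in $N$. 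The statement of Lemma~\ref{lem:Phi_minus_N_N+1_diff} evidently has a typo; its own proof (a triangle-inequality application of Lemma~\ref{lem:Phi_minus_N_1_diff}) yields $|\Phi^{-}(\beta^{N})-\Phi^{-}(\beta^{N+1})|<\varepsilon/2$, and that corrected form is what the paper's one-line proof of Lemma~\ref{Phi_diff} uses: $\Phi(\beta^{N+1})-\Phi(\beta^{N})=(\Phi^{-}(\beta^{N+1})-\Phi^{-}(\beta^{N}))+(\Phi^{1}(\beta^{N+1})-\Phi^{1}(\beta^{N}))<\varepsilon/2+\varepsilon/2=\varepsilon$. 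Once the typo is fixed, your extra algebraic maneuver becomes unnecessary, and your parenthetical ``sharper'' bound $<\varepsilon/2$ (which dispenses with Lemma~\ref{lem:Phi_1_diff}) collapses for the same reason the literal reading fails — it is too good to be true, since Lemma~\ref{lem:Phi_1_diff} is the only place the fine geometric hypothesis on $G$ is actually used, and cannot be sidestepped.
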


\begin{proof}
Using Lemmas \ref{lem:Phi_minus_N_N+1_diff} and \ref{lem:Phi_1_diff}, for sufficiently large $m$ with $s(n+m)=1$ we have
\begin{equation*}
    \Phi(\beta^{N+1}) - \Phi(\beta^{N}) \leq \Phi^{-}(\beta^{N+1}) - \Phi^{-}(\beta^{N}) + \Phi^{1}(\beta^{N+1}) - \Phi^{1}(\beta^{N}) < \frac{\varepsilon}{2} + \frac{\varepsilon}{2} = \varepsilon.
\end{equation*}
\end{proof}

To complete the proof of Lemma~\ref{lem:5.18}, we will need the following statement.

\begin{lem}\label{lem:Phi_limit_unbounded}
For any $m$ satisfying $s(n+m) = 1$, we have
\begin{equation*}
    \lim_{N \to \infty} \Phi(\beta^{N}) = \infty.
\end{equation*}
\end{lem}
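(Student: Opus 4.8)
The plan is to use the decomposition $\Phi(\beta^{N}) = \Phi^{-}(\beta^{N}) + \Phi^{1}(\beta^{N})$ introduced above: I will show that $\Phi^{1}(\beta^{N}) \to \infty$ as $N \to \infty$ while $|\Phi^{-}(\beta^{N})|$ stays bounded by a constant that does not depend on $N$, and conclude. Throughout, $n$ and $m$ are fixed (with $s(n+m) = 1$) and only $N$ varies.

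First I would record the combinatorics of $\beta^{N} = [a_{1}, \ldots, a_{n}, 1, \ldots, 1, N, 1, 1, \ldots]$, with the $N$ in position $n+m$. For $1 \le i \le n+m-1$ the leading digit of $\eta_{i}(\beta^{N})$ is one of $1, a_{1}, \ldots, a_{n}$, so $\eta_{i}(\beta^{N})$ lies in $J_{a_{i}}$ or in $J_{1}$; setting $d := \max\{1, a_{1}, \ldots, a_{n}\}$ (independent of $N$) and using $\ell_{1} > \ell_{2} > \cdots$, we get $\eta_{i}(\beta^{N}) \in (\ell_{d}, s_{1})$ for all such $i$ and all $N$. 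For $i \ge n+m+1$ we have $\eta_{i}(\beta^{N}) = [1,1,\ldots] = \varphi$. Finally $\eta_{n+m}(\beta^{N}) = [N, 1, 1, \ldots] \in J_{N} = (\ell_{N}, r_{N})$: the intervals $J_{i}$ being disjoint with strictly decreasing left endpoints forces $r_{N} \le \ell_{N-1}$, while $(\ell_{i})$ is decreasing and bounded below by $s_{0} = \inf S \le \ell_{i}$, hence converges to $s_{0}$; thus both $\ell_{N}, r_{N} \to s_{0}$ and therefore $\eta_{n+m}(\beta^{N}) \to s_{0}^{+}$ as $N \to \infty$. By assumption \ref{u:1} on $u$ this gives $u(\eta_{n+m}(\beta^{N})) \to \infty$. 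Since $s(n+m) = 1$ and each factor $\eta_{i}(\beta^{N}) > \ell_{d}$ for $i \le n+m-1$, we obtain
\begin{equation*}
    \Phi^{1}(\beta^{N}) = \bigl( \eta_{1}(\beta^{N}) \cdots \eta_{n+m-1}(\beta^{N}) \bigr)^{\nu} \cdot u(\eta_{n+m}(\beta^{N})) \ge \ell_{d}^{\nu(n+m-1)} \cdot u(\eta_{n+m}(\beta^{N})) \xrightarrow[N\to\infty]{} \infty,
\end{equation*}
with $c_{0} := \ell_{d}^{\nu(n+m-1)} > 0$ a constant independent of $N$.

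It remains to bound $|\Phi^{-}(\beta^{N})| = \bigl| \sum_{i \ne n+m} s(i) \, (\eta_{0}(\beta^{N}) \cdots \eta_{i-1}(\beta^{N}))^{\nu} \, u(\eta_{i}(\beta^{N})) \bigr|$ uniformly in $N$; this is the one place requiring care. Split the sum at $n+m$. For the head $i \le n+m-1$: each $\eta_{j}(\beta^{N}) < s_{1} \le 1$, so the product factor is $\le 1$, and since $\eta_{i}(\beta^{N}) \in (\ell_{d}, s_{1})$ with $u$ continuous and (integrating the one-sided bound \ref{u:3'} near $s_{1}$, as in Lemma~\ref{lem:u_bounded}) bounded above on $(\ell_{d}, s_{1})$ by some $B < \infty$, the head contributes at most $B(n+m-1)$. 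For the tail $i \ge n+m+1$: $u(\eta_{i}(\beta^{N})) = u(\varphi)$, and since the factors $\eta_{1}(\beta^{N}), \ldots, \eta_{n+m}(\beta^{N})$ are each $< 1$ while $\eta_{n+m+1}(\beta^{N}) = \cdots = \eta_{i-1}(\beta^{N}) = \varphi$, the product satisfies $(\eta_{0}(\beta^{N}) \cdots \eta_{i-1}(\beta^{N}))^{\nu} \le \varphi^{\nu(i-1-(n+m))}$; hence the tail contributes at most $u(\varphi) \sum_{j \ge 0} \varphi^{\nu j} = u(\varphi)/(1-\varphi^{\nu}) < \infty$ (using $\varphi < s_{1} \le 1$, which holds by \ref{G:decreasing}). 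Therefore $|\Phi^{-}(\beta^{N})| \le B^{*}$ for a constant $B^{*}$ independent of $N$, and
\begin{equation*}
    \Phi(\beta^{N}) \ge \Phi^{1}(\beta^{N}) - |\Phi^{-}(\beta^{N})| \ge c_{0} \, u(\eta_{n+m}(\beta^{N})) - B^{*} \xrightarrow[N\to\infty]{} \infty,
\end{equation*}
which is the claim.

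The only genuine obstacle is the uniform boundedness of $\Phi^{-}(\beta^{N})$, and inside it the sub-point that $u$ does not blow up as $x \to s_{1}^{-}$; this is handled by integrating assumption \ref{u:3'} exactly as in the proof of Lemma~\ref{lem:u_bounded}. Everything else is bookkeeping about which digit of $\beta^{N}$ sits in which position, together with the two elementary limits $\ell_{N} \to s_{0}$ and $r_{N} \to s_{0}$.
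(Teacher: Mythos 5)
Your proof is correct and follows the same overall strategy as the paper: split off the dominant term $\Phi^{1}(\beta^{N})$, show it diverges because $\eta_{n+m}(\beta^{N}) \to s_{0}^{+}$ and hence $u(\eta_{n+m}(\beta^{N})) \to \infty$ by \ref{u:1}, and bound the remaining head and tail uniformly in $N$. Where the paper obtains the lower bound on $\Phi^{1}(\beta^{N})$ by comparing with $\Phi^{1}(\beta^{1})$ via Lemma~\ref{lem:log_ratio_estimates}, you bound the product factor directly by $\ell_{d}^{\nu(n+m-1)}$ with $d = \max\{1, a_{1}, \ldots, a_{n}\}$, which is more elementary; and for the head you correctly localize $\eta_{i}(\beta^{N})$ in $(\ell_{d}, s_{1})$ rather than in $J_{1}$ (the paper's blanket claim $\eta_{i}(\beta^{N}) \in J_{1}$ is inaccurate when $i \le n$ and $a_{i} > 1$, though the intended bound goes through). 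Both proofs reduce the tail to a geometric series in $\varphi^{\nu}$, so the approaches are essentially the same.
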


\begin{proof}
We first prove that $\lim_{N \to \infty} \Phi^{1}(\beta^{N}) = \infty$. By Lemma~\ref{lem:log_ratio_estimates} \ref{lem:u_diff_bounded:2},
\begin{equation*}
    \left| \log \frac{ s(n+m) \cdot \left( \eta_{0}(\beta^{N}) \cdots \eta_{n+m-1}(\beta^{N}) \right)^{\nu} }{ s(n+m) \cdot  \left( \eta_{0}(\beta^{1}) \cdots \eta_{n+m-1}(\beta^{1}) \right)^{\nu} } \right| < \nu \cdot \sum_{j=1}^{n+m-1} m_{g} \cdot \sigma \cdot \tau^{1 + \frac{j+1-(n+m)}{\kappa}} < \dfrac{\nu \cdot m_{g} \cdot \sigma \cdot \tau^{3}}{\tau^{1/ \kappa} - 1}.
\end{equation*}
Hence
\begin{equation*}
    s(n+m) \cdot \left( \eta_{0}(\beta^{N}) \cdots \eta_{n+m-1}(\beta^{N}) \right)^{\nu} > \frac{1}{\exp \left( \frac{\nu m_{g} \sigma \tau^{3}}{\tau^{1/ \kappa} - 1} \right)} \cdot s(n+m) \cdot \left( \eta_{0}(\beta^{1}) \cdots \eta_{n+m-1}(\beta^{1}) \right)^{\nu}
\end{equation*}
and
\begin{equation*}
    \Phi^{1}(\beta^{N}) > \frac{1}{\exp \left( \frac{\nu m_{g} \sigma \tau^{3}}{\tau^{1/ \kappa} - 1} \right)} \cdot \dfrac{ u( \eta_{n+m}(\beta^{N}) ) }{ u( \eta_{n+m}(\beta^{1}) ) } \Phi^{1}(\beta^{1}).
\end{equation*}
Since $\lim_{x \to s_{0}} u(x) = \infty$, the latter expression tends to $\infty$ as $N \to \infty$.
Now, write
\begin{align*}
    \Phi(\beta^{N}) &= \sum_{i=1}^{n+m-1} s(i) \cdot \left( \eta_{0}(\beta^{N}) \cdots \eta_{i-1}(\beta^{N}) \right)^{\nu} \cdot u(\eta_{i}(\beta^{N})) + \Phi^{1}(\beta^{N}) \\
    &\text{\hspace{14px}}+ \sum_{i=n+m+1}^{\infty} s(i) \cdot \left( \eta_{0}(\beta^{N}) \cdots \eta_{i-1}(\beta^{N}) \right)^{\nu} \cdot u(\eta_{i}(\beta^{N})).
\end{align*}
For $i \in \{ 1, 2, \ldots, n+m-1 \}$ and any $N>0$ we have $\eta_{i}(\beta^{N}) \in J_{1}$, thus $\eta_{i}(\beta^{N}) > \ell_{1} > s_{0}$. So, using Lemma~\ref{lem:u_bounded} we can bound the first term above independently of $N$:
\begin{align*}
    \left| \sum_{i=1}^{n+m-1} s(i) \cdot \left( \eta_{0}(\beta^{N}) \cdots \eta_{i-1}(\beta^{N}) \right)^{\nu} \cdot u(\eta_{i}(\beta^{N})) \right|
    < \sum_{i=1}^{n+m-1} \left| \underbrace{\left( 1 \cdots 1 \right)^{\nu}}_{\text{$i-1$ times}} \cdot \sup_{x \in [\ell_{1}, 1)}u(x) \right| < \infty.
\end{align*}
The third term converges, since the sum converges absolutely:
\begin{equation*}
    \sum_{i=n+m+1}^{\infty} \left| s(i) \cdot \left( \eta_{0}(\beta^{N}) \cdots \eta_{i-1}(\beta^{N}) \right)^{\nu} \cdot u(\eta_{i}(\beta^{N})) \right| = u(\beta^{1}) \cdot \sum_{i=n+1}^{\infty} \left( (\beta^{1})^{\nu} \right)^{i} < \infty
\end{equation*}
because $(\beta^{1})^{\nu} < s_{1} \leq 1$ for $\nu > 0$. This bound on the third term does not depend on $N$.
Thus the first and third term are bounded independently of $N$ while the second term is $\Phi^{1}(\beta^{N}) \to \infty$ as $N \to \infty$, so $\Phi(\beta^{N}) \to \infty$ as needed.
\end{proof}

We are now ready to prove Lemma~\ref{lem:5.18}.

\begin{proof}[Proof of Lemma~\ref{lem:5.18}]
Choose $m$ as provided by Lemma~\ref{lem:Phi_1_diff}. Increase $N$ by one at a time starting with $N=1$. We know that $\Phi(\beta^{1}) = \Phi(\omega) < \Phi(\omega) + \varepsilon$ and, by Lemma~\ref{lem:Phi_limit_unbounded}, there exists an $M$ with $\Phi(\beta^{M}) > \Phi(\omega) + \varepsilon$. Let $N$ be the smallest such $M$. Then $\Phi(\beta^{N-1}) \leq \Phi(\omega) + \varepsilon$, and by Lemma~\ref{lem:Phi_1_diff},
\begin{equation*}
    \Phi(\beta^{N}) < \Phi(\beta^{N-1}) + \varepsilon \leq \Phi(\omega) + 2\varepsilon.
\end{equation*}
Hence
\begin{equation*}
    \Phi(\omega) + \varepsilon < \Phi(\beta^{N}) < \Phi(\omega) + 2\varepsilon.
\end{equation*}
Choosing $\beta = \beta^{N}$ completes the proof.
\end{proof}

\bibliographystyle{amsalpha}
\bibliography{ref}
\end{document}